\newcommand{\erase}[1]{}
\newtheorem{theorem}{Theorem}[section]
\newtheorem{lemma}[theorem]{Lemma}
\newtheorem{proposition}[theorem]{Proposition}
\newtheorem{corollary}[theorem]{Corollary}
\newtheorem{_definition}[theorem]{Definition}
\newenvironment{definition}{\begin{_definition}\rm}{\end{_definition}}
\newtheorem{_remark}[theorem]{\it Remark}
\newenvironment{remark}{\begin{_remark}\rm}{\end{_remark}}
\newtheorem{_example}[theorem]{Example}
\newenvironment{example}{\begin{_example}\rm}{\end{_example}}
\numberwithin{equation}{section}
\numberwithin{table}{section}
\newcommand{\E}{\mathbb{E}}
\newcommand{\F}{\mathord{\mathbb F}}
\renewcommand{\P}{\mathord{\mathbb  P}}
\newcommand{\Q}{\mathord{\mathbb  Q}}
\newcommand{\R}{\mathord{\mathbb R}}
\newcommand{\Z}{\mathord{\mathbb Z}}
\newcommand{\DDD}{\mathord{\mathcal D}}
\newcommand{\EEE}{\mathord{\mathcal E}}
\newcommand{\FFF}{\mathord{\mathcal F}}
\newcommand{\GGG}{\mathord{\mathcal G}}
\newcommand{\HHH}{\mathord{\mathcal H}}
\newcommand{\MMM}{\mathord{\mathcal M}}
\newcommand{\PPP}{\mathord{\mathcal P}}
\newcommand{\RRR}{\mathord{\mathcal R}}
\newcommand{\inj}{\hookrightarrow}
\newcommand{\surj}{\mathbin{\to \hskip -7pt \to}}
\newcommand{\isom}{\mathbin{\,\raise -.6pt\rlap{$\to$}\raise 3.5pt \hbox{\hskip .3pt$\mathord{\sim}$}\,}}
\newcommand{\set}[2]{\{\; {#1} \; \mid \; {#2} \;  \}}
\newcommand{\shortset}[2]{\{ {#1} \,|\, {#2}   \}}
\newcommand{\gen}[1]{\langle {#1}  \rangle}
\newcommand{\tensor}{\otimes}
\newcommand{\sprime}{\sp\prime}
\newcommand{\spcirc}{\sp{\mathord{\circ}}}
\newcommand{\sptimes}{\sp{\times}}
\newcommand{\sperp}{\sp{\perp}}
\newcommand{\dual}{\sp{\vee}}
\newcommand{\fiberproduct}{\Box}
\newcommand{\inv}{\sp{-1}}
\newcommand{\PGU}{\mathord{\mathrm {PGU}}}
\newcommand{\PGL}{\mathord{\mathrm {PGL}}}
\newcommand{\OG}{\mathord{\mathrm {O}}}
\newcommand{\id}{\mathord{\mathrm {id}}}
\newcommand{\Ker}{\operatorname{\mathrm {Ker}}\nolimits}
\newcommand{\Aut}{\operatorname{\mathrm {Aut}}\nolimits}
\newcommand{\Gal}{\operatorname{\mathrm {Gal}}\nolimits}
\newcommand{\Grass}{\mathord{\mathrm {Grass}}}
\newcommand{\rmand}{\textrm{and}}
\newcommand{\quand}{\quad\rmand\quad}
\newcommand{\typeI}{{\rm I}}
\newcommand{\intM}[2]{\langle #1\rangle_{#2}}
\newcommand{\NE}{\mathord{{\rm Nef}}}
\newcommand{\DR}{\mathord{{\rm DR}}}
\newcommand{\aut}{\mathord{{\it Aut}}}
\newcommand{\Xps}{X_{p, \sigma}}
\newcommand{\Xpss}{X_{p, \sigma\sprime}}
\newcommand{\Sps}{S_{p, \sigma}}
\newcommand{\Spss}{S_{p, \sigma\sprime}}
\newcommand{\sphyp}{^*}
\newcommand{\Tiota}{T_{\iota}}
\newcommand{\switch}{\mathord{\mathrm{sw}}}
\newcommand{\Frob}{\mathord{\mathrm{Fr}}}
\newcommand{\Cremona}{\mathord{\mathrm{Cr}}}
\newcommand{\FQ}{\mathord{\mathrm{FQ}}}
\newcommand{\Lts}{L_{26}}
\newcommand{\Xpone}{X_{p, 1}}
\newcommand{\Xpten}{X_{p, 10}}
\newcommand{\Xtwoone}{X_{2, 1}}
\newcommand{\Xtwoten}{X_{2, 10}}
\newcommand{\Xthreeone}{X_{3, 1}}
\newcommand{\Xthreeten}{X_{3, 10}}
\newcommand{\Stwoone}{S_{2, 1}}
\newcommand{\Stwoten}{S_{2, 10}}
\newcommand{\Sthreeone}{S_{3, 1}}
\newcommand{\Sthreeten}{S_{3, 10}}
\newcommand{\POG}{\mathord{\mathrm{ PO}}}
\newcommand{\Isot}{\mathord{\mathrm{Isot}}}
\newcommand{\Gen}{\mathord{\mathrm{Gen}}}
\begin{document}

\title [Supersingular K3 surfaces] {On certain duality of  N\'eron-Severi lattices of 
 supersingular $K3$ surfaces}

\author{Shigeyuki Kond\=o}
\email{kondo@math.nagoya-u.ac.jp}
\address{Graduate School of Mathematics, Nagoya University, Nagoya,
464-8602, JAPAN}
\thanks{The first author was partially supported by
JSPS Grant-in-Aid  for Scientific Research (S)
  No.22224001.}

\author{Ichiro Shimada}
\email{shimada@math.sci.hiroshima-u.ac.jp}
\address{Department of Mathematics, 
Graduate School of Science, 
Hiroshima University, 
Higashi-Hiroshima, 
739-8526,  JAPAN}
\thanks{The second author was partially supported by JSPS Grants-in-Aid for Scientific Research (B) No.20340002.}

\subjclass[2000]{14J28, 14G17}

\begin{abstract}
Let $X$ and $Y$ be  supersingular $K3$ surfaces defined over an algebraically closed field.
Suppose that the sum of their Artin invariants is $11$.
Then there exists a certain duality between their N\'eron-Severi lattices.
We investigate geometric consequences of this duality.
As an application, 
we classify  genus one fibrations on supersingular $K3$ surfaces with Artin invariant $10$
in characteristic $2$ and $3$, and 
give a set of generators of the automorphism group 
of the nef cone of these supersingular $K3$ surfaces.
The difference between 
the automorphism group of a  supersingular $K3$ surface $X$  and 
the automorphism group of its nef cone 
is determined by the period of $X$.
We define the notion of genericity for supersingular $K3$ surfaces 
in terms of the period,
and prove the existence of generic supersingular $K3$ surfaces
in odd characteristics 
for each Artin invariant larger than $1$.
\end{abstract}
\maketitle

\section{Introduction}
A $K3$ surface $X$ defined over an algebraically closed field $k$ is said to be
\emph{supersingular} (in the sense of Shioda) if the rank of its N\'eron-Severi lattice $S_X$ is $22$.
Supersingular $K3$ surfaces exist only when the base field $k$ is of positive characteristic.
Let $X$ be a supersingular $K3$ surface in characteristic $p>0$.
Artin~\cite{MR0371899} proved that 
the discriminant group of  $S_X$ 
is a $p$-elementary abelian group of rank $2\sigma$, 
where $\sigma$ is a positive  integer such that $\sigma\le 10$.
This integer $\sigma$ is called the \emph{Artin invariant} of $X$.
The isomorphism class of the lattice $S_X$ depends only on $p$ and $\sigma$
(Rudakov and Shafarevich~\cite{MR633161}).  
Moreover
supersingular $K3$ surfaces with Artin invariant $\sigma$ form a $(\sigma -1)$-dimensional family,
and a supersingular $K3$ surface with Artin invariant $1$ 
in characteristic $p$ 
is unique up to isomorphisms
(Ogus~\cite{MR563467}, \cite{MR717616}, Rudakov and Shafarevich~\cite{MR633161}).
\par
\medskip
Recently many studies of supersingular $K3$ surfaces in small characteristics with  Artin invariant $1$
have appeared.
For example, for $p=2$, 
Dolgachev and  Kondo~\cite{MR1935564},
Katsura and Kondo \cite{KatsuraKondochar2},
Elkies and Sch{\"u}tt~\cite{ElkiesSchuett};
for $p=3$, 
%
Katsura and Kondo \cite{MR2862188},
Kondo and Shimada \cite{KondoShimada},
Sengupta~\cite{Sengupta};
and for $p=5$,
Shimada~\cite{shimadapreprintchar5}.
On the other hand, 
geometric properties of supersingular $K3$ surfaces with big Artin invariant are not so much known
(e.g. Rudakov and Shafarevich~\cite{MR508830}, \cite{MR633161}, Shioda~\cite{MR918849}, 
Shimada~\cite{MR2129248}, \cite{MR2036331}).  
\par
\medskip
In this paper, we 
present some methods to investigate  supersingular $K3$ surfaces 
with big Artin invariant  by means of the following simple observation.
Let $\Xps$  be  a supersingular $K3$ surface in characteristic $p$ with Artin invariant $\sigma$,
and let $\Sps$ denote its N\'eron-Severi lattice.
\begin{lemma}\label{lem:main}
Suppose that $\sigma+\sigma\sprime=11$.
Then $\Spss$ is isomorphic to $\Sps\dual (p)$,
where $\Sps\dual (p)$ is the lattice obtained from the dual lattice $\Sps\dual$ of $\Sps$
by multiplying the symmetric bilinear form with $p$.
\end{lemma}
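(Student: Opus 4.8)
The plan is to pin down $\Sps\dual(p)$ by its lattice invariants and quote the uniqueness part of the Rudakov--Shafarevich theorem. Recall from \cite{MR633161} that, for $1\le\tau\le 10$, the lattice $S_{p,\tau}$ is characterized up to isometry as the unique even lattice of signature $(1,21)$ whose discriminant group is $p$-elementary of length $2\tau$ (together with, when $p=2$, the condition that its discriminant quadratic form is of even type, which is part of their explicit description of the lattice). Since $\sigma\sprime=11-\sigma$ again lies in $\{1,\dots,10\}$, it suffices to verify that $\Sps\dual(p)$ is an even lattice of signature $(1,21)$ whose discriminant group is $p$-elementary of length $2\sigma\sprime$, and of even type when $p=2$.

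Write $\langle\,,\rangle$ for the form on $\Sps$, and work inside $\Sps\otimes\Q$. Since $\Sps$ is $p$-elementary we have $p\,\Sps\dual\subseteq\Sps$, giving a chain $\Sps\subseteq\Sps\dual\subseteq\tfrac1p\Sps$; comparing it with $[\tfrac1p\Sps:\Sps]=p^{22}$ and $[\Sps\dual:\Sps]=p^{2\sigma}$ yields $[\tfrac1p\Sps:\Sps\dual]=p^{2\sigma\sprime}$. Next, a direct computation identifies the dual of $\Sps\dual(p)$: for $v\in\Sps\otimes\Q$ one has $p\langle v,\Sps\dual\rangle\subseteq\Z$ if and only if $pv\in(\Sps\dual)\dual=\Sps$, hence $(\Sps\dual(p))\dual=\tfrac1p\Sps$. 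Therefore the discriminant group of $\Sps\dual(p)$ is $\tfrac1p\Sps/\Sps\dual$; it is annihilated by $p$ because $p\cdot\tfrac1p\Sps=\Sps\subseteq\Sps\dual$, and it has order $p^{2\sigma\sprime}$, so it is $p$-elementary of length $2\sigma\sprime$. Finally, multiplying the form by $p>0$ does not change the signature, so $\Sps\dual(p)$ still has signature $(1,21)$.

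It remains to check evenness. Let $x\in\Sps\dual$ and put $m:=p\langle x,x\rangle$. Then $m\in\Z$ (since $x\in\Sps\dual$ and $px\in\Sps$), and $pm=\langle px,px\rangle\in 2\Z$ because $\Sps$ is even and $px\in\Sps$. When $p$ is odd this forces $m\in 2\Z$, i.e. the self-intersection $\langle x,x\rangle_{\Sps\dual(p)}=m$ is even, so $\Sps\dual(p)$ is even. When $p=2$, the assumption that the discriminant form of $\Sps$ is of even type says exactly that $\langle x,x\rangle\in\Z$ for all $x\in\Sps\dual$, whence $\langle x,x\rangle_{\Sps\dual(2)}=2\langle x,x\rangle\in 2\Z$; and the discriminant form of $\Sps\dual(2)$, evaluated on the class of $\tfrac12 z$ with $z\in\Sps$, equals $\tfrac12\langle z,z\rangle\bmod 2\Z\in\Z/2\Z$ because $\Sps$ is even, so it too is of even type. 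Combining this with the previous paragraph and the Rudakov--Shafarevich characterization gives $\Sps\dual(p)\cong\Spss$.

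The step I expect to carry the real weight is the uniqueness input in the first paragraph. For $p$ odd one must know that the signature $(1,21)$ together with the length $2\sigma\sprime$ of the $p$-elementary discriminant group already determines the discriminant quadratic form: this follows from Milgram's formula, since the two inequivalent nondegenerate quadratic forms of a given even rank over $\F_p$ have Gauss sums differing by a sign, so the congruence class of $1-21\equiv 4\pmod 8$ singles out one of them. For $p=2$ one must invoke the fact, contained in the description of $S_X$ in \cite{MR633161}, that the discriminant form of a supersingular $K3$ lattice is always of even type. Granting these, everything above is formal.
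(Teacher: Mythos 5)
Your proof is correct and follows essentially the same route as the paper: both reduce the claim to the Rudakov--Shafarevich characterization of $\Sps$ and verify that $\Sps\dual(p)$ is an even hyperbolic $p$-elementary lattice of rank $22$ with discriminant group of order $p^{2\sigma\sprime}$, of type $\typeI$ when $p=2$, with identical evenness and type-$\typeI$ arguments. The only differences are cosmetic: you compute the discriminant via indices of the chain $\Sps\subseteq\Sps\dual\subseteq\frac1p\Sps$ where the paper uses Gram-matrix determinants, and your closing Milgram-formula remark is unnecessary given the form of the uniqueness statement in \cite{MR633161} (Theorem~\ref{thm:characterizationofSps}), which is stated directly in terms of the discriminant $-p^{2\sigma}$ rather than the discriminant quadratic form.
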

Lemma~\ref{lem:main} is proved in  Section~\ref{sec:NSssK3}.
We use this duality between $\Sps$ and $\Spss$
in the study of   genus one fibrations 
and the automorphism groups of  supersingular $K3$ surfaces.
\par
\medskip
First, we apply Lemma~\ref{lem:main} to the classification of 
 genus one fibrations. 
 Note that the N\'eron-Severi lattice $S_Y$ of a $K3$ surface $Y$ is a hyperbolic lattice.
 The orthogonal group $\OG(S_Y)$ of $S_Y$ contains the stabilizer subgroup $\OG^+(S_Y)$
 of a positive cone of $S_Y\tensor \R$ as a subgroup of index $2$.
\begin{definition}
Let $Y$ be a $K3$ surface, and 
let $\phi: Y\to \P^1$ be a  genus one fibration.
We denote by $f_{\phi}\in S_Y$ the class of a fiber of $\phi$.
Let $\psi: Y\to \P^1$ be another genus one fibration on $Y$.
We say that $\phi$ and $\psi$ are \emph{$\Aut$-equivalent}
if there exist $g\in \Aut (Y)$ and $\bar{g}\in \Aut(\P^1)$
such that $\phi\circ g=\bar{g}\circ \psi$ holds,
while we say that $\phi$ and $\psi$ are \emph{lattice equivalent}
if there exists $g\in \OG^+ (S_Y)$ such that $f_{\phi}^g=f_{\psi}$.
We denote by $\E(Y)$ the set of lattice equivalence classes of genus one fibrations on $Y$,
and by $[\phi]\in \E(Y)$ the lattice equivalence class containing $\phi$.
\end{definition}
Many combinatorial properties of a  genus one fibration $\phi: Y\to \P^1$ 
depend only on 
the lattice equivalence class $[\phi]$.
See Proposition~\ref{prop:depends_only_on_latequiv}.
Moreover,
when $\sigma=10$,
the classification of genus one fibrations by $\Aut$-equivalence seems to be  too fine
(see Remark~\ref{rem:heavy}).
Therefore,
we concentrate upon the study of  lattice equivalence classes.
\par
\medskip
Using Lemma~\ref{lem:main}, we prove the following:
\begin{theorem}\label{thm:genusone}
Suppose that  $\sigma+\sigma\sprime=11$.
Then there exists  a canonical one-to-one 
correspondence  
$[\phi]\mapsto [\phi\sprime]$
between $\E(X_{p, \sigma})$ and $\E(X_{p, \sigma\sprime})$.
\end{theorem}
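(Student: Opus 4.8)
The plan is to translate the geometric statement about genus one fibrations into a purely lattice-theoretic one, carry out the bijection at the level of lattices using Lemma~\ref{lem:main}, and then descend back to the geometric side. First I would recall the standard characterization: a lattice equivalence class of genus one fibrations on a $K3$ surface $Y$ is the same as an $\OG^+(S_Y)$-orbit of primitive isotropic vectors $f \in S_Y$ lying in the closure of the positive cone, subject to the effectivity constraint coming from the nef cone (after possibly applying a reflection in a $(-2)$-class one may assume $f$ is nef, and then $|f|$ defines the fibration). So $\E(Y)$ is canonically identified with the set of $\OG^+(S_Y)$-orbits of primitive isotropic $f$ with $\langle f, h\rangle > 0$ for an ample class $h$; since by Riemann--Roch every primitive isotropic vector in the closure of a positive cone is, up to sign and up to $\OG^+$, the class of an elliptic fibration on \emph{some} marking, the combinatorial count $|\E(Y)|$ and the orbit structure depend only on the abstract lattice $S_Y$. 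The key point to record (citing Proposition~\ref{prop:depends_only_on_latequiv}) is that this correspondence is natural in the lattice.

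Next I would exploit Lemma~\ref{lem:main}: when $\sigma + \sigma' = 11$, we have $\Spss \cong \Sps\dual(p)$. The map $v \mapsto v$ (viewing $\Sps$ inside $\Sps \tensor \Q = \Sps\dual \tensor \Q$) followed by multiplication of the form by $p$ gives a canonical isometry of $\Q$-vector spaces $\Sps \tensor \Q \isom \Spss \tensor \Q$, hence an isomorphism $\POG(\Sps) \cong \POG(\Spss)$ and likewise for the $+$-subgroups once we fix compatible positive cones. An isotropic vector stays isotropic under scaling of the form, so primitive isotropic vectors of $\Sps$ correspond to primitive isotropic vectors of $\Sps\dual(p) \cong \Spss$ up to the (finite, by bounded denominators) rescaling needed to reach a primitive integral vector; the crucial fact is that this correspondence is canonical, i.e.\ independent of the chosen isometry, because any two choices differ by an element of $\OG^+$. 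This yields a canonical bijection between $\OG^+(\Sps)$-orbits of primitive isotropic vectors in the closure of the positive cone and the corresponding orbits for $\Spss$, hence a canonical bijection $\E(\Xps) \leftrightarrow \E(\Xpss)$ via the identifications of the previous paragraph. One must check that the positive cone and the effectivity/nef condition are preserved; since rescaling the form by the positive constant $p$ does not move the positive cone and sends nef classes to nef classes, this is immediate.

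The main obstacle is the passage between \emph{all} primitive isotropic vectors in the closure of the positive cone and those that are actually represented by a genus one fibration — that is, controlling the chamber structure (the nef cone) rather than just the positive cone. On a $K3$ surface the nef cone is cut out by the $(-2)$-curves, and $\OG^+(S_Y)$ acts transitively on the set of such chambers only up to the action of the Weyl group generated by $(-2)$-reflections; so I must verify that every $\OG^+(\Sps)$-orbit of a primitive isotropic $f$ contains a representative that is nef (and conversely that nefness is the only obstruction), which is exactly the content of Proposition~\ref{prop:depends_only_on_latequiv} and the standard fact that on a $K3$ surface a primitive nef isotropic class is the fiber class of a genus one fibration. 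Granting that, the bijection is forced, and its canonicity follows from the canonicity of the isometry $\Sps\tensor\Q \isom \Spss\tensor\Q$ up to $\OG^+$. I would close by remarking that this bijection is in fact equivariant for the isomorphism $\POG^+(\Sps) \cong \POG^+(\Spss)$ induced by Lemma~\ref{lem:main}, which is what makes it ``canonical'' in the precise sense used in the statement.
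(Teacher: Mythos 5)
Your proposal is correct and follows essentially the same route as the paper: identify $\E(Y)$ with the set of $\OG^+(S_Y)$-orbits of primitive isotropic vectors in $\overline{\PPP}(Y)$ (equivalently, $\OG(S_Y)$-orbits of isotropic lines in $S_Y\tensor\Q$, using Propositions~\ref{prop:nef} and~\ref{prop:vfphi}), and then transfer via the embedding $j:\Sps\inj\Spss$ of Corollary~\ref{cor:j}, which induces a bijection on isotropic lines and an isomorphism of orthogonal groups and is unique up to $\OG(\Spss)$, whence the canonicity. The only cosmetic difference is that you phrase the transfer as a $\Q$-linear isometry $\Sps\tensor\Q\isom\Spss\tensor\Q$ rather than as the integral embedding $j$, but these are the same datum.
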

We say that a  genus one fibration 
is \emph{Jacobian} if it admits a section.
\begin{theorem}\label{thm:nosections1}
Suppose that a genus one fibration $\phi:\Xps\to\P^1$ is a Jacobian fibration,
and let $\phi\sprime :\Xpss\to\P^1$ be a genus one fibration on $\Xpss$ with $\sigma\sprime=11-\sigma$
such that $[\phi\sprime]\in \E(\Xpss)$ corresponds to $[\phi]\in \E(\Xps)$ by Theorem~\ref{thm:genusone}.
Then $\phi\sprime$ does not admit a section.
\end{theorem}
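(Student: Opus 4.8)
The plan is to translate the existence of a section into a lattice-theoretic statement about the discriminant form, and then to show that the duality $\Spss\cong\Sps\dual(p)$ exchanges the property "has a section" with the property "does not have a section". Recall the standard fact: a genus one fibration $\phi:Y\to\P^1$ with fiber class $f_\phi$ admits a section if and only if there is a class $s\in S_Y$ with $\intM{s}{2}=-2$ and $\intM{s}{f_\phi}=1$; more intrinsically, writing $U_\phi$ for the sublattice spanned by $f_\phi$ and (when a section exists) the section, the fibration is Jacobian precisely when $f_\phi$ is part of a copy of the hyperbolic plane $U$ split off $S_Y$, equivalently when the primitive closure of $\gen{f_\phi}$ in $S_Y$ contains a class pairing to $1$ with $f_\phi$. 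In general one knows (Nishiyama, or the theory of the Jacobian fibration) that the minimal positive value of $\intM{s}{f_\phi}$ as $s$ ranges over $S_Y$ is the order of a certain element in the discriminant group, and $\phi$ is Jacobian iff this value is $1$. So the first step is to record the precise statement: $\phi$ admits a section $\iff$ the image of $f_\phi$ in $S_Y/\Z f_\phi$, paired against $S_Y$, surjects onto $\Z$; equivalently the "index" $m(\phi) := [\Z : \intM{f_\phi}{S_Y}]$... no — rather, $f_\phi$ being primitive, $\phi$ is Jacobian iff the homomorphism $\intM{f_\phi}{-}:S_Y\to\Z$ is surjective.

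The second step is to understand what the correspondence $[\phi]\mapsto[\phi\sprime]$ of Theorem~\ref{thm:genusone} does to fiber classes. From the construction behind Lemma~\ref{lem:main} and Theorem~\ref{thm:genusone}, the isomorphism $\Spss\isom\Sps\dual(p)$ sends the fiber class $f_{\phi\sprime}\in\Spss$ to (a generator of) the line corresponding to $f_\phi$, but now viewed inside $\Sps\dual(p)$; concretely, $f_\phi\in\Sps$ being a primitive isotropic vector, its image in $\Sps\dual$ is a primitive isotropic vector $\bar f$ of $\Sps\dual$, and after rescaling by $p$ this becomes $f_{\phi\sprime}$. The key numerical point is then the following: the map $\intM{f_{\phi\sprime}}{-}:\Spss\to\Z$ has image $p\cdot\intM{\bar f}{\Sps\dual}$, while surjectivity of $\intM{f_\phi}{-}:\Sps\to\Z$ (the Jacobian hypothesis) forces $\intM{\bar f}{\Sps\dual}$ to equal $\tfrac1p\Z$ or $\Z$ — one has to chase exactly which — and in either case $p\cdot\intM{\bar f}{\Sps\dual}$ is $\Z$ or $p\Z$ respectively. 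So I would compute: if $\phi$ is Jacobian, split $\Sps = U\oplus L$ with $f_\phi$ a basis vector of $U$; then $\Sps\dual = U\oplus L\dual$ with the same $f_\phi$, and $\intM{f_\phi}{\Sps\dual}=\Z$; hence $f_{\phi\sprime}$, which is $p$ times this primitive vector inside $\Sps\dual(p)$ — wait, one must be careful about primitivity after the twist. The honest statement: in $\Sps\dual(p)$ the vector $f_\phi$ (the old fiber class) is still primitive and isotropic, and $\intM{f_\phi}{\Sps\dual(p)} = p\,\Z$. Therefore $\intM{f_{\phi\sprime}}{-}$ is not surjective onto $\Z$, so $\phi\sprime$ admits no section. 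This is the whole argument, once the identification of $f_{\phi\sprime}$ is pinned down.

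Concretely, the steps in order: (1) State and cite the lattice criterion: a genus one fibration on a $K3$ surface is Jacobian iff the linear functional given by intersection with its primitive isotropic fiber class is surjective onto $\Z$; note this depends only on the lattice equivalence class, so it makes sense to speak of $[\phi]$ being Jacobian. (2) Unwind Theorem~\ref{thm:genusone}: show that under the isometry $\iota:\Spss\isom\Sps\dual(p)$, the fiber class $f_{\phi\sprime}$ maps to the primitive isotropic vector in $\Sps\dual(p)$ proportional to $f_\phi$ — I expect the construction to literally send $[\phi']$ to the class of the ray $\R_{\ge0}f_\phi$, so this is essentially by definition, but it needs to be stated cleanly. (3) Compute $\intM{\iota(f_{\phi\sprime})}{\Sps\dual(p)}$: split off $U$ using the Jacobian hypothesis on $\phi$, read off that $\intM{f_\phi}{\Sps\dual}=\Z$, hence after the $p$-twist $\intM{f_\phi}{\Sps\dual(p)}=p\Z\subsetneq\Z$. (4) Conclude by (1) that $\phi\sprime$ is not Jacobian. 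The main obstacle is step (2): making precise exactly which vector in $\Sps\dual(p)$ is the image of $f_{\phi\sprime}$, including getting the scaling and primitivity right, since a priori the correspondence of Theorem~\ref{thm:genusone} is stated only at the level of lattice equivalence classes and one must check it is compatible with the obvious identification of isotropic rays under $\iota$. Once that compatibility is in hand — and it should follow from how Theorem~\ref{thm:genusone} is set up in the earlier section — the numerical conclusion is immediate from the appearance of the factor $p$ in the twist $\Sps\dual(p)$.
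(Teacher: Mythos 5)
Your proposal is correct and follows essentially the same route as the paper: split $\Sps=U_\phi\oplus K_\phi$ using the section, observe that in $\Sps\dual(p)\cong U_\phi(p)\oplus K_\phi\dual(p)$ the class $f_\phi$ stays primitive (so it equals $f_{\phi\sprime}$ after choosing the embedding $j$ suitably, which is exactly Remark~\ref{rem:jfphi}) and pairs with everything in multiples of $p$, hence no class meets $f_{\phi\sprime}$ in $1$. The compatibility you flag as the main obstacle is precisely what the paper settles by arranging $j(f_\phi)$ to be a positive multiple of $f_{\phi\sprime}$ and then invoking primitivity, as in your step (2).
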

Elkies and Sch\"utt~\cite{ElkiesSchuett} proved the following: 
\begin{theorem}[\cite{ElkiesSchuett}]\label{thm:ESsection}
Any genus one fibration on $\Xpone$ admits a section.
\end{theorem}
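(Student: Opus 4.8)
The plan is to reduce the statement to the classical fact that there is no even unimodular lattice of signature $(1,21)$, by controlling the divisibility of the fibre class inside the dual lattice of $\Spone$. The first step is the standard lattice criterion for a section. Let $\phi:\Xpone\to\P^1$ be a genus one fibration and let $f\in\Spone$ be the class of a general fibre; since a genus one fibration on a $K3$ surface has no multiple fibres (in any characteristic), $f$ is primitive in $\Spone$. I claim $\phi$ admits a section if and only if there is a class $D\in\Spone$ with $D\cdot f=1$. The ``only if'' is trivial. For the ``if'', given such a $D$ one replaces it by $D+nf$ with $n\gg 0$ so that $(D+nf)^2\ge 0$; by Riemann--Roch on the $K3$ surface together with nefness of $f$, the class $D+nf$ is effective, and since $f$ is nef with $(D+nf)\cdot f=1$, the effective divisor $D+nf$ has exactly one irreducible component $C$ with $C\cdot f=1$ (the others being contracted by $\phi$). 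Such a $C$ is a smooth rational curve — a node or cusp on $C$ would force $C\cdot f\ge 2$ — mapping isomorphically onto $\P^1$, hence a section. (Read in the contrapositive, this is essentially the input behind Theorem~\ref{thm:nosections1}.)

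It therefore suffices to prove $\{\,D\cdot f : D\in\Spone\,\}=\Z$. This subgroup of $\Z$ equals $m_f\Z$, where $m_f\ge 1$ is the \emph{content} of the linear form $D\mapsto D\cdot f$ on $\Spone$, equivalently the content of $f$ regarded as an element of the dual lattice $\Spone\dual$. Because $\Spone\dual/\Spone\cong(\Z/p\Z)^2$ is $p$-elementary and $f$ is primitive in $\Spone$, an elementary divisibility argument — using crucially that $p$ is prime — shows $m_f\in\{1,p\}$: if $m_f>1$ then $f'=f/p$ lies in $\Spone\dual\setminus\Spone$, and the relation $f=p f'$ already forces $m_f=p$.

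Suppose for contradiction that $m_f=p$. Then $f'=f/p\in\Spone\dual\setminus\Spone$, with $(f')^2=f^2/p^2=0$ and $f'\cdot\Spone\subseteq\Z$. Form the overlattice
\[
  L\;:=\;\Spone+\Z f'\;\subseteq\;\Spone\dual .
\]
For $v\in\Spone$ and $a\in\Z$ one has $(v+af')^2=v^2+2a(v\cdot f')\in 2\Z$, so $L$ is an even lattice; it contains $\Spone$ with index $p$, hence has the same signature $(1,21)$, and $|{\disc L}|=|{\disc\Spone}|/p^2=p^2/p^2=1$. Thus $L$ would be an even unimodular lattice of signature $(1,21)$, which is impossible: an even unimodular lattice of signature $(r,s)$ exists only when $r\equiv s\pmod 8$, and $1-21=-20\not\equiv 0\pmod 8$. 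Hence $m_f=1$, and by the first step $\phi$ admits a section.

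Two remarks on the shape of this argument. First, it is uniform in $p$ and uses nothing special about characteristic $2$ or $3$; it also applies verbatim to quasi-elliptic fibrations, the notion of ``section'' and the intersection bookkeeping being unchanged. Second, the only genuinely delicate point is the first step: one must be careful that the degree-one multisection produced by Riemann--Roch is irreducible and smooth, hence genuinely a section of $\phi$. Everything after that is lattice arithmetic together with the parity obstruction $r\equiv s\pmod 8$ for even unimodular lattices, so I expect the write-up of Step 1 to be where the real care is needed.
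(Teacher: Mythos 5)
The paper contains no proof of this statement: it is imported verbatim from Elkies and Sch\"utt, so there is no internal argument to compare against, and your proposal must be judged on its own. It is correct, and it is in substance the standard lattice-theoretic proof of this fact. Your Step 1 --- a class $D\in\Spone$ with $D\cdot f=1$ yields an honest section --- is exactly the criterion the paper records (without proof) at the start of Section~\ref{sec:genusone}, and your Riemann--Roch justification is sound; note only that $(D+nf)^2\ge-2$ already suffices for $\pm(D+nf)$ to be effective, and that the primitivity of $f$ you invoke is precisely what the paper grants in Proposition~\ref{prop:vfphi}. The arithmetic step is also right: since $p\Spone\dual\subset\Spone$ and $f$ is primitive, the divisibility $m_f$ of $f$ divides $p$; and $m_f=p$ would make $\Spone+\Z(f/p)$ an even integral overlattice of index $p$, hence of discriminant $-p^{2}/p^{2}=-1$, i.e.\ an even unimodular lattice of signature $(1,21)$, contradicting the congruence $r-s\equiv 0\pmod 8$. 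This argument is uniform in $p$, which matches the generality in which the theorem is stated, and it dovetails with the paper's surrounding machinery: read through the duality $j:\Sps\inj\Spss$, the same divisibility computation is what drives Theorem~\ref{thm:nosections}. In short: no gap, and a genuinely self-contained replacement for the external citation.
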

Thus we obtain another proof of~\cite[Proposition 12.1]{EvdG}:
\begin{corollary}[\cite{EvdG}]\label{cor:noJacobian}
There exist no Jacobian fibrations on $\Xpten$.
\end{corollary}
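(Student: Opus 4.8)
The plan is to derive a contradiction from the existence of a Jacobian fibration on $\Xpten$ by transporting it across the duality to $\Xpone$. First I would apply Lemma~\ref{lem:main} with $\sigma=10$ and $\sigma\sprime=1$, so that the N\'eron-Severi lattices $\Spten$ and $\Spone$ are related by $\Spone\cong \Spten\dual(p)$; in particular Theorem~\ref{thm:genusone} gives a canonical bijection $[\phi]\mapsto[\phi\sprime]$ between $\E(\Xpten)$ and $\E(\Xpone)$. Suppose for contradiction that there is a Jacobian fibration $\phi:\Xpten\to\P^1$, i.e.\ a genus one fibration admitting a section. Let $\phi\sprime:\Xpone\to\P^1$ be a genus one fibration on $\Xpone$ whose lattice equivalence class $[\phi\sprime]\in\E(\Xpone)$ corresponds to $[\phi]\in\E(\Xpten)$ under Theorem~\ref{thm:genusone}.

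Next I would invoke Theorem~\ref{thm:nosections1} in the direction $\sigma=10$, $\sigma\sprime=1$: since $\phi$ is Jacobian, the corresponding fibration $\phi\sprime$ on $\Xpone$ does \emph{not} admit a section. On the other hand, Theorem~\ref{thm:ESsection} (Elkies--Sch\"utt) asserts that \emph{every} genus one fibration on $\Xpone$ admits a section. These two statements about $\phi\sprime$ are contradictory, so no Jacobian fibration on $\Xpten$ can exist.

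One subtlety I would be careful about is the logical placement of the quantifiers. Theorem~\ref{thm:nosections1} is phrased for a \emph{given} $\phi\sprime$ with $[\phi\sprime]$ corresponding to $[\phi]$; to apply it I only need the existence of at least one genus one fibration on $\Xpone$ in the class $[\phi\sprime]$, which is guaranteed because $[\phi\sprime]$ lies in $\E(\Xpone)$ and every lattice equivalence class in $\E(\Xpone)$ is by definition represented by an actual genus one fibration on $\Xpone$. Then Theorem~\ref{thm:ESsection} applies to that very fibration, and the contradiction is immediate. The only real content being used is the combination of the three cited results; there is essentially no obstacle beyond checking that $\sigma=10$ indeed satisfies $\sigma\le 10$ (so $\Xpten$ makes sense) and that $10+1=11$, so that the hypotheses of Theorem~\ref{thm:genusone}, Theorem~\ref{thm:nosections1}, and Theorem~\ref{thm:ESsection} are all met simultaneously. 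Hence the corollary follows.
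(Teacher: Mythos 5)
Your proof is correct and takes essentially the same route the paper intends: the corollary is exactly the combination of Theorem~\ref{thm:genusone}, Theorem~\ref{thm:nosections1} and Theorem~\ref{thm:ESsection}. The only cosmetic difference is that you argue by contradiction starting from a hypothetical Jacobian fibration on $\Xpten$ (applying Theorem~\ref{thm:nosections1} with $\sigma=10$, $\sigma\sprime=1$) rather than transporting each fibration from $\Xpone$ forward; since the correspondence of Theorem~\ref{thm:genusone} is symmetric in $\sigma$ and $\sigma\sprime$, both directions are equally valid.
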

By an \emph{$ADE$-type}, we mean a finite formal sum of the symbols 
$A_i$ $(i\ge 1)$, $D_i$ $(j\ge 4)$ and $E_k$ $(k=6,7,8)$
with non-negative integer coefficients.
For a genus one fibration $\phi: Y\to\P^1$ on a $K3$ surface $Y$,
we have the $ADE$-type of reducible fibers of $\phi$.
This $ADE$-type 
depends only on the lattice equivalence class $[\phi]\in \E(Y)$
(see Proposition~\ref{prop:depends_only_on_latequiv}).
Therefore we can use $R_{[\phi]}$ to denote the $ADE$-type of the reducible fibers of $\phi$.
\par 
\medskip
From the classification of lattice equivalence classes of genus one fibrations of
$\Xtwoone$  by Elkies and Sch\"utt~\cite{ElkiesSchuett},  and  that of  $\Xthreeone$ by Sengupta~\cite{Sengupta},
we obtain the classification of lattice equivalence classes of genus one fibrations on
$\Xtwoten$ and $\Xthreeten$.
In particular, 
we obtain the list of  $ADE$-types $R_{[\phi\sprime]}$ of the reducible fibers 
of genus one fibrations $\phi\sprime$ on
$\Xtwoten$ and $\Xthreeten$.
See Theorems~\ref{thm:fibchar2} and~\ref{thm:fibchar3}.
\par
\medskip
In Elkies and Sch\"utt~\cite{ElkiesSchuett} and Sengupta~\cite{Sengupta}  mentioned above,
they also obtained explicit defining equations of the Jacobian fibrations.
Note that the lattice equivalence classes of all \emph{extremal} (quasi-)~elliptic  fibrations 
(i.e., Jacobian fibrations with Mordell-Weil rank zero)
on supersingular $K3$ surfaces are classified in Shimada~\cite{MR2059747}.
\par
\medskip
As the second application of Lemma~\ref{lem:main}, 
we investigate the automorphism group of the nef cone of a supersingular $K3$ surface.
For a $K3$ surface $Y$,
let $\NE(Y)\subset S_Y\tensor\R$  denote the nef cone.
We denote  by
$ \aut(\NE(Y))\subset \OG^+(S_Y)$
the group of isometries of $S_Y$ that preserve $\NE(Y)$.
Since   $\Aut(\Xps)$ acts on $\Sps$ faithfully (Rudakov and Shafarevich~\cite[Section 8, Proposition 3]{MR633161}),
we have
\begin{equation}\label{eq:auts}
\Aut (\Xps)\subset \aut(\NE(\Xps)) \subset \OG^+(\Sps).
\end{equation}
Using the description of   $\Aut(\Xtwoone)$  
by Dolgachev and Kondo~\cite{MR1935564}, 
and that of $\Aut(\Xthreeone)$ by Kondo and Shimada~\cite{KondoShimada}, 
we give a set of generators of $\aut(\NE(\Xtwoten))$ and $\aut(\NE(\Xthreeten))$
in Theorems~\ref{thm:char2sigma10} and~\ref{thm:char3sigma10}, respectively. 
\par
\medskip
Suppose that $p$ is odd.
We fix a lattice $N$ isomorphic to $\Sps$.
Then a  quadratic space $(N_0, q_0)$ of dimension $2\sigma$
over $\F_p$ is defined by
\begin{equation}\label{eq:N0q0}
N_0:=pN\dual/pN\quand 
q_0(px\bmod pN):= px^2 \bmod p\quad (x\in N\dual).
\end{equation}
We fix a \emph{marking} $\eta: N\isom \Sps$
for  a supersingular $K3$ surface $X:=\Xps$ defined over $k$.
Then $\aut(\NE(X))$ acts on $(N_0, q_0)$,
and  the \emph{period} $K_{(X, \eta)}\subset N_0\tensor k$ of the marked supersingular $K3$ surface 
$(X, \eta)$ is defined as the Frobenius pull-back  of the kernel of the natural homomorphism
$$
N\tensor k \to S_X\tensor k \to H^2_{\DR}(X/k)
$$
(see Section~\ref{sec:Torelli}).
In virtue of   Torelli theorem  for supersingular $K3$ surfaces by  Ogus~\cite{MR563467}, \cite{MR717616},
the subgroup $\Aut(X)$ of $\aut(\NE(X))$
is equal to the stabilizer subgroup of the period $K_{(X, \eta)}$.
In particular,
the index of $\Aut (\Xps)$ in $\aut(\NE(\Xps))$ is finite.
On the other hand,
the classification of $2$-reflective lattices due to Nikulin~\cite{MR633160}
implies that  $\aut(\NE(\Xps))$ is infinite.
Hence, at least when $p$ is odd,
$\Aut (\Xps)$ is an infinite group.
See~Sections~\ref{sec:chamber} and~\ref{sec:Torelli} for details.
Moreover,  Lieblich and Maulik~\cite{11023377} proved that,
if $p>2$, then $\Aut (\Xps)$ is finitely generated and 
its action on $\NE(\Xps)$ has a  rational polyhedral fundamental
domain.
\par
\medskip
We say that a supersingular $K3$ surface $X$ is \emph{generic}
if there exists a marking $\eta: N\isom S_X$ such that
the isometries of $(N_0, q_0)$ that preserve the period $K_{(X, \eta)}\subset N_0\tensor k$
are only scalar multiplications
(see Definition~\ref{def:generic}).
Using the surjectivity of the period mapping proved by Ogus~\cite{MR717616},
we prove the following:
\begin{theorem}\label{thm:existenceofgeneric}
Suppose that $p$ is odd and $\sigma>1$. Then 
 there exist an algebraically closed  field $k$ and a supersingular $K3$ surface $X$ with Artin invariant $\sigma$
defined over $k$ that is  generic.
\end{theorem}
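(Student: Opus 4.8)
The plan is to construct $X$ by prescribing a sufficiently generic period and then invoking Ogus's surjectivity theorem. Fix a lattice $N\cong\Sps$ and the quadratic $\F_p$-space $(N_0,q_0)$ of~\eqref{eq:N0q0}, and put $G:=\OG(N_0,q_0)$, a finite group. The only scalars $\lambda\in\F_p$ with $\lambda\cdot\id$ orthogonal are $\lambda=\pm 1$, and $\pm\id$ preserve every linear subspace of $N_0\tensor k$; hence $\overline G:=G/\{\pm\id\}$ acts on the set of subspaces of $N_0\tensor k$, and a marked supersingular $K3$ surface $(X,\eta)$ over an algebraically closed field $k$ of characteristic $p$ is generic precisely when the stabilizer of its period $K_{(X,\eta)}$ in $\overline G$ is trivial. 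So it suffices to exhibit, over some such $k$, a characteristic subspace $K\subset N_0\tensor k$ with trivial $\overline G$-stabilizer: by the surjectivity of Ogus's period mapping~\cite{MR717616}, $K$ is then the period of a marked supersingular $K3$ surface $(X,\eta)$ over $k$ (after possibly enlarging $k$, which affects nothing, since $G$ is defined over $\F_p$ and the base change of subspaces is injective), and $X$ automatically has Artin invariant $\sigma$ because $S_X\cong N$.

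Recall that the characteristic subspaces are parametrized by Ogus's period space $\MMM$, a variety of finite type over $\F_p$ of dimension $\sigma-1$, on which $G$ acts algebraically through $\overline G$. The argument I would give has two parts. First reduce to a single geometrically irreducible component: fix one, $\MMM_1\subset\MMM\tensor\overline\F_p$, and let $H\subset\overline G$ be the finite-index subgroup preserving it; any element of $\overline G\setminus H$ carries $\MMM_1$ onto a different component and hence fixes no point of $\MMM_1$. Second, granting that $H$ acts \emph{faithfully} on $\MMM_1$ --- the key point, discussed below --- the rest is formal: for each $h\in H$ with $h\neq 1$ the fixed locus $\MMM_1^{h}$ is a proper closed subvariety of the irreducible variety $\MMM_1$, so $U:=\MMM_1\setminus\bigcup_{1\neq h\in H}\MMM_1^{h}$ is a non-empty open subvariety; being of finite type over $\F_p$ it has a closed point with finite residue field, whence $U(\overline\F_p)\neq\emptyset$, and any $K\in U(\overline\F_p)$ has trivial $\overline G$-stabilizer. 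Note that $\sigma>1$ is essential here: for $\sigma=1$ the space $\MMM$ is a single point, $\overline G$ acts trivially on it, and indeed no supersingular $K3$ surface with Artin invariant $1$ is generic.

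The faithfulness of $H$ on $\MMM_1$ is where I expect the real work to lie. Suppose $1\neq h\in H$ fixes $\MMM_1$ pointwise. Since $h$ is $\F_p$-rational it also fixes every $\Gal(\overline\F_p/\F_p)$-conjugate component pointwise, hence fixes \emph{every} characteristic subspace $K\subset N_0\tensor\overline\F_p$; and, again by $\F_p$-rationality, $h$ commutes with the Frobenius twist $\Phi$, so it preserves each term of the flag $K\supset K\cap\Phi(K)\supset\cdots$ and in particular the isotropic line $\ell_K:=\bigcap_{j=0}^{\sigma-1}\Phi^j(K)$, acting on it by a scalar. The set $\{\ell_K\}$ of all these lines is stable under both $G$ and $\Gal(\overline\F_p/\F_p)$, so its span descends to a nonzero $G$-submodule of $N_0$, which must be all of $N_0$ since the natural $(2\sigma)$-dimensional orthogonal module is absolutely irreducible for $2\sigma\geq 4$; thus the lines $\ell_K$ span $N_0\tensor\overline\F_p$. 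A short additional argument --- using Ogus's explicit description of $\MMM$ to produce enough of the $\ell_K$ in general position relative to a basis chosen among them --- then forces $h$ to act by one global scalar, so $h\in\{\pm\id\}$, i.e.\ $h=1$ in $\overline G$, a contradiction. The steps requiring genuine care are Ogus's structure theory for characteristic subspaces (the flag and the line $\ell_K$, together with the fact that every characteristic subspace is strictly characteristic so that $\dim\ell_K=1$) and this last general-position step; everything else is bookkeeping.
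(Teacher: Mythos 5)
Your overall architecture coincides with the paper's: both proofs reduce, via Ogus's surjectivity theorem, to showing that the only elements of $\OG(N_0,q_0)$ fixing every characteristic subspace on one geometric component of the period space $\MMM$ are $\pm\id$. (The paper phrases this by taking the geometric generic point of $\MMM_+$, whose stabilizer is exactly the kernel of the action on $\MMM_+$; your ``remove the fixed loci of all $h\neq 1$ and take an $\overline{\F}_p$-point of what remains'' is equivalent for a finite group acting on an irreducible variety.) The genuine gap is that you leave this kernel computation --- what you call the faithfulness of $H$ on $\MMM_1$ --- essentially unproved. Your sketch gets as far as: $h$ fixes each line $\ell_K$, these lines span, hence $h$ is diagonalizable with the $\ell_K$ among its eigenlines. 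But a spanning set of eigenlines gives only diagonalizability, not scalarity; to conclude $h=\pm\id$ you must show the eigenvalues on these lines all agree, and the ``short additional argument \dots\ producing enough of the $\ell_K$ in general position'' that you defer is precisely where the entire content of the theorem lives. The paper's Section 8 is devoted to exactly this point: it chooses a basis $b_i^{(\pm1)}$ of $N_0\tensor\F_{p^2}$ adapted to the non-neutral form, writes down the explicit characteristic subspaces $K_e$ ($e\in\EEE_+$) lying on $\MMM_+$, intersects them to show that every isotropic vector of $N_0$ itself (over $\F_p$, not merely the lines $\ell_K$ over $\overline{\F}_p$) is an eigenvector of the stabilizer, and then uses carefully chosen isotropic sums such as $b_i^{(\alpha)}+b_j^{(\beta)}$ and $b_1^{(1)}+b_1^{(-1)}+\xi(b_2^{(1)}+b_2^{(-1)})+\eta(b_3^{(1)}+b_3^{(-1)})$ to force all eigenvalues to coincide. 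Nothing in your proposal substitutes for this computation.

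Two further points. First, your auxiliary assertion that ``every characteristic subspace is strictly characteristic'' (needed for $\dim\ell_K=1$) is false for $\sigma\geq 3$: a characteristic $K$ only satisfies $\dim(K\cap\varphi(K))=\sigma-1$, and the descending intersections $\bigcap_{j\le i}\varphi^j(K)$ can stabilize early --- those points of $\MMM$ correspond to smaller Artin invariant. You would need to restrict to the open dense strictly characteristic locus, which also matters when you invoke surjectivity of the period map to produce a surface whose Artin invariant is exactly $\sigma$. Second, the case $\sigma=2$ needs separate attention: the paper treats it by a different argument (an isometry fixing every line in both rulings of the quadric $Q_0\subset\P_*N_0$ fixes every point of $\P_*N_0$), since the explicit $K_e$-construction requires $\sigma\geq 3$; your sketch does not explain why the general-position step survives when the lines $\ell_K$ sweep out only the graph-of-Frobenius curve on $Q_0$ rather than the whole quadric.
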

We observe that,
if  $\Xthreeten$ is generic, 
then  the  index of $\Aut(\Xthreeten)$ in $\aut(\NE(\Xthreeten))$ is very large (see~Remark~\ref{rem:heavy}).
An analogous result for a generic complex Enriques surface 
was obtained by Barth and Peters~\cite{MR718937}.

%
\par
\medskip
This paper is organized as follows.
In~Section~\ref{sec:pre},
we fix notation and terminologies about lattices and $K3$ surfaces.
In~Section~\ref{sec:NSssK3},
Lemma~\ref{lem:main} is proved
by means of the fundamental results of Rudakov and Shafarevich~\cite{MR633161}
on the N\'eron-Severi lattices of supersingular $K3$ surfaces.
In~Section~\ref{sec:genusone},
we study genus one fibrations on supersingular $K3$ surfaces,
and prove Theorems~\ref{thm:genusone} and~\ref{thm:nosections1}. 
Moreover,  the bijections $\E(\Xpone)\cong\E(\Xpten)$ for $p=2$ and $3$
are given explicitly 
in Tables~\ref{table:char2} and~\ref{table:char3}.
In~Section~\ref{sec:chamber},
we review the classical method to investigate  
the orthogonal group of a hyperbolic lattice 
by means of a chamber decomposition of the associated hyperbolic space,
and fix some notation and terminologies.
We then apply this method to the nef cone of a supersingular $K3$ surface.
In Section~\ref{sec:autNE23},
we give a set of generators of $\aut(\NE(\Xtwoten))$ and $\aut(\NE(\Xthreeten))$.
In~Section~\ref{sec:Torelli},
we review the theory of  the period mapping and  Torelli theorem for supersingular $K3$ surfaces
in odd characteristics due to Ogus~\cite{MR563467}, \cite{MR717616},
and describe the relation between $\Aut(\Xps)$ and $\aut(\NE(\Xps))$.
In Section~\ref{sec:existenceofgeneric},
we prove Theorem~\ref{thm:existenceofgeneric}.
%
\par
\medskip

{\bf Convention.}
We use $\aut$ to denote  automorphism groups of  lattice theoretic objects,
and $\Aut$ to denote  automorphism groups  of  geometric objects on  $K3$ surfaces.
\section{Preliminaries}\label{sec:pre}
\subsection{Lattices}
A \emph{$\Q$-lattice}  is  a free $\Z$-module  $L$ of  finite rank equipped with 
a non-degenerate symmetric bilinear form $\intM{\cdot , \cdot}{L} : L \times L \to \Q$. 
We omit the subscript $L$ in $\intM{\cdot , \cdot}{L}$
if no confusions will occur.
If $\intM{\cdot , \cdot}{L}$ takes values in $\Z$,
we say that $L$ is a \emph{lattice}.
For $x \in L\otimes \R $, we call $x^2 :=\intM{x,x}{}$ the \emph{norm} of $x$.
A vector in $L\tensor \R$ of norm $n$ is sometimes called an \emph{$n$-vector}.
A lattice $L$ is said to be  \emph{even} if 
$x^2\in 2\Z$ holds
for any $x\in L$. 
\par
Let $L$ be a free $\Z$-module  of  finite rank.
A submodule $M$ of $L$ is \emph{primitive} if $L/M$ is torsion free.
A non-zero vector $v\in L$ is \emph{primitive}
if the submodule of $L$ generated by  $v$ is primitive.
\par
Let $L$ be a $\Q$-lattice of rank $r$.
For  a non-zero rational number
$m$, we denote by $L(m)$ the free $\Z$-module $L$ with 
the symmetric  bilinear form
$\intM{x, y}{L(m)}:=m \intM{x, y}{L}$.
The signature of $L$ is the signature of the real quadratic space $L\tensor\R$. 
We say that $L$ is \emph{negative definite}
if the signature of $L$ is $(0, r)$,
and $L$ is \emph{hyperbolic}
if the signature is $(1, r-1)$.
A \emph{Gram matrix} of $L$ is an $r\times r$ matrix with entries $\intM{e_i, e_j}{}$,
where $\{e_1, \dots, e_r\}$ is a basis of $L$.
The determinant of a Gram matrix of $L$ is called the \emph{discriminant} of $L$.
\par
 For an even lattice $L$,
 the set of $(-2)$-vectors is denoted by $\RRR(L)$.
A \emph{negative} definite even lattice $L$ is called a \emph{root lattice}
if $L$ is generated by $\RRR(L)$.
Let $R$ be an $ADE$-type.
The root lattice of type $R$ is the root lattice whose
Gram matrix is the Cartan matrix of type $R$.
Suppose that $L$ is negative definite.
By the \emph{$ADE$-type of $\RRR(L)$},
we mean the $ADE$-type of the root sublattice $\gen{\RRR(L)}$ of $L$
generated by $\RRR(L)$.
(See, for example, Bourbaki~\cite{MR0240238} for the classification of root lattices.)
\par
Let $L$ be an even lattice and let $L\dual :={\rm Hom}(L,\Z)$ 
be identified with a submodule  of  $L\otimes \Q $ with the extended symmetric bilinear form.
We call this $\Q$-lattice $L\dual$ the \emph{dual lattice} of $L$.
The \emph{discriminant group} of $L$ is defined to be 
the quotient
$L\dual/L$, and is denoted by $A_L$.
We  define the \emph{discriminant quadratic form} of $L$ 
$$q_L : A_L \to \Q /2\Z$$ 
by $q_L(x\bmod L) := x^2 \bmod 2\Z$. 
The order of $A_L$ is equal to the discriminant of $L$ up to sign.
We say that $L$ is \emph{unimodular} if $A_L$ is trivial,
while 
$L$ is \emph{$p$-elementary} if $A_L$  is  $p$-elementary.  
An even  $2$-elementary lattice $L$ is said to be  \emph{of type \typeI} 
if $q_L(x \bmod  L) \in \Z/2\Z$
holds for any $x \in L\dual$.
Note that $L$ is $p$-elementary if and only if $pG_L\inv$ is an integer matrix,
where $G_L$ is a Gram matrix of $L$.
\par
Let ${\OG}(L)$ denote  the orthogonal group of a lattice $L$, that is, 
the group of isomorphisms of $L$ preserving $\intM{\cdot, \cdot}{L}$.
We assume that ${\OG}(L)$ acts on $L$ from \emph{right}, and 
the action of $g\in \OG(L)$ on $v\in L\tensor\R$ is denoted by $v\mapsto v^g$.
Similarly ${\OG}(q_L)$ denotes the group of isomorphisms of $A_L$ preserving $q_L$.
There is a natural homomorphism ${\OG}(L) \to {\OG}(q_L)$.
%
%

Let $L$ be a hyperbolic lattice.
A \emph{positive cone} of $L$ is one of the two connected components of 
$$
\set{x\in L\tensor \R}{x^2>0}.
$$
Let $\PPP_{L}$ be a positive cone of $L$.
We denote by ${\OG}^+(L)$ the group of isometries of $L$ that preserve  $\PPP_{L}$.
We have ${\OG}(L)={\OG}^+(L)\times\{\pm 1\}$.
For a vector $v\in L\tensor \R$ with $v^2<0$,  we put 
$$
(v)\sperp:=\set{x\in \PPP_L}{\intM{x, v}{}=0},
$$
which is   a real hyperplane of $\PPP_L$.
An isometry $g\in \OG^+(L)$  is called a \emph{reflection with respect to $v$}
or a \emph{reflection into  $(v)\sperp$}
if $g$ is of order $2$ and fixes each point of $(v)\sperp$.
An element $r$ of $\RRR(L)$ defines a reflection
$$
s_r : x\mapsto x+\intM{x, r}{} r
$$
with respect to $r$.
We denote by $W(L)$ the subgroup of $\OG^+(L)$ generated by  the set of these reflections 
$\shortset{s_r}{r\in \RRR(L)}$. 
It is obvious that $W(L)$ is normal in $\OG^+(L)$.
\subsection{$K3$ surfaces}
Let $Y$ be a $K3$ surface,
and let $S_Y$ denote the N\'eron-Severi lattice of $Y$.
A smooth rational curve on $Y$ is called a \emph{$(-2)$-curve}.
We denote by $\PPP(Y)\subset S_Y\tensor \R$ the positive cone containing an ample class of $Y$.
Recall that the \emph{nef  cone} $\NE(Y)$ of $Y$ is defined by 
$$
\NE(Y):=\set{x\in S_Y\tensor \R}{\intM{x, [C]}{}\ge 0\;\;\textrm{for any curve $C$ on $Y$}},
$$
where $[C]\in S_Y$ is the class of a curve $C\subset Y$.
Then  $\NE(Y)$ is contained in the closure $\overline{\PPP}(Y)$ of $\PPP(Y)$ in $S_Y\tensor \R$.
We put
$$
\NE\spcirc(Y):=\NE(Y)\cap \PPP(Y)=\set{x\in \NE(Y)}{x^2>0}.
$$
The following is well-known. See, for example, Rudakov and Shafarevich~\cite[Section 3]{MR633161}.
\begin{proposition}\label{prop:nef}
{\rm (1)} We have 
$$
\NE(Y)=\set{x\in S_Y\tensor \R}{\intM{x, [C]}{}\ge 0\;\;\textrm{for any $(-2)$-curve $C$ on $Y$}}.
$$
{\rm (2)} 
If $v\in S_Y$ is contained in $\overline{\PPP}(Y)$,
then there exists $g\in W(S_Y)$ such that $v^g\in \NE(Y)$.
\end{proposition}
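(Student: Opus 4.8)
The plan is to prove (1) directly from adjunction on $Y$ together with elementary geometry of the hyperbolic lattice $S_Y$, and then to deduce (2) from (1) by the classical ``reflect towards an ample class'' argument, which also exhibits $\NE(Y)$ as a fundamental domain for the action of $W(S_Y)$ on $\overline{\PPP}(Y)$.

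\textbf{Part (1).} Since a $(-2)$-curve is in particular a curve, $\NE(Y)$ is contained in the right-hand side. For the reverse inclusion I would use $\NE(Y)\subseteq\overline{\PPP}(Y)$ (the remark preceding the Proposition), and then show that any $x\in\overline{\PPP}(Y)$ with $\intM{x,[C]}{}\ge0$ for every $(-2)$-curve $C$ satisfies $\intM{x,[D]}{}\ge0$ for every irreducible curve $D$ on $Y$ (the general case follows by linearity). As $K_Y$ is trivial, the adjunction formula gives $D^2=2p_a(D)-2\ge-2$. If $D^2=-2$ then $p_a(D)=0$, so $D$ is a smooth rational curve, i.e.\ a $(-2)$-curve, and $\intM{x,[D]}{}\ge0$ by hypothesis. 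If $D^2\ge0$, then $[D]$ is a nonzero effective class with $[D]^2\ge0$ and $\intM{[D],h}{}>0$ for an ample class $h$; a short argument (e.g.\ approximating $[D]$ by $[D]+\tfrac{1}{n}h$, which has positive norm and positive pairing with $h$) shows $[D]\in\overline{\PPP}(Y)$. Both $x$ and $[D]$ now lie in the closure of a positive cone of $S_Y$, and any two such vectors pair non-negatively (Cauchy--Schwarz in the negative definite orthogonal complement of a positive-norm vector); hence $\intM{x,[D]}{}\ge0$. This gives (1) in the form $\NE(Y)=\overline{\PPP}(Y)\cap\set{x}{\intM{x,[C]}{}\ge0\ \textrm{for every $(-2)$-curve $C$ on $Y$}}$, which is how the statement is used below.

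\textbf{Part (2).} Fix an ample class $h\in S_Y$ and let $v\in S_Y$ with $v\in\overline{\PPP}(Y)$; we may assume $v\ne0$. Since $W(S_Y)\subseteq\OG^+(S_Y)$ preserves both $S_Y$ and $\overline{\PPP}(Y)$, each $v^g$ with $g\in W(S_Y)$ lies in $S_Y\cap\overline{\PPP}(Y)$, so $\intM{v^g,h}{}$ is a positive integer (it is strictly positive because $v^g\ne0$ and $h^2>0$). Hence $\set{\intM{v^g,h}{}}{g\in W(S_Y)}$ is a nonempty set of positive integers and attains its minimum; choose $g_0$ realizing it and put $w:=v^{g_0}\in\overline{\PPP}(Y)$. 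I claim $w\in\NE(Y)$. By part (1) it suffices to check $\intM{w,[C]}{}\ge0$ for every $(-2)$-curve $C$. If not, put $r:=[C]\in\RRR(S_Y)$ with $\intM{w,r}{}<0$ and apply $s_r\in W(S_Y)$; using $w^{s_r}=w+\intM{w,r}{}r$ we get
$$
\intM{w^{s_r},h}{}=\intM{w,h}{}+\intM{w,r}{}\,\intM{r,h}{}<\intM{w,h}{},
$$
because $\intM{w,r}{}<0$ while $\intM{r,h}{}=\intM{[C],h}{}>0$ ($h$ ample, $C$ a curve). Since $w^{s_r}=v^{g_0 s_r}$ and $g_0 s_r\in W(S_Y)$, this contradicts the minimality of $\intM{v^{g_0},h}{}$. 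Therefore $\intM{w,[C]}{}\ge0$ for all $(-2)$-curves $C$, so $w=v^{g_0}\in\NE(Y)$, which proves (2) with $g=g_0$.

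\textbf{Where the work is.} No step hides a real difficulty: (1) is a repackaging of adjunction on a $K3$ surface plus elementary Lorentzian geometry, and (2) is the standard reflection argument. The one point to watch in (1) is that the right-hand side, taken literally, need not be contained in $\overline{\PPP}(Y)$ in general (for instance when $S_Y$ has no $(-2)$-vectors), so the identity should be read — and is used in (2) — as the description of $\NE(Y)$ as the chamber of the $W(S_Y)$-chamber decomposition of $\overline{\PPP}(Y)$ cut out by the $(-2)$-curves, exactly as in Rudakov and Shafarevich~\cite[Section 3]{MR633161}. The only genuine input in (2) is that the orbit $\shortset{v^g}{g\in W(S_Y)}$ has an element closest to $h$, which rests on the integrality of $\intM{v^g,h}{}$ (hence on the hypothesis $v\in S_Y$), without which the infimum of these pairings could fail to be attained.
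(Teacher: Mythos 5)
Your proof is correct. The paper itself gives no proof of this proposition — it only cites Rudakov--Shafarevich~\cite[Section 3]{MR633161} as the statement is classical — and your argument (adjunction plus Lorentzian positivity for (1), minimization of the integer $\intM{v^g,h}{}$ over the $W(S_Y)$-orbit for (2)) is exactly the standard one found there; your observation that (1) must be read as an equality of subsets of $\overline{\PPP}(Y)$ is a fair point and is handled correctly in your argument.
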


\section{N\'eron-Severi lattices of supersingular $K3$ surfaces}\label{sec:NSssK3}
%
Let $X_{p, \sigma}$  be  a supersingular $K3$ surface with Artin invariant $\sigma$
in characteristic $p>0$.
Then the isomorphism class of the N\'eron-Severi lattice $\Sps$ of $X_{p, \sigma}$ depends only on $p$ and $\sigma$, 
and is characterized as follows
(see  Rudakov-Shafarevich~\cite[Sections 3,4 and 5]{MR633161} for the proof). 
\begin{theorem}[\cite{MR633161}] \label{thm:characterizationofSps}
{\rm (1)}
The lattice $\Sps$ is an even hyperbolic $p$-elementary lattice of rank $22$ with discriminant $-p^{2\sigma}$.
Moreover, $S_{2, \sigma}$ is of type $\typeI$.
\par
{\rm (2)}
Suppose that  $N$ is an even hyperbolic $p$-elementary lattice of rank $22$ with discriminant $-p^{2\sigma}$.
When  $p=2$, we further assume that $N$ is of type $\typeI$.
Then $N$ is isomorphic to $\Sps$.
\end{theorem}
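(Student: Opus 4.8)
The plan is to treat the two parts by quite different means: part (1) is a compilation of standard facts about $K3$ surfaces together with the crystalline input of Artin, while part (2) is a purely lattice-theoretic uniqueness statement that I would deduce from Nikulin's theory of discriminant forms.

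For part (1), evenness of $\Sps$ is immediate from Riemann--Roch on a $K3$ surface, which gives $D^2 = 2\chi(\mathcal{O}_X(D)) - 4 \in 2\Z$ for every divisor class $D$. That $\Sps$ is hyperbolic, i.e.\ of signature $(1,21)$, follows from the fact that it contains an ample class, which has positive norm, together with the Hodge index theorem, valid in every characteristic; supersingularity enters only to guarantee $\rank \Sps = b_2(X) = 22$. That the discriminant group $A_{\Sps}$ is $p$-elementary of rank $2\sigma$ --- so that $\disc \Sps = \pm p^{2\sigma}$, the sign being $(-1)^{21} = -1$ by the signature --- is Artin's theorem~\cite{MR0371899}: one studies $H^2_{\mathrm{cris}}(X/W)$ as a ``$K3$-crystal'', uses that for a supersingular surface all Frobenius slopes equal $1$, and identifies the cokernel of $\Sps\otimes\Z_p$ inside the appropriate Tate-twisted crystal. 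The only genuinely characteristic-$2$ point is that $S_{2,\sigma}$ is of type $\typeI$; for this one needs finer information on the slope (de Rham--Witt) filtration of $H^2_{\mathrm{cris}}$ when $p=2$, showing that the resulting finite quadratic form on $A_{S_{2,\sigma}}$ takes values in $\Z/2\Z$ and not merely in $\tfrac{1}{2}\Z/2\Z$; this is carried out in~\cite{MR0371899} and~\cite{MR633161}.

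For part (2), which asserts that any even hyperbolic $p$-elementary lattice $N$ of rank $22$ and discriminant $-p^{2\sigma}$ (of type $\typeI$ when $p=2$) is isometric to $\Sps$, it suffices --- since $\Sps$ is itself such a lattice by part (1) --- to prove that any two such lattices are isometric. I would invoke Nikulin's uniqueness theorem: an even indefinite lattice $N$ with $\rank N \ge \ell(A_N)+2$ is determined up to isometry by its signature and its discriminant quadratic form $q_N$; here $\rank N = 22 \ge 2\sigma+2$ since $\sigma\le 10$. The problem thus reduces to showing $q_N$ is determined by the numerical data. It is a nondegenerate $p$-elementary finite quadratic form of length $2\sigma$. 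For odd $p$ such forms are classified by their length together with a single sign (the square class of the determinant over $\F_p$), and this sign is pinned down by combining $\disc N = -p^{2\sigma}$ with the Milgram congruence $\mathrm{sign}(q_N)\equiv \mathrm{sign}(N) = -20 \pmod 8$ and with the rigidity of $N$ away from $p$: at each prime $\ell\ne p$ the lattice $N\otimes\Z_\ell$ is the \emph{unique} even unimodular $\Z_\ell$-lattice of rank $22$ with determinant in the square class of $-1$ (for instance $U^{11}$ over $\Z_2$, with $U$ the hyperbolic plane), so that the product formula for Hasse invariants forces the local structure of $N$ at $p$. For $p=2$ the type $\typeI$ hypothesis restricts $q_N$ to the forms built out of the two even rank-$2$ forms on $(\Z/2)^2$, and the congruence $\mathrm{sign}(q_N)\equiv 4 \pmod 8$ then singles out a unique one. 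In every case $q_N$ is determined, and Nikulin's theorem completes the argument.

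I expect the genuine obstacles to be exactly the two non-formal points above: the characteristic-$2$ type $\typeI$ assertion in part (1), which rests on crystalline input special to $p=2$, and, in part (2), the verification that the $p$-elementary discriminant form is truly rigid --- i.e.\ pushing the Hasse-invariant and genus bookkeeping far enough to kill the last sign ambiguity. Everything else --- Riemann--Roch, the Hodge index theorem, Nikulin's uniqueness theorem, the classification of finite quadratic forms --- is standard and can be cited directly.
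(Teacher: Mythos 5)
The paper itself offers no proof of this theorem: it is stated as a quotation of Rudakov--Shafarevich, with a pointer to Sections 3--5 of \cite{MR633161} (and, implicitly, to Artin \cite{MR0371899}) for all the substance. So there is nothing in the paper to match your argument against line by line; the relevant comparison is with the cited sources. Your part (1) is exactly the standard decomposition those sources use: evenness from Riemann--Roch, signature $(1,21)$ from the Hodge index theorem plus $\rank S_X = 22$, and the $p$-elementarity and length $2\sigma$ of the discriminant group from Artin's analysis of the crystalline cohomology; the type $\typeI$ claim for $p=2$ is indeed the one genuinely delicate point and is correctly deferred to \cite{MR0371899}, \cite{MR633161}. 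For part (2) you take a genuinely different route from Rudakov--Shafarevich, who develop the classification of $p$-elementary hyperbolic lattices directly (constructing explicit representatives and proving uniqueness by hand in their Section 1), whereas you invoke Nikulin's general uniqueness theorem for even indefinite lattices with $\rank \ge \ell(A)+2$ and then pin down the discriminant form. Your route is shorter and the numerology checks out: $22 \ge 2\sigma+2$ since $\sigma \le 10$; for $p$ odd the two candidate $p$-elementary forms of length $2\sigma$ have signatures differing by $4 \bmod 8$, and for $p=2$ the type $\typeI$ hypothesis reduces the candidates to $u^{\sigma}$ and $u^{\sigma-1}\oplus v$, again with signatures $0$ and $4 \bmod 8$; in both cases the Milgram congruence $\mathrm{sign}(q_N) \equiv 1-21 \equiv 4 \pmod 8$ selects a unique form (consistent with the paper's later assertion in Section 6 that $q_0$ is non-neutral). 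The Hasse-invariant remarks about $\ell \ne p$ are correct but redundant once Milgram is used. What Nikulin's machinery buys you is brevity and uniformity in $p$; what the Rudakov--Shafarevich approach buys is explicit Gram matrices for $\Sps$, which the rest of the present paper does not need but which are useful elsewhere.
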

Using this theorem,
we can prove Lemma~\ref{lem:main} easily.
\begin{proof}[Proof of Lemma~\ref{lem:main} ]
It is enough to show that $\Sps\dual (p)$ is an even $p$-elementary  lattice
of discriminant $-p^{2\sigma\sprime}$,
and that $S_{2, \sigma}\dual (2)$ is of type $\typeI$.
Since $\Sps$ is $p$-elementary,
we have $p \Sps\dual\subset \Sps$.
Therefore $\Sps\dual (p)$ is a lattice.
Let $G_{p, \sigma}$ be a Gram matrix of  $\Sps$.
Then the determinant of the Gram matrix $p G_{p, \sigma}\inv$ of  $\Sps\dual (p)$ is 
equal to  $p^{22}\cdot \det(G_{p, \sigma})\inv=-p^{2\sigma\sprime}$.
Therefore the discriminant of $\Sps\dual (p)$ is  $-p^{2\sigma\sprime}$.
Since $p(p G_{p, \sigma}\inv)\inv=G_{p, \sigma}$ is an integer matrix,
$\Sps\dual (p)$ is $p$-elementary.
Suppose that $p$ is odd.
Then, for any $\xi \in \Sps\dual$,
we have $p\xi \in \Sps$ and 
hence $\intM{p\xi, p\xi}{\Sps}=p\intM{\xi, \xi}{\Sps\dual (p)}$ is even.
Therefore $\Sps\dual (p)$ is even.
Suppose that $p=2$.
Then, for any $\xi \in S_{2, \sigma}\dual$,
we have $\intM{\xi, \xi}{S_{2, \sigma}\dual}\in\Z$,
because $S_{2, \sigma}$ is of type  $\typeI$.
Therefore $S_{2, \sigma}\dual (2)$ is even.
Moreover, for any $\eta \in (S_{2, \sigma}\dual(2))\dual=S_{2, \sigma}(1/2)$,
we have $\intM{\eta, \eta}{S_{2, \sigma}(1/2)}\in \Z$,
because $S_{2, \sigma}$ is even.
Therefore $S_{2, \sigma}\dual (2)$ is of type $\typeI$.
\end{proof}
\begin{corollary}\label{cor:j}
Suppose that $\sigma+\sigma\sprime=11$.
Then there exists an embedding  of $\Z$-modules 
$j : \Sps\inj \Spss$
that induces an isomorphism of lattices $\Sps\dual(p)\cong \Spss$.
This embedding induces an isomorphism
$$
j_* : \OG(\Sps)\isom \OG(\Spss).
$$
Moreover 
such an embedding $j$ is unique up to
compositions  with elements of $\OG(\Spss)$.
\end{corollary}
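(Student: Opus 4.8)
The plan is to deduce everything from Lemma~\ref{lem:main} together with the following standard observation: if $M$ is an even $p$-elementary lattice, then $pM\dual$ is a sublattice of $M$ (since $p$-elementarity means $pM\dual \subset M$), and the natural inclusion $pM\dual \hookrightarrow M$ can be reinterpreted, after rescaling, as an embedding of $M\dual(p)$ into $M$. First I would make this precise for $M = \Sps$: because $\Sps$ is $p$-elementary (Theorem~\ref{thm:characterizationofSps}(1)), we have $p\Sps\dual \subset \Sps$, so there is a $\Z$-module inclusion $\iota : p\Sps\dual \hookrightarrow \Sps$. Now $p\Sps\dual$, as an abstract $\Z$-module, is canonically identified with $\Sps\dual$ via $x \mapsto px$, and under this identification the form $\intM{\cdot,\cdot}{\Sps}$ restricted to $p\Sps\dual$ pulls back to $\intM{px,py}{\Sps} = p^2\intM{x,y}{\Sps\dual}$, i.e. to the form of $\Sps\dual(p^2)$... here I must be slightly careful: the right rescaling to land in $\Spss \cong \Sps\dual(p)$ is obtained by noting that $\Sps\dual(p)$ has underlying module $\Sps\dual$ with form $p\intM{\cdot,\cdot}{\Sps\dual}$, and I want to realize this inside $\Sps$. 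The correct statement is that the composite $\Sps\dual(p) \xrightarrow{\sim} \Spss$ of Lemma~\ref{lem:main} gives, upon composing with the tautological inclusion $\Spss \hookrightarrow \Spss\dual(p)^{\vee}$-free reasoning, an embedding; concretely, I would take $j$ to be any lattice isometry $\Sps\dual(p) \isom \Spss$ supplied by Lemma~\ref{lem:main}, precomposed with the $\Z$-module map $\Sps \hookrightarrow \Sps\dual$ (inclusion of a lattice in its dual) — but that map goes the wrong way on ranks unless we also rescale. The cleanest route: apply Lemma~\ref{lem:main} to $\Sps$ to get $\Spss \cong \Sps\dual(p)$; then the inclusion $\Sps \hookrightarrow \Sps\dual$ together with the identity on underlying modules $\Sps\dual = \Sps\dual(p)$ (only the form changes) yields $\Sps \hookrightarrow \Sps\dual(p) \cong \Spss$, and one checks that the image has the right index and that the induced form on $\Sps$ — pulled back from $\Spss$ — is precisely $p$ times the original, i.e. the map induces $\Sps\dual(p) \cong \Spss$ after the appropriate reinterpretation. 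I would spell this out carefully since the bookkeeping of which module carries which form is the only subtle point.

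Next, the isomorphism $j_* : \OG(\Sps) \isom \OG(\Spss)$. The key fact is that an isometry of a $\Q$-lattice $\Sps\tensor\Q$ preserves $\Sps$ if and only if it preserves $\Sps\dual$ (duality is functorial for isometries), and rescaling the form by $p$ does not change the automorphism group: $\OG(\Sps\dual) = \OG(\Sps\dual(p))$ as subgroups of $\OG(\Sps\tensor\Q)$. Hence $\OG(\Sps) = \OG(\Sps\dual) = \OG(\Sps\dual(p)) \cong \OG(\Spss)$ via $j$; more precisely, for $g \in \OG(\Sps)$, the unique $\Q$-linear extension $g_\Q$ of $g$ to $\Sps\tensor\Q = \Spss\tensor\Q$ preserves $\Spss$ (since it preserves $\Sps$ hence $\Sps\dual$ hence $j(\Sps\dual(p)) = \Spss$), giving $j_*(g) \in \OG(\Spss)$, and this is clearly a group isomorphism with inverse built the same way from the inverse embedding. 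I would state this as: $j_*(g)$ is characterized by $j \circ g = j_*(g) \circ j$ after identifying everything inside $\Sps\tensor\Q$.

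Finally, uniqueness of $j$ up to $\OG(\Spss)$. Suppose $j, j' : \Sps \inj \Spss$ both induce isomorphisms $\Sps\dual(p) \cong \Spss$. Then $j' \circ j^{-1}$ (defined over $\Q$, via $\Sps\tensor\Q \cong \Spss\tensor\Q$) is an isometry of $\Spss\tensor\Q$; I must show it actually lies in $\OG(\Spss)$. Since $j, j'$ have the same image up to the lattice structure — both images equal $p\Sps\dual$ inside $\Sps\tensor\Q$ under the appropriate identification, or more directly both are index-$p^{\,2\sigma'}$ sublattices realizing the same isometry class — the composite $h := j' \circ j^{-1}$ maps $\Spss$ isometrically onto $\Spss$, i.e. $h \in \OG(\Spss)$, and $j' = h \circ j$. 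The main obstacle I anticipate is not conceptual but notational: keeping straight the three avatars $\Sps$, $\Sps\dual$, $\Sps\dual(p)$ living in the common $\Q$-vector space $\Sps\tensor\Q$, and verifying that "preserves $\Sps$" genuinely implies "preserves $\Spss$" — this hinges on $\Spss$ being recoverable from $\Sps$ by purely $\OG(\Sps\tensor\Q)$-equivariant operations (taking the dual, rescaling), which it is. Once that is set up, all three claims fall out immediately, and I would present the argument in that order: construct $j$, deduce $j_*$, then deduce uniqueness.
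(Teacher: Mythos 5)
Your construction is the intended one: the paper states Corollary~\ref{cor:j} without proof as an immediate consequence of Lemma~\ref{lem:main}, and your route --- compose the tautological $\Z$-module inclusion $\Sps\subset\Sps\dual$ (the underlying module of $\Sps\dual(p)$) with an isometry $\Sps\dual(p)\isom\Spss$, then use that dualizing and rescaling are $\OG$-equivariant to get $j_*$ and the uniqueness up to $\OG(\Spss)$ --- is exactly the argument being left to the reader. Despite the hesitations in your first paragraph, you land on the correct bookkeeping, so the proposal is correct and matches the paper's approach.
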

\begin{remark}\label{rem:jvinNE}
Suppose that $v\in \Sps$ satisfies $v^2\ge 0$.
Then, by Proposition~\ref{prop:nef}(2),
we can choose $j: \Sps\inj \Spss$ 
in Corollary~\ref{cor:j} in such a way that $j(v)\in \NE(\Xpss)$.
\end{remark}
\section{Genus one fibrations}\label{sec:genusone}
Let $Y$ be a $K3$ surface defined over an algebraically closed field  of arbitrary characteristic.
Recall that $f_\phi\in S_Y$ is the class of a fiber of a genus one fibration $\phi: Y\to \P^1$,
$\E(Y)$ is  the set of lattice equivalence classes 
of genus one fibrations on $Y$, 
and $[\phi]\in \E(Y)$ is the class containing $\phi$.
We  summarize properties of a  genus one fibration $\phi: Y\to \P^1$
that depends only on the class $[\phi]$.
See Sections 3 and  4 of Rudakov and Shafarevich~\cite{MR633161},
and Shioda~\cite{MR1081832} for the proof.

(1)
The fibration $\phi$ admits  a section if and only if there exists 
a $(-2)$-vector $z\in S_Y$ such that $\intM{f_{\phi}, z}{}=1$.

(2)
Note that $f_\phi\in S_Y$ is primitive of norm $0$,
and that $\gen{f_\phi}\sperp/\gen{f_\phi}$ is an even negative definite lattice,
where $\gen{f_\phi}\sperp$ is the orthogonal complement in $ S_Y$ 
of the lattice $\gen{f_\phi}$ of rank $1$ generated by  $f_{\phi}$.
The $ADE$-type of the reducible fibers of $\phi$ is equal to the $ADE$-type of the set  
$\RRR(\gen{f_\phi}\sperp/\gen{f_\phi})$ of $(-2)$-vectors in $\gen{f_\phi}\sperp/\gen{f_\phi}$.

(3)
Suppose that $\phi$ admits a section $Z\subset Y$.
Then $f_{\phi}$ and $[Z]\in S_Y$ 
generate  an even unimodular hyperbolic lattice $U_{\phi}$ of rank $2$ in $S_Y$.
Let $K_{\phi}$ denote the orthogonal complement of $U_{\phi}$  in $S_Y$.
We have an orthogonal direct-sum decomposition
$$
S_Y=U_{\phi}\oplus K_{\phi},
$$
and the lattice $\gen{f_\phi}\sperp/\gen{f_\phi}$ is isomorphic to $K_{\phi}$.
Then the Mordell-Weil group of $\phi$ is isomorphic to $K_{\phi}/\gen{\RRR(K_{\phi})}$,
where $\gen{\RRR(K_{\phi})}$ is the root sublattice of $K_{\phi}$ generated by the $(-2)$-vectors in $K_{\phi}$.

(4)
In characteristic $2$ or $3$, 
$\phi$ is quasi-elliptic  if and only if $\gen{\RRR(K_{\phi})}$ is $p$-elementary
of rank $20$.
\par
\medskip 
As a corollary,
we obtain the following:
\begin{proposition}\label{prop:depends_only_on_latequiv}
Suppose that genus one fibrations $\phi: Y\to \P^1$ and $\psi: Y\to \P^1$ on $Y$
are lattice-equivalent.
Then the following hold.

{\rm (1)}
The fibration $\phi$ admits a section if and only if so does $\psi$.

{\rm (2)}
The $ADE$-type of the reducible fibers of $\phi$ is equal to that of $\psi$.

{\rm (3)}
Suppose that $\phi$ and $\psi$ admit a section.
Then the Mordell-Weil groups for $\phi$ and for $\psi$ are isomorphic.

{\rm (4)}
In characteristic $2$ or $3$,
the fibration $\phi$ is quasi-elliptic if and only if so is $\psi$.
\end{proposition}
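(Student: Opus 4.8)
The plan is to deduce Proposition~\ref{prop:depends_only_on_latequiv} directly from the list of properties (1)--(4) recorded just above it, which are attributed to Rudakov--Shafarevich~\cite{MR633161} and Shioda~\cite{MR1081832}. The key observation is that each of these properties is expressed purely in terms of the vector $f_\phi\in S_Y$ (its norm, the orthogonal complement $\gen{f_\phi}\sperp/\gen{f_\phi}$, the existence of a $(-2)$-vector pairing to $1$ with it, etc.), and the definition of lattice equivalence gives an isometry $g\in\OG^+(S_Y)$ with $f_\phi^g=f_\psi$. So the strategy throughout is: apply $g$, observe that $g$ carries the lattice-theoretic data attached to $f_\phi$ isomorphically onto the corresponding data attached to $f_\psi$, then invoke the relevant item from the list.

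In detail: for (1), if $\phi$ admits a section then by item (1) of the list there is a $(-2)$-vector $z\in S_Y$ with $\intM{f_\phi,z}{}=1$; then $z^g$ is again a $(-2)$-vector (since $g$ is an isometry) and $\intM{f_\psi,z^g}{}=\intM{f_\phi^g,z^g}{}=\intM{f_\phi,z}{}=1$, so $\psi$ admits a section; the converse is symmetric, using $g\inv$. For (2), the isometry $g$ restricts to an isometry $\gen{f_\phi}\sperp\isom\gen{f_\psi}\sperp$ sending $f_\phi$ to $f_\psi$, hence descends to a lattice isomorphism $\gen{f_\phi}\sperp/\gen{f_\phi}\isom\gen{f_\psi}\sperp/\gen{f_\psi}$; an isomorphism of lattices preserves the set of $(-2)$-vectors and the root sublattice they generate, hence preserves its $ADE$-type, and by item (2) of the list this $ADE$-type is exactly the $ADE$-type of reducible fibers. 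For (3), note first that (1) already guarantees $\psi$ has a section once $\phi$ does; now $g$ induces $\gen{f_\phi}\sperp/\gen{f_\phi}\isom\gen{f_\psi}\sperp/\gen{f_\psi}$, and by item (3) these lattices are identified with $K_\phi$ and $K_\psi$ respectively, so $K_\phi\cong K_\psi$ as lattices; this isomorphism carries $\gen{\RRR(K_\phi)}$ onto $\gen{\RRR(K_\psi)}$, hence induces an isomorphism of the quotients $K_\phi/\gen{\RRR(K_\phi)}\cong K_\psi/\gen{\RRR(K_\psi)}$, which by item (3) are the Mordell-Weil groups. For (4), again $\gen{\RRR(K_\phi)}\cong\gen{\RRR(K_\psi)}$ as lattices, so one is $p$-elementary of rank $20$ iff the other is, and item (4) identifies this condition with being quasi-elliptic in characteristic $2$ or $3$.

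I do not expect a genuine obstacle here: the proposition is essentially a formal consequence of the fact that lattice equivalence is implemented by a global isometry of $S_Y$ together with the fact that all the invariants in question are functorial under isometries. The only point requiring a word of care is that lattice equivalence is defined via $\OG^+(S_Y)$ rather than $\OG(S_Y)$, but this plays no role: restriction to orthogonal complements, passage to quotients, and the set $\RRR$ of $(-2)$-vectors are all preserved by arbitrary isometries. One should also make explicit that $g$ sends $\gen{f_\phi}$ to $\gen{f_\psi}$ as rank-one sublattices (immediate from $f_\phi^g=f_\psi$ and the fact that $f_\phi$, $f_\psi$ are primitive of norm $0$), which is what licenses the descent to the quotient lattices. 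So the proof will be short: state the isometry $g$, and run through (1)--(4) each in one or two lines as above.

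\begin{proof}
Since $\phi$ and $\psi$ are lattice-equivalent, there exists $g\in\OG^+(S_Y)$ with $f_\phi^g=f_\psi$.
As $f_\phi$ and $f_\psi$ are primitive of norm $0$, the isometry $g$ maps the rank-one sublattice $\gen{f_\phi}$ onto $\gen{f_\psi}$, hence restricts to an isometry $\gen{f_\phi}\sperp\isom\gen{f_\psi}\sperp$ and descends to a lattice isomorphism
$$
\bar g : \gen{f_\phi}\sperp/\gen{f_\phi}\;\isom\;\gen{f_\psi}\sperp/\gen{f_\psi}.
$$

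(1) If $\phi$ admits a section, then by property~(1) of the list above there is $z\in\RRR(S_Y)$ with $\intM{f_\phi,z}{}=1$; then $z^g\in\RRR(S_Y)$ and $\intM{f_\psi,z^g}{}=\intM{f_\phi^g,z^g}{}=\intM{f_\phi,z}{}=1$, so $\psi$ admits a section. Applying the same argument with $g\inv$ gives the converse.

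(2) The isomorphism $\bar g$ carries $\RRR(\gen{f_\phi}\sperp/\gen{f_\phi})$ bijectively onto $\RRR(\gen{f_\psi}\sperp/\gen{f_\psi})$ and hence maps the root sublattice generated by the former onto the root sublattice generated by the latter; in particular their $ADE$-types coincide. By property~(2), this common $ADE$-type is the $ADE$-type of reducible fibers of $\phi$ and of $\psi$.

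(3) By part~(1), $\psi$ admits a section as well. By property~(3), $\gen{f_\phi}\sperp/\gen{f_\phi}\cong K_\phi$ and $\gen{f_\psi}\sperp/\gen{f_\psi}\cong K_\psi$, so $\bar g$ yields a lattice isomorphism $K_\phi\cong K_\psi$. This isomorphism maps $\gen{\RRR(K_\phi)}$ onto $\gen{\RRR(K_\psi)}$, hence induces an isomorphism $K_\phi/\gen{\RRR(K_\phi)}\cong K_\psi/\gen{\RRR(K_\psi)}$; by property~(3), these are the Mordell-Weil groups of $\phi$ and $\psi$.

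(4) As in part~(3), $\bar g$ induces a lattice isomorphism $\gen{\RRR(K_\phi)}\cong\gen{\RRR(K_\psi)}$. Hence $\gen{\RRR(K_\phi)}$ is $p$-elementary of rank $20$ if and only if $\gen{\RRR(K_\psi)}$ is, and by property~(4) this condition characterizes quasi-ellipticity in characteristic $2$ or $3$.
\end{proof}
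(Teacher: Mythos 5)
Your proof is correct and is exactly the argument the paper intends: the paper states this proposition without proof as an immediate corollary of the listed properties (1)--(4), all of which are lattice-theoretic invariants of $f_\phi$ transported by the isometry $g$. Your write-up simply makes explicit what the paper leaves implicit, with no gaps.
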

\begin{definition}
For a hyperbolic  lattice $S$,
we put 
$$
\widetilde{\EEE}(S):=\shortset{v\in S\tensor \Q  }{v\ne 0, v^2=0}/\Q \sptimes
\quand \EEE(S):=\widetilde{\EEE}(S)/ \OG(S).
$$
\end{definition}
\begin{remark}\label{rem:EEEPPPNE}
Let  a positive cone $\PPP_S$ of $S$ be fixed.
Then each  element of $\widetilde{\EEE}(S)$
is represented by a unique non-zero primitive vector $v\in S$ of norm $0$
that is contained in the closure $\overline{\PPP}_S$ of $\PPP_S$ in $S\tensor\R$.
\end{remark}
In Sections 3 and  4 of Rudakov and Shafarevich~\cite{MR633161},
the following is proved:
\begin{proposition}\label{prop:vfphi}
Let $v$ be a non-zero vector of $S_Y$.
Then there exists a genus one fibration $\phi: Y\to \P^1$
such that $v=f_{\phi}$ if and only if $v$ is primitive,  $v^2=0$, and 
$v\in \NE(Y)$.
\end{proposition}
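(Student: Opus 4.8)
The plan is to establish the two directions separately. The "only if" direction is the easier one: if $\phi: Y \to \P^1$ is a genus one fibration and $v = f_\phi$ is the class of a fiber $F$, then $F$ is an effective divisor with $F^2 = 0$ (two distinct fibers are disjoint), so $v^2 = 0$; since a general fiber is irreducible and reduced, $v$ is primitive (the fibration realizes $v$ as the pullback of a point under a morphism, and a morphism to $\P^1$ cannot factor through a multiple of $v$ in $\operatorname{Pic}$); and $\intM{v, [C]}{} = \intM{F, C}{} \geq 0$ for every curve $C$, because $F$ is nef (it moves in a base-point-free pencil), so $v \in \NE(Y)$ by the definition of the nef cone (or by Proposition~\ref{prop:nef}(1)). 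So the real content is the converse.

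For the "if" direction, suppose $v \in S_Y$ is primitive, $v^2 = 0$, and $v \in \NE(Y)$. First I would use the Riemann--Roch theorem on the $K3$ surface $Y$: since $v^2 = 0 \geq -2$, we have $\chi(\mathcal{O}_Y(v)) = 2 + \tfrac12 v^2 = 2 > 0$, so either $v$ or $-v$ is effective. Because $v \in \NE(Y) \subset \overline{\PPP}(Y)$ lies in the closure of the positive cone containing an ample class $h$, we have $\intM{v, h}{} \geq 0$, in fact $\intM{v,h}{} > 0$ since $v \ne 0$ and $h$ is strictly positive on $\overline{\PPP}(Y) \setminus \{0\}$; hence $-v$ cannot be effective, and $v$ is effective. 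Write the effective divisor in class $v$ as $D = \sum a_i C_i + M$, where the $C_i$ are the $(-2)$-curves appearing and $M$ is the part with no $(-2)$-curve components. Since $v \in \NE(Y)$, $\intM{v, C_i}{} \geq 0$ for every $(-2)$-curve, so one argues (this is the standard Rudakov--Shafarevich / Pjateckii-Shapiro--Shafarevich analysis of $\overline{\PPP}(Y)$) that in fact $D$ can be taken to have no fixed components and the linear system $|v|$ is base-point free: the key point is that $v$ being nef and isotropic forces $h^0(\mathcal{O}_Y(v)) = 2$, the system $|v|$ has no fixed part, and its generic member is a connected curve of arithmetic genus $1$ with $\dim |v| = 1$; then $\mathcal{O}_Y(v)$ defines a morphism $\phi: Y \to \P^1$ whose general fiber is a curve of arithmetic genus one, i.e. a genus one fibration with $f_\phi = v$.

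The main obstacle — and the step I expect to require the most care — is ruling out fixed components of $|v|$ and showing base-point-freeness, i.e. passing from "$v$ is nef and isotropic and effective" to "$|v|$ is a genus one pencil." In characteristic $0$ this is classical (Pjateckii-Shapiro--Shafarevich, or Saint-Donat's analysis of linear systems on $K3$ surfaces), but since $Y$ is over an algebraically closed field of arbitrary characteristic, I would instead cite Sections~3 and~4 of Rudakov and Shafarevich~\cite{MR633161}, where exactly this statement is established in all characteristics (including the possibility that $\phi$ is quasi-elliptic in characteristics $2$ and $3$), together with Shioda~\cite{MR1081832}; the paper has already flagged these references for the properties (1)--(4) of genus one fibrations used just above, so the cleanest write-up is to reduce the proposition to those sources, supplying the short Riemann--Roch argument above to produce an effective representative and using $v \in \NE(Y)$ to apply their criterion. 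One should also note that if $v$ is merely in $\overline{\PPP}(Y)$ rather than in $\NE(Y)$, then by Proposition~\ref{prop:nef}(2) some $W(S_Y)$-translate of $v$ is a fiber class, which is the companion remark (cf. Remark~\ref{rem:EEEPPPNE}) that makes $\EEE(S_Y)$ the right object to classify fibrations up to lattice equivalence.
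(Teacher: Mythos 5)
Your proposal is correct and matches the paper's treatment: the paper gives no argument of its own for this proposition, simply attributing it to Sections~3 and~4 of Rudakov and Shafarevich~\cite{MR633161}, which is exactly the source you reduce to for the hard step (fixed components and base-point-freeness in arbitrary characteristic). The extra detail you supply --- the easy ``only if'' direction and the Riemann--Roch argument producing an effective representative of $v$ --- is accurate and consistent with how that reference proceeds, so there is nothing to correct.
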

Combining  Propositions~\ref{prop:nef},~\ref{prop:vfphi} and Remark~\ref{rem:EEEPPPNE},
we obtain the following:
\begin{corollary}
The map $\phi\mapsto f_{\phi}$ induces a bijection from 
$\E(Y)$ to $\EEE(S_Y)$. 
\end{corollary}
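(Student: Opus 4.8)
The plan is to factor the correspondence through $\widetilde{\EEE}(S_Y)$ and then feed in, in turn, Proposition~\ref{prop:vfphi}, Remark~\ref{rem:EEEPPPNE}, and Proposition~\ref{prop:nef}(2). First I would check that the map is well defined. By Proposition~\ref{prop:vfphi}, for every genus one fibration $\phi$ the class $f_\phi\in S_Y$ is a non-zero primitive vector of norm $0$, so it determines a point of $\widetilde{\EEE}(S_Y)$ and hence a class in $\EEE(S_Y)=\widetilde{\EEE}(S_Y)/\OG(S_Y)$; and if $\phi,\psi$ are lattice-equivalent, so that $f_\phi^g=f_\psi$ for some $g\in\OG^+(S_Y)\subset\OG(S_Y)$, these classes agree. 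Thus $\phi\mapsto f_\phi$ descends to a map $\E(Y)\to\EEE(S_Y)$.

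For surjectivity, I would start from a class in $\EEE(S_Y)$, pick by Remark~\ref{rem:EEEPPPNE} its unique representative that is a primitive norm-$0$ vector $v\in S_Y$ lying in $\overline{\PPP}(Y)$, apply Proposition~\ref{prop:nef}(2) to find $g\in W(S_Y)\subset\OG^+(S_Y)$ with $v^g\in\NE(Y)$ (still primitive of norm $0$, since $g$ is a lattice isometry), and then invoke Proposition~\ref{prop:vfphi} to produce a genus one fibration $\phi$ with $f_\phi=v^g$; since $v^g$ and $v$ lie in one $\OG(S_Y)$-orbit, $[\phi]$ hits the chosen class. For injectivity, suppose $f_\phi^g=c\,f_\psi$ with $g\in\OG(S_Y)$ and $c\in\Q^\times$; as $g$ preserves primitivity we get $c=\pm1$, and writing $\OG(S_Y)=\OG^+(S_Y)\times\{\pm1\}$ we may take $g\in\OG^+(S_Y)$. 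Then $f_\phi^g=\pm f_\psi$ with both $f_\phi,f_\psi\in\NE(Y)\subset\overline{\PPP}(Y)$ and $g$ preserving $\overline{\PPP}(Y)$, and since $\overline{\PPP}(Y)\cap(-\overline{\PPP}(Y))=\{0\}$ while $f_\psi\ne0$, only the $+$ sign is possible, so $f_\phi^g=f_\psi$ with $g\in\OG^+(S_Y)$, i.e. $\phi$ and $\psi$ are lattice-equivalent.

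The argument is essentially bookkeeping with signs and positive cones; everything genuinely geometric has been placed in Propositions~\ref{prop:vfphi} and~\ref{prop:nef}. The one point I would take care over — and the main, if minor, obstacle — is the passage between $\OG(S_Y)$-orbits and $\OG^+(S_Y)$-orbits, which comes down to the elementary fact that a non-zero isotropic vector and its negative cannot both lie in the closed positive cone $\overline{\PPP}(Y)$.
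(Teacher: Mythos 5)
Your argument is correct and is exactly the route the paper takes: the paper's proof consists of the single sentence ``Combining Propositions~\ref{prop:nef},~\ref{prop:vfphi} and Remark~\ref{rem:EEEPPPNE}, we obtain the following,'' and you have supplied precisely the bookkeeping (well-definedness, surjectivity via $W(S_Y)$, and the sign analysis using $\overline{\PPP}(Y)\cap(-\overline{\PPP}(Y))=\{0\}$) that this combination entails.
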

%
%
%
From now on, we work over an algebraically closed field of characteristic $p>0$.
\begin{proof}[Proof of Theorem~\ref{thm:genusone}]
Consider the  embedding $j:  \Sps\inj \Spss$ in Corollary~\ref{cor:j}.
Then $j$ 
is unique up to $\OG(\Spss)$,
induces a bijection from $\widetilde{\EEE}(\Sps)$ to $\widetilde{\EEE}(\Spss)$,
and induces an isomorphism $\OG(\Sps)\cong \OG(\Spss)$.
Hence it induces a canonical bijection from $\EEE(\Sps)$ to $\EEE(\Spss)$.
\end{proof}
We denote this canonical one-to-one 
correspondence from $\E(X_{p, \sigma})$ to 
$\E(X_{p, \sigma\sprime})$ by $[\phi]\mapsto [\phi\sprime]$.
\begin{remark}\label{rem:jfphi}
Let a genus one fibration $\phi:\Xps\to \P^1$ be given,
and let $\phi\sprime:\Xpss\to \P^1$ be a representative of 
$[\phi\sprime]$.
Then
we can choose the embedding $j:  \Sps\inj \Spss$ inducing $\Sps\dual(p)\cong \Spss$
in such a way that $j(f_{\phi})$ is a  scalar multiple of $f_{\phi\sprime}$
 by a positive integer.
\end{remark}
\begin{theorem}\label{thm:nosections}
Suppose that a genus one fibration $\phi:\Xps\to\P^1$ admits a section.
Then the corresponding genus one fibration $\phi\sprime :\Xpss\to\P^1$ does not admit a section.
Moreover the $ADE$-type of the reducible fibers of $\phi\sprime$ is equal to
the $ADE$-type of  $\RRR(K_{\phi}\dual (p))$.
\end{theorem}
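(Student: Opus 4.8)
\textbf{Proof proposal for Theorem~\ref{thm:nosections}.}
The plan is to exploit the embedding $j:\Sps\inj\Spss$ of Corollary~\ref{cor:j}, chosen as in Remark~\ref{rem:jfphi} so that $j(f_\phi)$ is a positive multiple of $f_{\phi\sprime}$, together with the orthogonal splitting $S_Y=U_\phi\oplus K_\phi$ coming from the section of $\phi$ recalled in item (3) before Proposition~\ref{prop:depends_only_on_latequiv}. First I would observe that, since $U_\phi$ is unimodular, passing to the dual lattice gives $\Sps\dual=U_\phi\dual\oplus K_\phi\dual=U_\phi\oplus K_\phi\dual$, and after scaling by $p$ we get $\Sps\dual(p)=U_\phi(p)\oplus K_\phi\dual(p)$. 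Via the isomorphism $\Sps\dual(p)\cong\Spss$ of Corollary~\ref{cor:j}, this exhibits $\Spss$ as an orthogonal direct sum $U_\phi(p)\oplus K_\phi\dual(p)$ in which the class $f_{\phi\sprime}$ (a primitive norm-zero vector proportional to the image of $f_\phi$) lies in the rank-two summand $U_\phi(p)$. Concretely, if $f_\phi,e$ is a hyperbolic basis of $U_\phi$ with $e$ the class of the section, then in $U_\phi(p)$ we have $f_\phi^2=0$, $e^2=0$ (or $-2$ — I will need to track which; in any case $\intM{f_\phi,e}{U_\phi(p)}=p$), so the natural "section candidate" now pairs with the fiber class to give $p$, not $1$.

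The key step for the first assertion is then: $\phi\sprime$ admits a section if and only if there is a $(-2)$-vector $z\in\Spss$ with $\intM{f_{\phi\sprime},z}{}=1$ (item (1) before Proposition~\ref{prop:depends_only_on_latequiv}). I would argue this is impossible by a divisibility obstruction. Writing $\Spss=U_\phi(p)\oplus K_\phi\dual(p)$ and noting $f_{\phi\sprime}$ is proportional to $f_\phi\in U_\phi(p)$, the pairing $\intM{f_{\phi\sprime},z}{}$ for any $z\in\Spss$ depends only on the $U_\phi(p)$-component of $z$, and on that summand every inner product with $f_\phi$ is a multiple of $p$ (because $\intM{f_\phi,\cdot}{U_\phi(p)}=p\intM{f_\phi,\cdot}{U_\phi}$ takes values in $p\Z$). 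Hence $\intM{f_{\phi\sprime},z}{}\in p\Z$ for all $z$, so it can never equal $1$; therefore $\phi\sprime$ has no section. I should double-check the normalization so that $f_{\phi\sprime}$ really is the primitive norm-zero vector in $\overline{\PPP}(\Xpss)$ proportional to $j(f_\phi)$ — using Remark~\ref{rem:jvinNE} to place $j(f_\phi)$ in $\NE(\Xpss)$ — and that primitivity of $f_{\phi\sprime}$ is consistent with the above (it is: $f_\phi$ is primitive in $U_\phi$, and the $p$-scaling does not change which vectors are primitive).

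For the $ADE$-type assertion, I would use item (2) before Proposition~\ref{prop:depends_only_on_latequiv}: the $ADE$-type of the reducible fibers of $\phi\sprime$ is the $ADE$-type of $\RRR(\gen{f_{\phi\sprime}}\sperp/\gen{f_{\phi\sprime}})$ computed inside $\Spss$. Since $f_{\phi\sprime}$ lies in the summand $U_\phi(p)$ and $U_\phi(p)$ is the orthogonal complement of $K_\phi\dual(p)$, a direct computation of the orthogonal complement of $\gen{f_{\phi\sprime}}$ in $U_\phi(p)\oplus K_\phi\dual(p)$ gives $\gen{f_{\phi\sprime}}\sperp=\gen{f_{\phi\sprime}}\sperp_{U_\phi(p)}\oplus K_\phi\dual(p)$, and the quotient by $\gen{f_{\phi\sprime}}$ kills the rank-one contribution from $U_\phi(p)$, leaving a lattice isometric to $K_\phi\dual(p)$ (this mirrors exactly the fact recalled in item (3) that $\gen{f_\phi}\sperp/\gen{f_\phi}\cong K_\phi$ in the unimodular case). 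Thus the reducible-fiber $ADE$-type of $\phi\sprime$ equals the $ADE$-type of $\RRR(K_\phi\dual(p))$, as claimed.

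The main obstacle I anticipate is bookkeeping rather than anything deep: getting the scaling factors and the identification $f_{\phi\sprime}\sim j(f_\phi)$ exactly right (in particular confirming the multiple is a \emph{positive} integer and that $f_{\phi\sprime}$ is primitive), and making the "orthogonal complement commutes with the direct sum" step rigorous when one summand is the non-integral lattice $K_\phi\dual(p)$ — one must be careful that $\gen{f_{\phi\sprime}}\sperp$ is taken inside the \emph{lattice} $\Spss$, using the isometry with $U_\phi(p)\oplus K_\phi\dual(p)$ to transport the computation. Everything else reduces to the unimodularity of $U_\phi$ and the $p$-elementary property of $\Sps$ already established in Theorem~\ref{thm:characterizationofSps} and Lemma~\ref{lem:main}.
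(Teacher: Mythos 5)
Your proposal is correct and follows essentially the same route as the paper: choose $j$ as in Remark~\ref{rem:jfphi}, use unimodularity of $U_\phi$ to identify $\Spss$ with $U_\phi(p)\oplus K_\phi\dual(p)$ and to see that $j(f_\phi)=f_{\phi\sprime}$ is primitive, rule out a section because every pairing with $f_{\phi\sprime}$ lies in $p\Z$, and read off $\gen{f_{\phi\sprime}}\sperp/\gen{f_{\phi\sprime}}\cong K_\phi\dual(p)$ for the $ADE$-type. The bookkeeping points you flag (sign of $e^2$, primitivity under scaling) resolve exactly as you anticipate.
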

\begin{proof}
Let $z\in \Sps$ be the class of a section of $\phi$.
We choose $j :  \Sps\inj \Spss$  as in Remark~\ref{rem:jfphi}.
Since $U_\phi\dual=U_\phi$, 
we see that $j(f_{\phi})$ is primitive in $\Spss$
and hence $j(f_{\phi})=f_{\phi\sprime}$.
We have an isomorphism $S_{p, \phi\sprime}\cong U_{\phi}(p)\oplus K_{\phi}\dual(p)$
such that $f_{\phi\sprime}$ and $j(z)$ form a basis of $U_{\phi}(p)$.
Since there are no vectors $v \in U_{\phi}(p)\oplus K_{\phi}\dual(p)$ 
with $\intM{v, f_{\phi\sprime}}{}=1$,
the fibration $\phi\sprime$ does not admit a section.
Moreover the lattice $\gen{f_{\phi\sprime}}\sperp/\gen{f_{\phi\sprime}}$ is isomorphic to $K_{\phi}\dual (p)$.
\end{proof}
%
%
%
The list of lattice equivalence classes of genus one fibrations on
$X_{2,1}$ and $\Xthreeone$ were obtained by Elkies and Sch\"utt~\cite{ElkiesSchuett} and by Sengupta~\cite{Sengupta},
respectively.
From their results, we obtain the following results on supersingular $K3$ surfaces with Artin invariant $10$:
\begin{theorem}\label{thm:fibchar2}
There exist  $18$ lattice equivalence classes of genus one fibrations on  $\Xtwoten$.
The $ADE$-type $R_{[\phi\sprime]}$ of the reducible fibers of each  
$[\phi\sprime]\in \E(\Xtwoten)$ is given at the last column of Table~\ref{table:char2}.
\end{theorem}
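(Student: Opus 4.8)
The plan is to reduce Theorem~\ref{thm:fibchar2} entirely to the known classification of genus one fibrations on $\Xtwoone$ via the bijection $\EEE(\Stwoone)\cong\EEE(\Stwoten)$ of Theorem~\ref{thm:genusone} (here $\sigma=1$, $\sigma\sprime=10$, so $\sigma+\sigma\sprime=11$). First I would invoke the result of Elkies and Sch\"utt~\cite{ElkiesSchuett}: they enumerate the lattice equivalence classes of genus one fibrations on $\Xtwoone$, and by Theorem~\ref{thm:ESsection} every such fibration is Jacobian, so each class $[\phi]\in\E(\Xtwoone)$ comes with the full package of invariants described in Section~\ref{sec:genusone}(3): an orthogonal splitting $\Stwoone=U_\phi\oplus K_\phi$ with $U_\phi$ the hyperbolic plane, a negative definite $K_\phi$ of rank $20$, the $ADE$-type $R_{[\phi]}=R$ of $\RRR(K_\phi)$, and the Mordell--Weil group $K_\phi/\gen{\RRR(K_\phi)}$. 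Counting their list gives the number $18$.

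Next I would feed each of these through the duality. By Theorem~\ref{thm:nosections}, the corresponding class $[\phi\sprime]\in\E(\Xtwoten)$ has reducible-fiber $ADE$-type equal to the $ADE$-type of $\RRR(K_\phi\dual(2))$, and admits no section; by Proposition~\ref{prop:depends_only_on_latequiv} these invariants depend only on the lattice equivalence class, so they are genuine invariants of $[\phi\sprime]$. Thus the heart of the proof is the purely lattice-theoretic computation: for each rank-$20$ negative definite lattice $K_\phi$ appearing on the Elkies--Sch\"utt list, compute $K_\phi\dual(2)$ and read off the $ADE$-type of its $(-2)$-vectors. Since $\Stwoone$ is $2$-elementary of type $\typeI$ with discriminant $-4$, the orthogonal summand $K_\phi$ is $2$-elementary (of type $\typeI$) with $A_{K_\phi}\cong(\Z/2)^2$, so $2K_\phi\dual\subset K_\phi$ and $K_\phi\dual(2)$ is again an integral lattice; the computation of $\RRR(K_\phi\dual(2))$ is a finite, mechanical search in each case, and I would present the outcome in the last column of Table~\ref{table:char2}.

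Two bookkeeping points need care. I should confirm that the correspondence of Theorem~\ref{thm:genusone} is genuinely a bijection of \emph{sets} $\E(\Xtwoone)\leftrightarrow\E(\Xtwoten)$, so that ``$18$ classes on $\Xtwoone$'' transports to ``$18$ classes on $\Xtwoten$''; this is immediate from the proof of Theorem~\ref{thm:genusone}, since $j$ induces a bijection $\widetilde{\EEE}(\Stwoone)\isom\widetilde{\EEE}(\Stwoten)$ equivariantly for the identification $\OG(\Stwoone)\isom\OG(\Stwoten)$. I should also verify that each element of $\E(\Xtwoten)$ really is represented by an honest geometric genus one fibration on the actual surface $\Xtwoten$ — but this is exactly the content of the Corollary following Proposition~\ref{prop:vfphi} (every primitive norm-$0$ vector in the nef cone is $f_\psi$ for some $\psi$), applied after using Proposition~\ref{prop:nef}(2) to move the relevant vector into $\NE(\Xtwoten)$ as in Remark~\ref{rem:jvinNE}.

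The main obstacle I anticipate is neither conceptual nor about the bijection itself — it is the explicit identification, for each of the eighteen $K_\phi$ on the Elkies--Sch\"utt list, of the root sublattice of $K_\phi\dual(2)$; in particular one must check that rescaling by $2$ does not create or destroy $(-2)$-vectors in unexpected ways (a norm $-4$ vector of $K_\phi\dual$ becomes a $(-2)$-vector of $K_\phi\dual(2)$, a norm $-2$ vector becomes norm $-4$, and primitivity in $K_\phi\dual$ versus $K_\phi\dual(2)$ must be tracked). This is a routine but lengthy case analysis, and its output is recorded directly in Table~\ref{table:char2}; with that table in hand the theorem follows by combining Theorems~\ref{thm:genusone} and~\ref{thm:nosections} with Proposition~\ref{prop:depends_only_on_latequiv}.
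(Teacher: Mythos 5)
Your proposal is correct and follows essentially the same route as the paper: reduce via Theorem~\ref{thm:genusone} and Theorem~\ref{thm:nosections} to the finite lattice computation of the $ADE$-type of $\RRR(K_{\phi}\dual(2))$ for the eighteen lattices $K_{\phi}$ classified by Elkies and Sch\"utt. The only difference is presentational: the paper carries out that computation by realizing each $K_{\phi}$ as the orthogonal complement of a root lattice of type $D_4$ in a Niemeier lattice and citing an explicit algorithm, whereas you describe it as a direct mechanical search.
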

\begin{theorem}\label{thm:fibchar3}
There exist   $52$ lattice equivalence classes of genus one fibrations on   $\Xthreeten$.
The $ADE$-type $R_{[\phi\sprime]}$
of the reducible fibers 
of each $[\phi\sprime]\in \E(\Xthreeten)$
 is  given at the last column of Table~\ref{table:char3}.
\end{theorem}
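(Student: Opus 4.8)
The plan is to transport Sengupta's classification of genus one fibrations on $\Xthreeone$ across the duality of Lemma~\ref{lem:main}. By~\cite{Sengupta} there are exactly $52$ lattice equivalence classes of genus one fibrations on $\Xthreeone$, and by Theorem~\ref{thm:ESsection} every genus one fibration on $\Xthreeone$ admits a section; moreover \cite{Sengupta} records, for each class $[\phi]$, the $ADE$-type $R_{[\phi]}$ of the reducible fibers of $\phi$ together with its Mordell--Weil group. Applying Theorem~\ref{thm:genusone} with $p=3$, $\sigma=1$, $\sigma\sprime=10$, the correspondence $[\phi]\mapsto[\phi\sprime]$ is a bijection from $\E(\Xthreeone)$ onto $\E(\Xthreeten)$; this already gives $|\E(\Xthreeten)|=52$, the first assertion.

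For the $ADE$-types, fix $[\phi]\in\E(\Xthreeone)$, choose a section of $\phi$, and let $\Sthreeone=U_\phi\oplus K_\phi$ be the associated orthogonal splitting, so that the frame lattice $K_\phi\cong\gen{f_\phi}\sperp/\gen{f_\phi}$ is negative definite of rank $20$, its root sublattice $\gen{\RRR(K_\phi)}$ is the root lattice of type $R_{[\phi]}$, and $K_\phi/\gen{\RRR(K_\phi)}$ is the Mordell--Weil group of $\phi$. Since $\phi$ admits a section, Theorem~\ref{thm:nosections} applies and shows that the reducible fibers of the corresponding fibration $\phi\sprime$ on $\Xthreeten$ have $ADE$-type equal to that of $\RRR(K_\phi\dual(3))$. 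Thus the proof reduces to a finite lattice computation, to be carried out for each of the $52$ classes: determine $K_\phi$ up to isometry from Sengupta's data, form the rescaled dual lattice $K_\phi\dual(3)$, enumerate its $(-2)$-vectors --- equivalently, the vectors of norm $-2/3$ in $K_\phi\dual$ --- and read off the $ADE$-type of the root sublattice they span. The outcome is collected in the last column of Table~\ref{table:char3}.

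I expect the computation itself to be the only real difficulty, and two points need care. First, to compute $K_\phi\dual(3)$ one needs $K_\phi$ itself, not merely the root type $R_{[\phi]}$: one must know how $\gen{\RRR(K_\phi)}$ is glued into $K_\phi$. This is pinned down by the genus of $K_\phi$ (negative definite of rank $20$, discriminant group of order $9$ carrying the discriminant quadratic form $q_{\Sthreeone}$) together with the Mordell--Weil data; in practice the cleanest route is to fix once and for all an explicit Gram matrix of $\Sthreeone$, realize $\Sthreeten$ as $\Sthreeone\dual(3)$ in the common ambient $\Q$-vector space via Corollary~\ref{cor:j}, and compute all $52$ quotients $\gen{f_{\phi\sprime}}\sperp/\gen{f_{\phi\sprime}}$ directly, where $f_{\phi\sprime}=j(f_\phi)$ and $f_\phi$ runs through Sengupta's fiber classes. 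Second, since rescaling the form by $3$ changes which vectors are short, $\RRR(K_\phi\dual(3))$ bears no simple relation to $\RRR(K_\phi)$, so each of the $52$ cases genuinely requires a short-vector enumeration; this step, carried out with the aid of a computer, accounts for the bulk of the work. As a consistency check one verifies that none of the $52$ fibrations $\phi\sprime$ admits a section, in agreement with Theorem~\ref{thm:nosections1}.
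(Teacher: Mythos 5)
Your proposal is correct and follows essentially the same route as the paper: invoke Theorem~\ref{thm:genusone} for the count of $52$, reduce via Theorem~\ref{thm:nosections} to computing the $ADE$-type of $\RRR(K_{\phi}\dual(3))$ for each of Sengupta's classes, and finish by a short-vector enumeration. The only cosmetic difference is how $K_{\phi}$ is pinned down explicitly: the paper takes it as the orthogonal complement of a primitively embedded $2A_2$ in a Niemeier lattice $N_{\phi}$ (the column $R_N$), whereas you propose working directly inside a fixed Gram matrix of $S_{3,1}$ — both are adequate.
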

%
%
\begin{table}
$$
\begin{array}{cc|ccc|c}
{\rm No.} & R_N  & &\sigma=1& &\sigma=10 \\
\hline
&  & R_{[\phi]} & {\rm MW}_{\rm tor} & {\rm rank(MW)} &  R_{[\phi\sprime]} \\
\hline
1 & 4A\sb{5} + D\sb{4} & 4A\sb{5} & [3, 6] &0 & 0 \\
2 & 6D\sb{4} & 5D\sb{4} & [2, 2, 2, 2] &0 & 0 \\
3 & 2A\sb{7} + 2D\sb{5} & 2A\sb{7} + D\sb{5} & [8] &1 & A\sb{1} \\
4 & 2A\sb{9} + D\sb{6} & 2A\sb{1} + 2A\sb{9} & [10] &0 & 2A\sb{1} \\
5 & 4D\sb{6} & 2A\sb{1} + 3D\sb{6} & [2, 2, 2] &0 & 2A\sb{1} \\
6 & A\sb{11} + D\sb{7} + E\sb{6} & A\sb{11} + D\sb{7} & [4] &2 & A\sb{2} \\
7 & A\sb{11} + D\sb{7} + E\sb{6} & A\sb{3} + A\sb{11} + E\sb{6} & [6] &0 & 3A\sb{1} \\
8 & 4E\sb{6} & 3E\sb{6} & [3] &2 & A\sb{2} \\
9 & 3D\sb{8} & D\sb{4} + 2D\sb{8} & [2, 2] &0 & 4A\sb{1} \\
10 & A\sb{15} + D\sb{9} & A\sb{15} + D\sb{5} & [4] &0 & 5A\sb{1} \\
11 & A\sb{17} + E\sb{7} & 3A\sb{1} + A\sb{17} & [6] &0 & A\sb{3} \\
12 & D\sb{10} + 2E\sb{7} & 3A\sb{1} + D\sb{10} + E\sb{7} & [2, 2] &0 & A\sb{3} \\
13 & D\sb{10} + 2E\sb{7} & D\sb{6} + 2E\sb{7} & [2] &0 & 6A\sb{1} \\
14 & 2D\sb{12} & D\sb{8} + D\sb{12} & [2] &0 & 8A\sb{1} \\
15 & D\sb{16} + E\sb{8} & D\sb{4} + D\sb{16} & [2] &0 & D\sb{4} \\
16 & D\sb{16} + E\sb{8} & D\sb{12} + E\sb{8} & [1] &0 & 12A\sb{1} \\
17 & 3E\sb{8} & D\sb{4} + 2E\sb{8} & [1] &0 & D\sb{4} \\
18 & D\sb{24} & D\sb{20} & [1] &0 & 20A\sb{1} \\
\end{array}
$$
\caption{Genus one fibrations on  $\Xtwoone$ and $\Xtwoten$}\label{table:char2}
\end{table}
\begin{table}
{\small 
$$
\begin{array}{cc|ccc|c}
{\rm No.} & R_N  & &{\sigma=1}& &\sigma=10 \\
\hline
&  & R_{[\phi]} & {\rm MW}_{\rm tor} & {\rm rank(MW)} &  R_{[\phi\sprime]} \\
\hline
1 & 12A\sb{2} & 10A\sb{2} & [3, 3, 3, 3] &0 & 0 \\
2 & 8A\sb{3} & 6A\sb{3} & [4, 4] &2 & 0 \\
3 & 6A\sb{4} & 2A\sb{1} + 4A\sb{4} & [5] &2 & 0 \\
4 & 6D\sb{4} & 4D\sb{4} & [2, 2] &4 & 0 \\
5 & 4A\sb{5} + D\sb{4} & A\sb{2} + 3A\sb{5} & [3] &3 & 0 \\
6 & 4A\sb{5} + D\sb{4} & 3A\sb{5} + D\sb{4} & [2, 6] &1 & A\sb{1} \\
7 & 4A\sb{5} + D\sb{4} & 2A\sb{2} + 2A\sb{5} + D\sb{4} & [2] &2 & 0 \\
8 & 4A\sb{6} & 3A\sb{6} & [7] &2 & A\sb{1} \\
9 & 4A\sb{6} & 2A\sb{3} + 2A\sb{6} & [1] &2 & 0 \\
10 & 2A\sb{7} + 2D\sb{5} & 4A\sb{1} + 2A\sb{7} & [2, 4] &2 & 0 \\
11 & 2A\sb{7} + 2D\sb{5} & A\sb{1} + A\sb{7} + 2D\sb{5} & [4] &2 & A\sb{1} \\
12 & 2A\sb{7} + 2D\sb{5} & 2A\sb{1} + A\sb{4} + A\sb{7} + D\sb{5} & [2] &2 & 0 \\
13 & 2A\sb{7} + 2D\sb{5} & 2A\sb{4} + 2D\sb{5} & [1] &2 & 0 \\
14 & 3A\sb{8} & A\sb{2} + 2A\sb{8} & [3] &2 & A\sb{1} \\
15 & 3A\sb{8} & 2A\sb{5} + A\sb{8} & [1] &2 & 0 \\
16 & 4D\sb{6} & 3D\sb{6} & [2, 2] &2 & 2A\sb{1} \\
17 & 4D\sb{6} & 2A\sb{3} + 2D\sb{6} & [2, 2] &2 & 0 \\
18 & 2A\sb{9} + D\sb{6} & 2A\sb{9} & [5] &2 & 2A\sb{1} \\
19 & 2A\sb{9} + D\sb{6} & A\sb{3} + A\sb{9} + D\sb{6} & [2] &2 & A\sb{1} \\
20 & 2A\sb{9} + D\sb{6} & A\sb{3} + A\sb{6} + A\sb{9} & [1] &2 & 0 \\
21 & 2A\sb{9} + D\sb{6} & 2A\sb{6} + D\sb{6} & [1] &2 & 0 \\
22 & 4E\sb{6} & A\sb{2} + 3E\sb{6} & [3] &0 & A\sb{2} \\
23 & 4E\sb{6} & 4A\sb{2} + 2E\sb{6} & [3, 3] &0 & 0 \\
24 & A\sb{11} + D\sb{7} + E\sb{6} & A\sb{2} + A\sb{11} + D\sb{7} & [4] &0 & A\sb{2} \\
25 & A\sb{11} + D\sb{7} + E\sb{6} & A\sb{11} + E\sb{6} & [3] &3 & 2A\sb{1} \\
26 & A\sb{11} + D\sb{7} + E\sb{6} & 2A\sb{2} + A\sb{11} + D\sb{4} & [6] 
&1 & 0 \\
27 & A\sb{11} + D\sb{7} + E\sb{6} & A\sb{5} + D\sb{7} + E\sb{6} & [1] &2 & A\sb{1} \\
28 & A\sb{11} + D\sb{7} + E\sb{6} & 2A\sb{2} + A\sb{8} + D\sb{7} & [1] &1 & 0 \\
29 & A\sb{11} + D\sb{7} + E\sb{6} & A\sb{8} + D\sb{4} + E\sb{6} & [1] &2 & 0 \\
30 & 2A\sb{12} & A\sb{6} + A\sb{12} & [1] &2 & A\sb{1} \\
31 & 2A\sb{12} & 2A\sb{9} & [1] &2 & 0 \\
32 & 3D\sb{8} & 2A\sb{1} + 2D\sb{8} & [2, 2] &2 & 2A\sb{1} \\
33 & 3D\sb{8} & 2D\sb{5} + D\sb{8} & [2] &2 & 0 \\
34 & A\sb{15} + D\sb{9} & A\sb{3} + A\sb{15} & [4] &2 & 2A\sb{1} \\
35 & A\sb{15} + D\sb{9} & A\sb{9} + D\sb{9} & [1] &2 & A\sb{1} \\
36 & A\sb{15} + D\sb{9} & A\sb{12} + D\sb{6} & [1] &2 & 0 \\
37 & A\sb{17} + E\sb{7} & A\sb{2} + A\sb{17} & [3] &1 & A\sb{1} + A\sb{2} \\
38 & A\sb{17} + E\sb{7} & A\sb{11} + E\sb{7} & [1] &2 & A\sb{1} \\
39 & A\sb{17} + E\sb{7} & A\sb{5} + A\sb{14} & [1] &1 & 0 \\
40 & D\sb{10} + 2E\sb{7} & A\sb{2} + D\sb{10} + E\sb{7} & [2] &1 & A\sb{1} + A\sb{2} \\
41 & D\sb{10} + 2E\sb{7} & 2A\sb{5} + D\sb{10} & [2, 2] &0 & 0 \\
42 & D\sb{10} + 2E\sb{7} & D\sb{4} + 2E\sb{7} & [2] &2 & 2A\sb{1} \\
43 & D\sb{10} + 2E\sb{7} & A\sb{5} + D\sb{7} + E\sb{7} & [2] &1 & 0 \\
44 & 2D\sb{12} & D\sb{6} + D\sb{12} & [2] &2 & 2A\sb{1} \\
45 & 2D\sb{12} & 2D\sb{9} & [1] &2 & 0 \\
46 & 3E\sb{8} & 2A\sb{2} + 2E\sb{8} & [1] &0 & 2A\sb{2} \\
47 & 3E\sb{8} & 2E\sb{6} + E\sb{8} & [1] &0 & 0 \\
48 & D\sb{16} + E\sb{8} & 2A\sb{2} + D\sb{16} & [2] &0 & 2A\sb{2} \\
49 & D\sb{16} + E\sb{8} & D\sb{10} + E\sb{8} & [1] &2 & 2A\sb{1} \\
50 & D\sb{16} + E\sb{8} & D\sb{13} + E\sb{6} & [1] &1 & 0 \\
51 & A\sb{24} & A\sb{18} & [1] &2 & A\sb{1} \\
52 & D\sb{24} & D\sb{18} & [1] &2 & 2A\sb{1} \\
\end{array}
$$
}
\caption{Genus one fibrations on $\Xthreeone$ and $\Xthreeten$}\label{table:char3}
\end{table}
In Table~\ref{table:char2} (resp.~Table~\ref{table:char3}),
the lists  $\E(\Xtwoone)$ and $\E(\Xtwoten)$
(resp.~$\E(\Xthreeone)$ and  $\E(\Xthreeten)$) are presented.
Two lattice equivalence classes in the same row are the pair of $[\phi]\in \E(\Xpone)$
and its corresponding partner $[\phi\sprime]\in \E(\Xpten)$.
The $ADE$-type $R_{[\phi]}$ of the reducible fibers of $\phi$,
and 
the torsion ${\rm MW}_{\rm tor}$ and the rank  of the Mordell-Weil group of $\phi$
are also given.
(Recall that $\phi$ is Jacobian for any $[\phi]\in \E(\Xpone)$  by Elkies and Sch\"utt~\cite{ElkiesSchuett}.)
The meaning of the entry $R_N$ is explained in the proof of Theorems~\ref{thm:fibchar2} and~\ref{thm:fibchar3}.
\begin{proof}[Proof of Theorems~\ref{thm:fibchar2} and~\ref{thm:fibchar3}]
By Theorem~\ref{thm:nosections}, 
it is enough to calculate the $ADE$-type of $\RRR(K_{\phi}\dual (p))$
for $p=2, 3$ and $[\phi]\in \E(\Xpone)$.
The lattices $K_{\phi}$ are calculated in 
Elkies and Sch\"utt~\cite{ElkiesSchuett} and Sengupta~\cite{Sengupta}.
We put
$$
T:=\textrm{the  root lattice of type}\begin{cases}
D_4 & \textrm{if $p=2$,}\\
2A_2 & \textrm{if $p=3$}.
\end{cases}
$$
Then, for each $[\phi]\in \E(\Xpone)$,
there exist a Niemeier lattice $N_{\phi}$ and a primitive  embedding of $T$ into $N_{\phi}$
such that $K_{\phi}$ is  isomorphic to the orthogonal complement of $T$ in $N_{\phi}$.
The entry $R_N$ in Tables~\ref{table:char2} and~\ref{table:char3} 
indicates the $ADE$-type of  $\RRR(N_{\phi})$.
From a Gram matrix of $K_{\phi}$, 
we can calculate 
the $ADE$-type of 
$\RRR(K_{\phi}\dual(p))$
by the algorithm described in~\cite[Section 4]{MR2036331} or~\cite[Section 3]{shimadapreprintchar5}.
\end{proof}
\begin{corollary}
There exist no quasi-elliptic fibrations on $\Xthreeten$.
\end{corollary}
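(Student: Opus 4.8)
The plan is to combine the characterization of quasi-elliptic fibrations recalled in item (4) above (a Jacobian genus one fibration $\psi$ in characteristic $p\in\{2,3\}$ is quasi-elliptic if and only if $\gen{\RRR(K_\psi)}$ is $p$-elementary of rank $20$) with Proposition~\ref{prop:depends_only_on_latequiv}(4), which says that being quasi-elliptic depends only on the lattice equivalence class. So it suffices to decide, for each of the $52$ lattice equivalence classes $[\phi\sprime]\in\E(\Xthreeten)$, whether $[\phi\sprime]$ is quasi-elliptic, and to see that none of them is.

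First I would note that Proposition~\ref{prop:depends_only_on_latequiv}(1) and Table~\ref{table:char3} together show that only a few of the classes $[\phi\sprime]$ could possibly admit a section at all; by Theorem~\ref{thm:nosections}, whenever $\phi$ on $\Xthreeone$ admits a section (which, by Elkies--Sch\"utt, is every row), the partner $\phi\sprime$ on $\Xthreeten$ does \emph{not} admit a section. Hence no $\phi\sprime$ is Jacobian, so a fortiori none is quasi-elliptic, since a quasi-elliptic fibration is by definition Jacobian (it has a section). This already finishes the proof once one grants that ``quasi-elliptic'' includes ``Jacobian'' in the convention used in item (4). Concretely: by Proposition~\ref{prop:depends_only_on_latequiv}(4) quasi-ellipticity is a lattice-equivalence invariant, and by the argument in the proof of Theorem~\ref{thm:nosections} every representative of every class in $\E(\Xthreeten)$ fails to have a section, so no class can be quasi-elliptic.

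Alternatively, if one prefers to argue purely lattice-theoretically without invoking ``Jacobian'', one can inspect the last column of Table~\ref{table:char3}: the $ADE$-type $R_{[\phi\sprime]}$ of the reducible fibers of $\phi\sprime$ equals the $ADE$-type of $\RRR(K_\phi\dual(3))$ by Theorem~\ref{thm:nosections}. For $\phi\sprime$ to be quasi-elliptic in characteristic $3$ one would need $\gen{\RRR(K_{\phi\sprime}\dual)}$ — equivalently $\gen{\RRR(K_\phi\dual(3))}$ — to be $3$-elementary of rank $20$, and in particular one would need the root part $R_{[\phi\sprime]}$ to have total rank $20$; but every entry in the last column of Table~\ref{table:char3} is a root system of rank at most $4$ (most are $0$, $A_1$, $2A_1$, $A_2$, or $2A_2$), which is far from $20$. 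So no class is quasi-elliptic.

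The main obstacle here is essentially bookkeeping rather than mathematics: one must be careful that the convention for ``quasi-elliptic'' is the standard one (a quasi-elliptic fibration has a section, i.e.\ is a special kind of Jacobian fibration) so that Theorem~\ref{thm:nosections} applies, and one must trust the rank data in Table~\ref{table:char3}, which was produced by the algorithm of~\cite[Section 4]{MR2036331}. Given those, the statement is immediate.
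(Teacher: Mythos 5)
Your primary argument does not work, and the convention issue you flag as ``the main obstacle'' in fact resolves against you. In this paper a quasi-elliptic fibration is \emph{not} required to admit a section: the Remark immediately following this corollary exhibits a quasi-elliptic fibration on $\Xtwoten$ (No.~18 of Table~\ref{table:char2}, with $R_{[\phi\sprime]}=20A_1$), even though $\Xtwoten$ admits no Jacobian fibration at all by Corollary~\ref{cor:noJacobian}. So ``no Jacobian fibrations on $\Xthreeten$'' does not a fortiori give ``no quasi-elliptic fibrations on $\Xthreeten$''; if it did, the same reasoning would wrongly rule out the Rudakov--Shafarevich quasi-elliptic fibration on $\Xtwoten$. (This also explains why the criterion in item~(4) of Section~\ref{sec:genusone} is phrased via $\gen{f_\phi}\sperp/\gen{f_\phi}$ rather than via sections: for a quasi-elliptic fibration the relevant rank-$20$ root lattice comes from the fiber components, whose $ADE$-type is a lattice-equivalence invariant by Proposition~\ref{prop:depends_only_on_latequiv}, section or no section.)

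Your alternative argument is the correct one and is exactly what the paper intends: a quasi-elliptic fibration in characteristic $3$ must have reducible fibers whose $ADE$-type has total rank $20$ (and generates a $3$-elementary lattice), whereas by Theorem~\ref{thm:nosections} the $ADE$-type $R_{[\phi\sprime]}$ for each of the $52$ classes is that of $\RRR(K_{\phi}\dual(3))$, and every entry in the last column of Table~\ref{table:char3} has rank at most $4$. One small notational correction: since $\phi\sprime$ has no section, $K_{\phi\sprime}$ is not defined; the lattice you want is $\gen{f_{\phi\sprime}}\sperp/\gen{f_{\phi\sprime}}\cong K_{\phi}\dual(3)$. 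With that adjustment, keep only the second argument and discard the first.
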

\begin{remark}
Rudakov and Shafarevich~\cite[Section 5]{MR633161} showed  that 
there  exists a quasi-elliptic fibration on $X_{2, \sigma}$
for any $\sigma$.
The  quasi-elliptic fibration on $\Xtwoten$ (No. 18 of Table~\ref{table:char2}) 
was discovered  by Rudakov and Shafarevich~\cite[Section 4]{MR508830}.
\end{remark}
\section{Chamber decomposition of a positive cone}\label{sec:chamber}
Let $S$ be an even hyperbolic lattice,
and let $\PPP_S\subset S\tensor \R$ be a positive cone.
In this section,
we review a general method to find a set of generators of a subgroup of $\OG^+(S)$
by means of a chamber decomposition of $\PPP_S$,
which was developed by Vinberg~\cite{MR0422505}, 
Conway~\cite{MR690711} and Borcherds~\cite{MR913200}. 
%
\par
\medskip
%
Any real hyperplane in $\PPP_S$  is 
written in  the form $(v)\sperp$ by some vector $v\in S\tensor\R$
with negative norm.
We denote by $\HHH_S$ the set of  real hyperplanes in $\PPP_S$, which is canonically identified with 
%
%
$$
\set{v\in S\tensor \R}{v^2<0}/\R\sptimes.
$$
%
%
For a subset $V$ of  $\shortset{v\in S\tensor \R}{v^2<0}$,
we denote by $V\sphyp\subset \HHH_S$ the image of $V$ by $v\mapsto (v)\sperp$.
A closed subset $D$ of $\PPP_S$ is called a \emph{chamber} if
the interior $D\spcirc$  of  $D$ is non-empty and  
there exists 
a set $\Delta_D$ of vectors $v\in S\tensor \R$ with $v^2<0$ such that
$$
D=\set{x\in \PPP_S}{\intM{x, v}{}\ge 0\;\;\textrm{for all}\;\; v\in \Delta_D}.
$$
A hyperplane $(v)\sperp$ of $\PPP_S$ is called a \emph{wall} of $D$ if  
$D\spcirc\cap (v)\sperp=\emptyset$ and 
$D\cap (v)\sperp$ contains an open subset of $(v)\sperp$.
When $D$ is a chamber, we always assume that the set $\Delta_D$ is minimal in the sense
that, for any $v\in\Delta_D$, there exists a point $x\in \PPP_S$ such that $\intM{x, v}{}<0$ and
$\intM{x, v\sprime}{}\ge 0$ for any $v\sprime \in \Delta_D\setminus \{v\}$,
that is, the projection $\Delta_D\to \Delta_D^*$ is bijective and 
every  hyperplane $(v)\sperp\in \Delta_D\sphyp$ is a wall of  $D$.
\par
\medskip
For a chamber $D$, we put
$$
\aut(D):=\set{g\in \OG^+(S)}{D^g=D}.
$$
A chamber $D$ is said to be \emph{fundamental} if the following hold:
\begin{itemize}
\item[(i)]  $\PPP_S$ is the union of all $D^g$, where $g$ runs through $\OG^+(S)$, and 
\item[(ii)]  if $D\spcirc \cap D^g\ne \emptyset$, then $g\in \aut(D)$.
\end{itemize}
Let $\FFF$ be a family of hyperplanes  in $\PPP_S$ with the following properties:
\begin{itemize}
\item[(a)]  $\FFF$  is locally finite in $\PPP_S$ , and 
\item[(b)]  $\FFF$ is invariant under the action of  $\OG^+(S)$ on $\HHH_S$.
\end{itemize}
Then the closure  of each connected component of
$$
\PPP_S\setminus \bigcup_{\FFF}  \;(v)\sperp
$$
is a chamber, 
which we call an \emph{$\FFF$-chamber}.
\par
\medskip
Suppose that $D$ is an $\FFF$-chamber.
Then
$D^g$ is also an $\FFF$-chamber for any $g\in \OG^+(S)$ by the property (b) of $\FFF$,  and
$D$ satisfies the property (ii) in the definition of fundamental   chambers.
Moreover, $D$ satisfies the property (i) if and only if 
every $\FFF$-chamber is equal to  $D^g$ for some $g\in \OG^+(S)$.

For each wall $(v)\sperp\in \Delta_D\sphyp$ of an $\FFF$-chamber $D$, there exists a unique $\FFF$-chamber $D\sprime$
distinct from $D$
such that $D\cap D\sprime\cap (v)\sperp$ contains an open subset of $(v)\sperp$.
We say that $D\sprime$ is \emph{adjacent to $D$ along $(v)\sperp$},
and that $(v)\sperp$ is the  \emph{wall between the adjacent chambers $D$ and $D\sprime$}.
\begin{proposition}
An $\FFF$-chamber $D$ is fundamental
if and only if, for each $v\in \Delta_{D}$, 
there exists $g_v\in \OG^+(S)$ such that $D^{g_v}$ is adjacent to $D$ along $(v)\sperp$.
\end{proposition}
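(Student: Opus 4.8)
The plan is to prove both implications directly from the definitions of $\FFF$-chamber and fundamental chamber. Throughout, write $D$ for the given $\FFF$-chamber with minimal defining set $\Delta_D$, so that every hyperplane $(v)\sperp$ with $v\in\Delta_D$ is a wall of $D$, and for each such wall there is a unique $\FFF$-chamber $D\sprime\ne D$ adjacent to $D$ along $(v)\sperp$, as recorded in the paragraph preceding the statement. Recall also that, by the properties (a), (b) of $\FFF$, the chamber $D^g$ is again an $\FFF$-chamber for every $g\in\OG^+(S)$, and that $D$ automatically satisfies property (ii) in the definition of a fundamental chamber; hence the only thing at stake is property (i), namely that $\PPP_S=\bigcup_{g\in\OG^+(S)}D^g$, equivalently that every $\FFF$-chamber is of the form $D^g$.

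First I would do the easy direction $(\Rightarrow)$. If $D$ is fundamental, then for each $v\in\Delta_D$ the chamber $D\sprime$ adjacent to $D$ along $(v)\sperp$ is an $\FFF$-chamber, so by property (i) there is $g_v\in\OG^+(S)$ with $D^{g_v}=D\sprime$, which is exactly the required conclusion. For the substantive direction $(\Leftarrow)$, assume that for every $v\in\Delta_D$ there exists $g_v\in\OG^+(S)$ carrying $D$ to the $\FFF$-chamber adjacent to $D$ along $(v)\sperp$. Let $G\le\OG^+(S)$ be the subgroup generated by $\aut(D)$ together with all the $g_v$ (or just by the $g_v$, adjusting by elements of $\aut(D)$; $\aut(D)$ fixes $D$ so it does not matter). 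I want to show $\bigcup_{g\in G}D^g=\PPP_S$, which gives property (i) with $G$ in place of all of $\OG^+(S)$, hence a fortiori property (i) for $\OG^+(S)$. The standard argument is a connectedness/reachability argument in the ``dual graph'' of $\FFF$-chambers: define a walk from $D$ to an arbitrary $\FFF$-chamber $D^*$ by choosing a generic point $x^*$ in the interior of $D^*$ and a generic point $x_0$ in the interior of $D$, joining them by a path in $\PPP_S$ that meets the locally finite family $\FFF$ transversally in finitely many points, none lying on a codimension-$\ge 2$ intersection; this path crosses a finite sequence of $\FFF$-chambers $D=D_0,D_1,\dots,D_n=D^*$, consecutive ones adjacent along a wall. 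One then shows by induction on $n$ that each $D_i$ lies in the $G$-orbit of $D$: if $D_i=D^g$ with $g\in G$, then $D_{i+1}$ is adjacent to $D_i=D^g$ along some wall; pulling back by $g\inv$, the chamber $D_{i+1}^{g\inv}$ is adjacent to $D$ along a wall $(v)\sperp$ with $v\in\Delta_D$ (since $g\inv$ preserves $\FFF$-chambers and walls, and $\Delta_D$ enumerates all walls of $D$), hence by hypothesis $D_{i+1}^{g\inv}=D^{g_v}$, i.e. $D_{i+1}=D^{g_v g}$ with $g_v g\in G$. Taking $i=n$ shows $D^*$ is in the $G$-orbit of $D$, so $\bigcup_{g\in G}D^g$ contains every $\FFF$-chamber and therefore equals $\PPP_S$.

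The main obstacle is the genericity/transversality step in the $(\Leftarrow)$ direction: one must know that a path between interior points of two $\FFF$-chambers can be chosen to cross the family $\FFF$ only through the interiors of walls, so that the crossing sequence is a genuine chain of pairwise-adjacent $\FFF$-chambers and the induction has something to bite on. This uses local finiteness of $\FFF$ (property (a)) to ensure that only finitely many hyperplanes are met and that the ``bad'' set — points lying on two or more members of $\FFF$ — has codimension $\ge 2$ in $\PPP_S$, so a small perturbation of the path avoids it; the convexity of $\PPP_S$ (it is an open convex cone) makes such perturbations available. The remaining points — that $D^g$ is an $\FFF$-chamber, that walls are permuted by $\OG^+(S)$, that $\Delta_D$ lists exactly the walls of $D$ — are immediate from the setup in this section and require no further work.
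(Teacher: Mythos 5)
Your proof is correct and follows essentially the same route as the paper's: the forward direction is immediate from property (i), and the converse is the same induction along a finite chain of pairwise-adjacent $\FFF$-chambers, transporting each crossing wall back to a wall $(v)\sperp$ of $D$ and composing with $g_v$. The only difference is that you spell out the path-perturbation argument producing the chain, which the paper compresses into the single remark that local finiteness of $\FFF$ guarantees such a chain exists.
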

\begin{proof}
The `only if\,' part is obvious.
We prove the `if\,' part.
It is enough to show that, for 
an arbitrary $\FFF$-chamber $D\sprime$,
there exists $g\in \OG^+(S)$ such that $D\sprime=D^g$.
Since the family $\FFF$ of hyperplanes is locally finite in $\PPP_S$,
there exists a finite chain of $\FFF$-chambers $D_0=D, D_1, \dots, D_N=D\sprime$
such that $D_{i}$ and $D_{i+1}$ are adjacent.
We show, by induction on $N$,  that there exists a sequence 
of vectors $v_1, \dots, v_N$ in $\Delta_D$
such that $D_i=D^{g_{v_i} \cdots g_{v_1}}$ holds for $i=1, \dots, N$.
The case $N=0$ is trivial.
Suppose that $N>0$.
Let $(w)\sperp$ be the wall between $D_{N-1}$ and $D_N$,
and let $v_N\in \Delta_D$ be the vector such  that
the wall $(v_N)\sperp$ of $D$ is mapped to the wall $(w)\sperp$ of $D_{N-1}$ by
$g_{v_{N-1}} \cdots g_{v_{1}}$.
Then we have 
$D_N=D^{g_{v_N} \cdots g_{v_1}}$.
\end{proof} 
\begin{remark}
If an $\FFF$-chamber  is fundamental, then any $\FFF$-chamber  is fundamental.
\end{remark}
Let $\GGG$ be a subset of $\FFF$ that is invariant under the action of $\OG^+(S)$.
Then $\GGG$ is locally finite, and any $\GGG$-chamber is a union of $\FFF$-chambers.
If an $\FFF$-chamber is fundamental,
then any $\GGG$-chamber is also fundamental.
%
\begin{proposition}\label{prop:FFFGGG}
Let $D$ be an $\FFF$-chamber and  
let $C$ be a $\GGG$-chamber  such that
$D\subset C$.
Suppose that  $D$ is fundamental.
For $v\in \Delta_D$, 
let $g_v\in \OG^+(S)$ be an isometry such that $D^{g_v}$ is adjacent to $D$ along $(v)\sperp$.
We put 
$$
\Gamma:=\set{g_v}{v\in \Delta_D, (v)\sperp\notin \GGG}.
$$
Then $\aut(C)$
is generated by $\aut(D)$ and $\Gamma$.
\end{proposition}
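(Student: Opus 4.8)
\textbf{Proof plan for Proposition~\ref{prop:FFFGGG}.}

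The plan is to establish the two inclusions $\gen{\aut(D),\Gamma}\subset\aut(C)$ and $\aut(C)\subset\gen{\aut(D),\Gamma}$ separately. For the first inclusion, I would argue that each generator preserves $C$. Since $\aut(D)$ stabilizes $D$ and every $\GGG$-chamber containing $D$ is determined by $D$ (as $C$ is the union of the $\FFF$-chambers reachable from $D$ without crossing a wall in $\GGG$), any $h\in\aut(D)$ satisfies $C^h=C$. For $g_v\in\Gamma$, the wall $(v)\sperp$ lies in $\FFF\setminus\GGG$, so passing from $D$ to the adjacent $\FFF$-chamber $D^{g_v}$ does not cross any hyperplane of $\GGG$; hence $D^{g_v}\subset C$, and since $g_v\in\OG^+(S)$ maps the $\GGG$-chamber $C$ to some $\GGG$-chamber whose interior meets $C\spcirc$, we get $C^{g_v}=C$. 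Thus $\gen{\aut(D),\Gamma}\subset\aut(C)$.

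The substantive direction is $\aut(C)\subset\gen{\aut(D),\Gamma}$. Let $g\in\aut(C)$. Then $D^g$ is an $\FFF$-chamber contained in $C^g=C$. The key point is to move $D^g$ back to $D$ through a chain of adjacent $\FFF$-chambers that \emph{stays inside $C$}, i.e. never crosses a wall lying in $\GGG$. Since $\FFF$ is locally finite and $C$ (being a $\GGG$-chamber whose interior is connected) is a union of $\FFF$-chambers any two of which can be joined by a chain of adjacent $\FFF$-chambers within $C$, such a chain $D_0=D,\,D_1,\dots,D_N=D^g$ exists with each wall between $D_i$ and $D_{i+1}$ lying in $\FFF\setminus\GGG$. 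Now I would run the same induction as in the proof of the previous proposition: using fundamentality of $D$, build vectors $v_1,\dots,v_N\in\Delta_D$ with $D_i=D^{g_{v_i}\cdots g_{v_1}}$; the point is that at step $i$ the relevant wall between $D_{i-1}$ and $D_i$ is the image under $g_{v_{i-1}}\cdots g_{v_1}$ of a wall $(v_i)\sperp$ of $D$, and because walls of $\GGG$ are carried to walls of $\GGG$ by $\OG^+(S)$ (as $\GGG$ is $\OG^+(S)$-invariant), the hypothesis that the wall lies outside $\GGG$ forces $(v_i)\sperp\notin\GGG$, so $g_{v_i}\in\Gamma$. Setting $h:=g_{v_N}\cdots g_{v_1}\in\gen{\Gamma}$, we obtain $D^h=D^g$, hence $gh\inv\in\aut(D)$, and therefore $g\in\aut(D)\cdot\gen{\Gamma}\subset\gen{\aut(D),\Gamma}$.

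The main obstacle is the claim that the chain from $D$ to $D^g$ can be chosen inside $C$ with all crossed walls avoiding $\GGG$: this requires knowing that the interior $C\spcirc$ is connected (so that a generic path inside it, avoiding codimension-$2$ intersections of hyperplanes of $\FFF$, yields the desired chain) and that crossing such a wall from one $\FFF$-chamber in $C$ to an adjacent one never exits $C$ — equivalently, that a wall of an $\FFF$-chamber $D_i\subset C$ which separates two sub-chambers of $C$ cannot lie in $\GGG$, since walls in $\GGG$ are precisely the walls of the $\GGG$-chamber $C$ itself. Once this topological fact is pinned down, the identification of the $g_{v_i}$ with elements of $\Gamma$ via $\OG^+(S)$-invariance of $\GGG$ is routine, and the induction is the same bookkeeping as before.
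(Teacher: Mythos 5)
Your proof follows essentially the same route as the paper: the easy inclusion $\gen{\aut(D),\Gamma}\subset\aut(C)$ via the walls not in $\GGG$, and the reverse inclusion by chaining adjacent $\FFF$-chambers inside $C$ from $D$ to $D^g$ and running the same induction as for fundamentality, using the $\OG^+(S)$-invariance of $\GGG$ to see each step uses an element of $\Gamma$. The only difference is that you explicitly justify the existence of the chain inside $C$ (connectedness of $C\spcirc$ and the fact that a wall between two adjacent $\FFF$-chambers both lying in $C$ cannot belong to $\GGG$), a point the paper simply asserts; your justification is correct.
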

\begin{proof}
If $g_v\in \Gamma$, then $D^{g_v}$ is contained in $C$
because the wall $(v)\sperp$ between $D$ and $D^{g_v}$ does not belong to  $\GGG$,
and hence $g_v\in \aut(C)$.
Therefore 
the subgroup $\gen{\aut(D), \Gamma}$ of $\OG^+(S)$ generated by $\aut(D)$ and $\Gamma$ is contained in 
$\aut(C)$.
To prove $\aut(C)\subset \gen{\aut(D), \Gamma}$,
it is enough to show that,
for any $g\in \aut(C)$, there exists a sequence 
$\gamma_1, \dots, \gamma_N$ of elements of $\Gamma$ such that 
$D^{g}=D^{\gamma_N\cdots \gamma_1}$.
There exists a sequence of $\FFF$-chambers $D_0=D, D_1, \dots, D_N=D^g$
such that each $D_i$ is contained in $C$ and that $D_{i+1}$ is adjacent to $D_i$ 
for $i=0, \dots, N-1$.
Suppose that we have constructed $\gamma_1, \dots, \gamma_i \in \Gamma$
such that $D_i=D^{\gamma_i\dots \gamma_1}$ holds.
The wall $(w)\sperp$  between $D_{i}$ and  $D_{i+1}$ does not belong to $\GGG$.
Let $v_{i+1}$ be an element of $\Delta_D$ such that the wall
$(v_{i+1})\sperp$ of $D$  is mapped to 
the wall $(w)\sperp$ of $D_i$ by $\gamma_i\dots \gamma_1$.
Since $\GGG$ is  invariant under the action of $\OG^+(S)$,
we have $(v_{i+1})\sperp\notin \GGG$ and hence
$\gamma_{i+1}:=g_{v_{i+1}}$
is an element of  $\Gamma$. Then 
$D_{i+1}=D^{\gamma_{i+1}\gamma_i\cdots \gamma_1}$ holds.
\end{proof}
\begin{remark}
Let $D$ and $C$ be as in Proposition~\ref{prop:FFFGGG}.
Let $v$ and $v\sprime$ be elements of $\Delta_D$.
Suppose that the wall $(v)\sperp$ of $D$ is mapped to the wall $(v\sprime)\sperp$  of $D$
by $h\in \aut(D)$.
Then $D^{h g_{v\sprime} h\inv}$
is adjacent to $D$ along $(v)\sperp$ .
Let $\Delta\sprime_D$ be a subset of $\Delta_D$ such that
 the subset $\Delta\sp{\prime *}_D$ of $\Delta\sphyp_D$ is a complete set of representatives of 
 the orbit decomposition of $\Delta\sphyp_D$ by the action of $\aut(D)$.
 Then  $\aut(C)$
is generated by $\aut(D)$ and $\shortset{g_v}{v\in \Delta\sprime_D, (v)\sperp\notin \GGG}$.
\end{remark}
Considering the case  $\GGG=\emptyset$,
we obtain the following:
\begin{corollary}
Let $D$ be an $\FFF$-chamber.
If $D$ is fundamental,
then $\OG^+(S)$ is generated by $\aut(D)$ and the isometries $g_v$
that map $D$ to its adjacent chambers.
\end{corollary}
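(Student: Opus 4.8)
The plan is to obtain this Corollary as the special case $\GGG=\emptyset$ of Proposition~\ref{prop:FFFGGG}. First I would check that $\GGG=\emptyset$ is a legitimate choice: the empty family of hyperplanes is a subset of $\FFF$, is vacuously locally finite in $\PPP_S$, and is vacuously invariant under the action of $\OG^+(S)$ on $\HHH_S$, so it meets all the requirements imposed on $\GGG$ in the paragraph introducing Proposition~\ref{prop:FFFGGG}.

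Next I would identify the unique $\GGG$-chamber for this choice. Since $\bigcup_{\GGG}(v)\sperp=\emptyset$, we have $\PPP_S\setminus\bigcup_{\GGG}(v)\sperp=\PPP_S$, and $\PPP_S$ is connected; hence the only $\GGG$-chamber is $C:=\PPP_S$ itself, i.e.\ the chamber with $\Delta_C=\emptyset$. By the very definition of $\OG^+(S)$ as the subgroup of $\OG(S)$ stabilizing $\PPP_S$, we get $\aut(C)=\{g\in\OG^+(S)\mid \PPP_S^g=\PPP_S\}=\OG^+(S)$. Moreover the inclusion $D\subset C$ holds automatically, and $D$ is fundamental by hypothesis, so all the hypotheses of Proposition~\ref{prop:FFFGGG} are satisfied with this $D$ and $C$.

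Finally I would unwind the conclusion. Because $\GGG=\emptyset$, the condition $(v)\sperp\notin\GGG$ is satisfied by every $v\in\Delta_D$, so $\Gamma=\{g_v\mid v\in\Delta_D,\ (v)\sperp\notin\GGG\}$ is exactly $\{g_v\mid v\in\Delta_D\}$, i.e.\ the collection of isometries carrying $D$ onto each of its adjacent chambers, one for every wall of $D$. Proposition~\ref{prop:FFFGGG} then yields $\OG^+(S)=\aut(C)=\langle\aut(D),\Gamma\rangle$, which is precisely the assertion of the Corollary. There is no real obstacle here; the only points that need attention are the degenerate-case verifications just described — that $\emptyset$ is an admissible subfamily, that the associated $\GGG$-chamber is all of $\PPP_S$, and that $\aut(\PPP_S)=\OG^+(S)$ — after which the statement is immediate.
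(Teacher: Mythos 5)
Your proposal is correct and is exactly the paper's argument: the paper derives this corollary by "considering the case $\GGG=\emptyset$" in Proposition~\ref{prop:FFFGGG}, and your verifications that the empty family is admissible, that the unique $\GGG$-chamber is $\PPP_S$ with $\aut(\PPP_S)=\OG^+(S)$, and that $\Gamma$ becomes the full set of the $g_v$ are precisely the (routine) details the paper leaves implicit.
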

\begin{example}\label{example:ref}
Recall that  
$W(S)\subset \OG^+(S)$ is the subgroup generated by $\shortset{s_r}{r\in \RRR(S)}$.
Any $\RRR(S)\sphyp$-chamber  is fundamental,
because  
every $r\in \RRR(S)$ defines a reflection $s_r$.
It follows that  $\OG^+(S)$ is equal  to the semi-direct product of $W(S)$ and 
the automorphism group $\aut(D)$ of an $\RRR(S)\sphyp$-chamber $D$.
In particular,
we have 
$$
\aut(D)\cong \OG^+(S)/W(S).
$$
\end{example}
Let $L$ be an even \emph{unimodular} hyperbolic lattice,
and  let $\iota: S \inj L$ be a primitive embedding.
Let $\PPP_L$ be the positive cone of $L$  that contains $\iota(\PPP_S)$.
We denote by $\Tiota$ the orthogonal complement of $S$ in $L$,
and by
$$
v\mapsto v_S
$$
the orthogonal projection $L\tensor\R\to S\tensor\R$ .
Since  $L$ is a submodule of $S\dual\oplus \Tiota\dual$,
the image of $L$ by $v\mapsto v_S$ is contained in $S\dual$.
We assume the following:
\begin{equation}\label{eq:hypothesisT}\textrm{the natural homomorphism $\OG(\Tiota)\to \OG(q_{\Tiota})$ is surjective.}
\end{equation}
Then we have the following:
\begin{proposition}\label{prop:lift}
For any $g\in \OG^+(S)$, 
there exists $\tilde{g}\in \OG^+(L)$
such that $\iota(v^g)=\iota(v)^{\tilde{g}}$ holds for any $v\in S\tensor \R$. 
\end{proposition}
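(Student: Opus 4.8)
\textbf{Proof plan for Proposition~\ref{prop:lift}.}
The plan is to use the standard gluing description of an even unimodular overlattice $L$ of $S \oplus \Tiota$ and extend a given isometry $g \in \OG^+(S)$ by a suitable isometry of $\Tiota$. First I would recall that, since $\iota: S \inj L$ is a primitive embedding into an even unimodular lattice, there is a canonical isomorphism $\gamma: q_S \isom -q_{\Tiota}$ of discriminant forms such that $L$ is recovered from $S \oplus \Tiota$ as the preimage, under $S\dual \oplus \Tiota\dual \to A_S \oplus A_{\Tiota}$, of the graph of $\gamma$. In particular an element $(s, t) \in S\dual \oplus \Tiota\dual$ lies in $L$ if and only if $\bar{s} \in A_S$ and $\bar{t} \in A_{\Tiota}$ correspond under $\gamma$.

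The key step is the following: given $g \in \OG^+(S)$, it induces $\bar{g} \in \OG(q_S)$. Using the hypothesis~\eqref{eq:hypothesisT} that $\OG(\Tiota) \to \OG(q_{\Tiota})$ is surjective, and transporting $\bar{g}$ across $\gamma$ (i.e. considering $\gamma \circ \bar{g} \circ \gamma\inv \in \OG(q_{\Tiota})$, which preserves $q_{\Tiota}$ since $\bar g$ preserves $q_S$), I can choose $h \in \OG(\Tiota)$ whose image in $\OG(q_{\Tiota})$ equals $\gamma \circ \bar{g}\inv \circ \gamma\inv$ — the precise inverse/sign bookkeeping to be fixed so that the next compatibility works out. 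Then $g \oplus h$ is an isometry of $S \oplus \Tiota$, and I claim it extends to an isometry $\tilde{g}$ of $L$: for $(s, t) \in L$ we have $\gamma(\bar{s}) = \bar{t}$, and applying $g \oplus h$ gives $(g(s), h(t))$ with $\overline{g(s)} = \bar{g}(\bar s)$ and $\overline{h(t)} = (\gamma \circ \bar g\inv \circ \gamma\inv)(\bar t) = (\gamma\circ \bar g\inv)(\bar s)$; choosing instead $h$ to induce $\gamma \circ \bar g \circ \gamma\inv$ one checks $\gamma(\overline{g(s)}) = \overline{h(t)}$, so $(g(s), h(t)) \in L$. Hence $g \oplus h$ maps $L$ into $L$, and the same argument applied to $g\inv \oplus h\inv$ shows it is an isometry $\tilde g$ of $L$ with $\tilde g|_S = g$, which gives $\iota(v^g) = \iota(v)^{\tilde g}$ for all $v \in S\tensor\R$.

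Finally I would arrange the sign so that $\tilde{g} \in \OG^+(L)$ rather than merely $\OG(L)$: since $\PPP_L$ was chosen to contain $\iota(\PPP_S)$ and $\tilde g$ restricts on $S\tensor\R$ to $g$, which preserves $\PPP_S$, the extension $\tilde g$ already preserves $\PPP_L$; if not, replace $\tilde g$ by $-\tilde g$, noting $-1 \notin \OG^+(L)$, but then $-\tilde g$ would act as $-g$ on $S$, contradicting that $g$ preserves $\PPP_S$ — so in fact $\tilde g \in \OG^+(L)$ automatically. The main obstacle is the bookkeeping in the middle step: getting the directions of $\gamma$, the inverse, and the sign right so that the gluing condition is genuinely preserved; everything else is the routine discriminant-form formalism of Nikulin, and the hypothesis~\eqref{eq:hypothesisT} is exactly what removes the only real constraint, namely the existence of the companion isometry $h$ on the orthogonal complement.
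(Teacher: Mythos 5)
Your proposal is correct: the gluing argument via the anti-isometry $\gamma\colon q_S\isom -q_{T_\iota}$, the choice of $h\in\OG(T_\iota)$ inducing $\gamma\circ\bar g\circ\gamma\inv$ via hypothesis~\eqref{eq:hypothesisT}, and the observation that $\tilde g$ automatically preserves $\PPP_L$ because it fixes $\iota(\PPP_S)\subset\PPP_L$ are all sound. The paper simply cites Nikulin's Proposition 1.6.1 here, and what you have written out is precisely the standard proof of that cited result, so the approach is essentially the same.
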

\begin{proof}
See Nikulin~\cite[Proposition 1.6.1]{MR525944}. 
\end{proof}
%
A hyperplane $(r)\sperp$ of $\PPP_L$ defined by a $(-2)$-vector 
 $r\in \RRR(L)$ intersects $\iota(\PPP_S)$
if and only if $r_S^2<0$.
We put
$$
\RRR(L,  \iota):=\set{r_S}{r\in \RRR(L)\;\;\textrm{and}\;\; r_S^2<0}\;\;\subset\;\; S\dual.
$$
Since $\Tiota$ is negative definite,
we have $-2\le r_S^2$ for any $r\in \RRR(L)$.
Since $S\dual$ is discrete in $S\tensor \R$,
the family of hyperplanes $\RRR(L,  \iota)\sphyp$ is locally finite in $\PPP_S$.
By Proposition~\ref{prop:lift},
if $r\in \RRR(L)$ satisfies $r_S\in \RRR(L,  \iota)$,
then, for any $g\in \OG^+(S)$,
we have $r_S^g=(r^{\tilde{g}})_S \in \RRR(L,  \iota)$.
Therefore $\RRR(L,  \iota)$ is invariant under the action of $\OG^+(S)$.
Note that $\RRR(L,  \iota)$ contains $\RRR(S)$,
and that $\RRR(S)$ is obviously  invariant under the action of $\OG^+(S)$.
Therefore, by Proposition~\ref{prop:FFFGGG},
we can obtain a set of generators of the automorphism group
$\aut(C)$ of an  $\RRR(S)\sphyp$-chamber $C$ 
if we 
find an $\RRR(L,  \iota)\sphyp$-chamber $D$ contained in $C$, 
show that $D$ is fundamental, 
calculate the group $\aut(D)$,
and find   isometries of $S$ that map $D$
to its adjacent chambers.
%
\par
\medskip
Let $\Lts$ denote an even hyperbolic unimodular lattice of rank $26$,
which is unique up to isomorphisms.
The walls of  an $\RRR(\Lts)\sphyp$-chamber $\DDD\subset \Lts\tensor\R$
and the group $\aut(\DDD)\subset \OG^+(\Lts)$ were determined by Conway~\cite{MR690711}.
Then Borcherds~\cite{MR913200}
determined the structure  of $\OG^+(S)$  
for  some even hyperbolic lattices $S$ of rank $<26$ 
by  embedding  $S$ into $\Lts$ in such a way that 
$\Tiota$ is a  root lattice.
\par
\medskip
Kondo~\cite{MR1618132} first applied the Borcherds method to the study of the automorphism group 
of a generic Jacobian Kummer surface.
Later
Dolgachev and Kondo~\cite{MR1935564} applied the Conway-Borcherds method to the study of the automorphism group of $\Xtwoone$,
and 
Kondo and Shimada~\cite{KondoShimada} applied it to $\Xthreeone$.
\par
\medskip
%
%
We say that an even hyperbolic lattice $S$ is \emph{$2$-reflective} if the index of $W(S)$ in $\OG^+(S)$ is finite,
or equivalently,
if the automorphism group of an $\RRR(S)\sphyp$-chamber  is finite (see Example~\ref{example:ref}).
%
Nikulin~\cite{MR633160} classified all $2$-reflective lattices  of rank $\ge 5$.
It turns out that there are no $2$-reflective lattices of rank $>19$.
%
%
%
\par
\medskip
Let $Y$ be a $K3$ surface
with the N\'eron-Severi lattice $S_Y$
and the positive cone $\PPP(Y)$ containing an ample class.
Then
the closed subset  $\NE\spcirc(Y)=\NE(Y)\cap \PPP(Y)$  of $\PPP(Y)$ is an $\RRR(S_Y)\sphyp$-chamber
by Proposition~\ref{prop:nef}(1),
and hence we have 
$$
\aut(\NE(Y))=\aut(\NE\spcirc(Y))\cong \OG^+(S_Y)/W(S_Y).
$$
Therefore 
$\aut(\NE(\Xps))$ is  infinite for any supersingular $K3$ surface $\Xps$.
\erase{
Combining this fact with Nikulin's classification of $2$-reflective lattices, we obtain the following:
\begin{corollary}\label{cor:infiniteautNE}
For any supersingular $K3$ surface $\Xps$,
the group $\aut(\NE(\Xps))$ is  infinite.
\end{corollary}
}
\section{The groups $\aut(\NE(\Xtwoten))$ and $\aut(\NE(\Xthreeten))$}\label{sec:autNE23}
\subsection{The group $\aut(\NE(\Xtwoten))$}\label{subsec:AutNEchar2}
By Lemma~\ref{lem:main}, the result of Dolgachev and Kondo~\cite{MR1935564},
and the method of the previous section, 
we obtain a set of generators of $\aut(\NE(\Xtwoten))$.
First we recall the results of~\cite{MR1935564}.
As a projective model of $\Xtwoone$,  
we consider   the minimal resolution $X$ of the inseparable double cover
$Y\to \P^2$  of $\P^2$
defined by 
$$
w^2=x_0x_1x_2(x_0^3 + x_1^3 +x_2^3).
$$
Note that the projective plane $\P^2(\F_4)$ defined over $\F_4$
contains $21$ points $p_1, \dots , p_{21}$ and $21$ lines $\ell_1, \dots , \ell_{21}$.  
The  inseparable double cover $Y$ has $21$ ordinary nodes
over the $21$ points in $\P^2(\F_4)$ and hence $X$ has $21$ disjoint  $(-2)$-curves.
We denote by 
$e_1, \dots , e_{21}\in \Stwoone$ the classes of these $(-2)$-curves,
by $h\in  \Stwoone$ the class of 
the pullback of a line on $\P^2$,
 and  by $f_1, \dots , f_{21}\in \Stwoone$ 
the classes of the proper transforms of the $21$ lines in $\P^2(\F_4)$.  
Then $\Stwoone$ is generated by the $(-2)$-vectors $e_1, \dots , e_{21}, f_1, \dots , f_{21}$.
The vector
$$
w_M := \frac{1}{3}\sum_{i=1}^{21} (e_i + f_i)
$$
has the property
$$
w_M\in S_X, \,\,  w_M^2 = 14, \,\, \intM{w_M, e_i}{}  =\intM{w_M, f_i}{}  =1.
$$
The complete linear system associated with the line bundle corresponding to $w_M$
defines an embedding of $X$ into $\P^2\times \P^2$, and its image $X_M\subset \P^2\times \P^2$ is defined by
$$
\begin{cases}
x_0 y_0^2+x_1 y_1^2+x_2 y_2^2\;\;=\;\;0, \\
x_0^2 y_0+x_1^2 y_1+x_2^2 y_2\;\;=\;\;0.
\end{cases}
$$
Six points on $\P^2(\F_4)$ are said to be  \emph{general} if no three points of them are
collinear.  There exist $168$ sets of general six points in $\P^2(\F_4)$.
If $I=\{p_{i_1}, \dots, p_{i_6}\}$ is a set of general six points, 
then the $(-1)$-vector
$$
c_{I}:=h - {1\over 2}(e_{i_1} + \cdots + e_{i_6})
$$
is contained in $\Stwoone\dual$.  
Note that 
each $c_{I}$ defines a reflection 
$$
 x\mapsto x+2\intM{x, c_I}{}c_I
$$
in $\OG^+(\Stwoone)$ because
$c_{I} \in  \Stwoone\dual$.
Let $\PPP(\Xtwoone)$ be the positive cone of $\Stwoone$ containing an ample class.
and 
let
$\Delta(\Xtwoone)$ be the set consisting of  $e_1, \dots , e_{21}, f_1, \dots , f_{21}$ and
the $(-1)$-vectors $c_{I}$ defined above. 
We define a chamber $D(\Xtwoone)$ in $P(\Xtwoone)$ by 
\begin{equation*}\label{fdomain}
D(\Xtwoone) := \set{x \in P(\Xtwoone)}{\intM{x, v}{}\ge0\;\;\textrm{for all}\;\; v\in \Delta(\Xtwoone)}.
\end{equation*}
Then, for each $v\in \Delta(\Xtwoone)$, the hyperplane $(v)\sperp$ is a wall of $D(\Xtwoone)$.
Moreover the ample class $w_M$ is contained in the interior of $D(\Xtwoone) $.
Recall that 
 $\Lts$ is the even unimodular hyperbolic lattice of rank $26$.
There exists a primitive embedding $\iota : \Stwoone\inj \Lts$,
which is  unique up to $\OG(\Lts)$.
The orthogonal complement 
$\Tiota$ of $\Stwoone$ in $\Lts$ is isomorphic to the root lattice of type $D_4$, and hence
 satisfies the hypothesis~\eqref{eq:hypothesisT}.
\begin{proposition}
The chamber $D(\Xtwoone)$ is an $\RRR(\Lts, \iota)\sphyp$-chamber
contained in the $\RRR(\Stwoone)\sphyp$-chamber $\NE\spcirc(\Xtwoone)$.
An isometry $g\in \OG^+(\Stwoone)$ belongs to  $\aut(D(\Xtwoone))$ if and only if
$w_M^g=w_M$.
\end{proposition}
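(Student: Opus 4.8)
The plan is to read the proposition off from the chamber formalism of Section~\ref{sec:chamber} applied to the primitive embedding $\iota:\Stwoone\inj\Lts$ with $\Tiota\cong D_4$, in the manner of Dolgachev--Kondo~\cite{MR1935564}. Since $\Tiota\cong D_4$ satisfies the hypothesis~\eqref{eq:hypothesisT}, Proposition~\ref{prop:lift} applies, so every element of $\OG^+(\Stwoone)$ lifts to $\OG^+(\Lts)$ and the family $\RRR(\Lts,\iota)\sphyp$ is invariant under $\OG^+(\Stwoone)$; recall also from Section~\ref{sec:chamber} that $\RRR(\Stwoone)\subset\RRR(\Lts,\iota)$. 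The first thing I would check is that $\Delta(\Xtwoone)\subset\RRR(\Lts,\iota)$, so that every wall of $D(\Xtwoone)$ is a hyperplane of $\RRR(\Lts,\iota)\sphyp$. Each $e_i$ and $f_i$ lies in $\Stwoone\subset\Lts$ and has norm $-2$, hence is a $(-2)$-vector of $\Lts$ whose orthogonal projection to $\Stwoone$ is itself, of negative norm. For $c_I=h-\frac12(e_{i_1}+\dots+e_{i_6})$ one has $c_I^2=-1$, so the class of $c_I$ in $A_{\Stwoone}$ carries the $q_{\Stwoone}$-value $1\bmod 2\Z$; under the anti-isometry $A_{\Stwoone}\isom A_{\Tiota}$ gluing $\Stwoone\oplus\Tiota$ to $\Lts$ it corresponds to a class of $A_{D_4}$ on which $q_{\Tiota}$ takes the value $-1\bmod2\Z$, and a minimal representative $t_0\in\Tiota\dual$ of that class has $t_0^2=-1$; then $r:=c_I+t_0\in\Lts$, $r^2=-2$, and $r_{\Stwoone}=c_I$ has negative norm, so $c_I\in\RRR(\Lts,\iota)$. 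Thus $\Delta(\Xtwoone)\sphyp\subset\RRR(\Lts,\iota)\sphyp$.

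The main step — and, I expect, the only genuine obstacle — is the converse: no hyperplane of $\RRR(\Lts,\iota)\sphyp$ meets the interior of $D(\Xtwoone)$, so that $D(\Xtwoone)$ is exactly an $\RRR(\Lts,\iota)\sphyp$-chamber (its interior being a connected component of the complement of that family) and, in particular, $w_M$ lies on no such hyperplane. This requires identifying $\RRR(\Lts,\iota)$ concretely — equivalently, enumerating the $\OG^+(\Stwoone)$-orbits of $(-2)$-vectors $r\in\Lts$ with $-2\le r_{\Stwoone}^2<0$ through the glue between $\Stwoone\dual$ and $D_4\dual$ — and checking, for one $r$ in each orbit, that $(r_{\Stwoone})\sperp$ is either one of the $210$ walls in $\Delta(\Xtwoone)\sphyp$ or disjoint from the interior $D(\Xtwoone)\spcirc$, e.g. via a lower bound on $|\intM{w_M,r_{\Stwoone}}{}|$ using $w_M^2=14$ and $\intM{w_M,e_i}{}=\intM{w_M,f_i}{}=1$. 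This is precisely the wall computation of~\cite{MR1935564}, which I would quote. Granting it, $D(\Xtwoone)$ is an $\RRR(\Lts,\iota)\sphyp$-chamber; since $\RRR(\Stwoone)\subset\RRR(\Lts,\iota)$, it is contained in a unique $\RRR(\Stwoone)\sphyp$-chamber, and because the ample class $w_M$ lies in $D(\Xtwoone)\spcirc$, that $\RRR(\Stwoone)\sphyp$-chamber is the one containing the ample classes, namely $\NE\spcirc(\Xtwoone)$ by Proposition~\ref{prop:nef}(1). This proves the first assertion.

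For the second assertion, recall that $\OG^+(\Stwoone)$ permutes the $\RRR(\Lts,\iota)\sphyp$-chambers. If $w_M^g=w_M$, then $D(\Xtwoone)^g$ is an $\RRR(\Lts,\iota)\sphyp$-chamber whose interior contains $w_M^g=w_M$ — which lies on no wall — so $D(\Xtwoone)^g=D(\Xtwoone)$, i.e. $g\in\aut(D(\Xtwoone))$. Conversely, suppose $g\in\aut(D(\Xtwoone))$. The $210$ hyperplanes $(v)\sperp$ with $v\in\Delta(\Xtwoone)$ are pairwise distinct and are exactly the walls of $D(\Xtwoone)$, so $\Delta(\Xtwoone)$ is the minimal defining set of $D(\Xtwoone)$ and $g$ permutes the walls; hence for each $v\in\Delta(\Xtwoone)$ we have $v^g=\mu_v\,v'$ for some $v'\in\Delta(\Xtwoone)$ and some rational $\mu_v>0$ (the sign is forced because $D(\Xtwoone)=D(\Xtwoone)^g$ lies on the non-negative side of $v'$). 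Taking $v=e_i$ (or $f_i$), the equality $\mu_v^2\,v'^2=v^2=-2$ rules out $v'$ of norm $-1$ (that would force $\mu_v=\sqrt2\notin\Q$), so $v'\in\{e_1,\dots,e_{21},f_1,\dots,f_{21}\}$ and $\mu_v=1$. Thus $g$ permutes the $42$ vectors $e_1,\dots,e_{21},f_1,\dots,f_{21}$ as a set, and therefore fixes $w_M=\frac13\sum_{i=1}^{21}(e_i+f_i)$. Hence $\aut(D(\Xtwoone))=\{g\in\OG^+(\Stwoone):w_M^g=w_M\}$, completing the proof.
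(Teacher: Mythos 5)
The paper does not prove this proposition: it is stated as a recollection of results from Dolgachev--Kond\=o~\cite{MR1935564}, so there is no in-paper argument to compare against. Your reconstruction is correct and fills in exactly the right surrounding reasoning: the glue argument showing $c_I\in\RRR(\Lts,\iota)$ via a $(-1)$-vector of $\Tiota\dual=D_4\dual$ matches the computation the paper itself performs in the proof of Lemma~\ref{lem:containtwo}; the containment in $\NE\spcirc(\Xtwoone)$ via $\RRR(\Stwoone)\subset\RRR(\Lts,\iota)$ and the ampleness of $w_M$ is right; and the delegation of the actual wall enumeration to~\cite{MR1935564} is consistent with how the paper treats it. Your proof of the ``if and only if'' is also sound --- the norm argument ($\mu_v^2\cdot(-1)=-2$ having no rational solution) correctly forces $g$ to permute $\{e_i,f_i\}$ and hence fix $w_M=\frac13\sum(e_i+f_i)$, and the converse direction via interiors of chambers in a locally finite family is standard.
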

Thus we can apply  Proposition~\ref{prop:FFFGGG} 
to  the pair of chambers $D(\Xtwoone)$ and $\NE\spcirc(\Xtwoone)$
for the study of $\aut(\NE(X_{2,1}))$ and $\Aut(\Xtwoone)$.
\par
\medskip
We have the following elements  in $\Aut(\Xtwoone)$ and $\OG^+(\Stwoone)$.
Since $\Aut(\Xtwoone)$ is naturally embedded in $\OG^+(\Stwoone)$,
we use the same letter to denote an element of $\Aut(\Xtwoone)$ and its image in $\OG^+(\Stwoone)$.
\begin{itemize}
\item
The action of $\PGL(3, \F_4)$ on $\P^2$ induces  automorphisms of
the inseparable double cover $Y$ of $\P^2$,
and hence automorphisms of $\Xtwoone$.
Their action on $\Stwoone$ preserves $D(\Xtwoone)$.
\item
The interchange of the two factors of $\P^2\times\P^2$ preserves $X_M\subset \P^2\times\P^2$,
and hence it induces an involution  $\switch\in \Aut(\Xtwoone)$,
which we call the \emph{switch}.
Its action on $\Stwoone$ preserves $D(\Xtwoone)$.
\item
For each set $I$ of general six points in $\P^2(\F_4)$, 
the linear system of plane curves of degree $5$ that pass through the points of $I$
and are singular at each point of $I$
defines a birational involution  of $\P^2$,
and this involution  lifts to an involution of $Y$. 
Hence we obtain an involution $\Cremona_I\in \Aut(\Xtwoone)$,
which we call a \emph{Cremona automorphism} of $\Xtwoone$.
The action of $\Cremona_I$ on $\Stwoone$ is the reflection with respect to $c_I\in \Stwoone\dual$.
\item
The Frobenius action of $\Gal(\F_{4}/\F_2)$ on $X_M$ induces
an isometry $\Frob$ of $\Stwoone$, which preserves $D(\Xtwoone)$.
\item
The $(-2)$-vectors $e_i$ and $f_i$ defines the reflections $s_{e_i}$ and $s_{f_i}$.
\erase{
We have 
the reflections $s_{e_i}$ and $s_{f_i}$ with respect to the $(-2)$-vectors $e_i$ and $f_i$}
\end{itemize}
By the reflections $\Cremona_I, s_{e_i}$ and $ s_{f_i}$,
we see that the chamber  $D(\Xtwoone)$ is fundamental.
\begin{theorem}[\cite{MR1935564}] \label{thm:char2sigma1}
{\rm (1)}
The projective automorphism group $\Aut(\Xtwoone, w_M)$ of 
$X_M\subset \P^2\times \P^2$
is generated by $\PGL(3, \F_4)$ and the switch $\switch$.
\par
{\rm (2)}
The group $\aut(D(\Xtwoone))$ 
is generated by $\Aut(\Xtwoone, w_M)$ and $\Frob$.
\par
{\rm (3)}
The automorphism group $\Aut(\Xtwoone)$ is generated by
$\Aut(\Xtwoone, w_M)$ and the $168$ Cremona automorphisms $\Cremona_I$.
\par
{\rm (4)}
The  group $\aut(\NE(\Xtwoone))$ 
is generated by
$\Aut(\Xtwoone)$ and $\Frob$.
\par
{\rm (5)}
The group $\OG^+(\Stwoone)$ is generated by
$\aut(\NE(\Xtwoone))$ and the $21+21$ reflections $s_{e_i}$ and $s_{f_i}$.
\end{theorem}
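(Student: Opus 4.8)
The plan is to prove (1) by a direct computation on the model $X_M$, (2) by a combinatorial analysis of the stabilizer of $w_M$, and (3)--(5) by feeding the chamber $D(\Xtwoone)$ into the Conway--Borcherds method of Section~\ref{sec:chamber}. First, for (1): a linear automorphism of $\P^2\times\P^2$ is a pair $(A,B)$ of matrices acting on the two factors, possibly composed with the interchange of the factors, and since the bidegree $(1,2)$, respectively $(2,1)$, forms in the ideal of $X_M$ span one-dimensional spaces, $(A,B)$ preserves $X_M$ precisely when it rescales each of the two defining equations. Writing out these two conditions in characteristic $2$, where squaring is additive, shows that $B$ is determined by $A$ up to a scalar as a Frobenius twist of $(A^{T})\inv$, and that compatibility of the two conditions forces all non-zero entries of $A$ to share a common cube; after a projective rescaling this places the entries of $A$ in $\F_4$, so $A\in\PGL(3,\F_4)$. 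Together with the switch this gives (1), and the asserted action on $\Stwoone$ is read off from the geometric meaning of $h$, the $e_i$ and the $f_i$.

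For (2): by the proposition stated just above the theorem, $\aut(D(\Xtwoone))=\{g\in\OG^+(\Stwoone):w_M^g=w_M\}$. Using $w_M=\tfrac13\sum_i(e_i+f_i)$ and the intersection numbers among the $e_i,f_i$, one checks that $\{v\in\Stwoone:v^2=-2,\ \intM{v,w_M}{}=1\}=\{e_1,\dots,e_{21},f_1,\dots,f_{21}\}$. An isometry fixing $w_M$ thus permutes these $42$ vectors preserving all mutual pairings, and, as they generate $\Stwoone$, it is uniquely determined by that permutation; the permutation is an automorphism of the incidence configuration built from the $21$ points, the $21$ lines and the $168$ hyperovals of $\PG(2,\F_4)$. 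The automorphism group of that configuration is $\mathrm{P}\Gamma\mathrm{L}(3,\F_4)$ extended by the polarity exchanging points and lines, which by (1) is exactly $\langle\Aut(\Xtwoone,w_M),\Frob\rangle$; conversely each of $\PGL(3,\F_4)$, the switch and $\Frob$ fixes $w_M$, so (2) follows.

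For (3), (4), (5) we use the facts already set up in the excerpt: with the primitive embedding $\iota:\Stwoone\inj\Lts$ whose orthogonal complement $\Tiota$ is the $D_4$-lattice (so that \eqref{eq:hypothesisT} holds), the chamber $D(\Xtwoone)$ is an $\RRR(\Lts,\iota)\sphyp$-chamber contained in the $\RRR(\Stwoone)\sphyp$-chamber $\NE\spcirc(\Xtwoone)$, its walls are the $42$ hyperplanes $(e_i)\sperp,(f_i)\sperp$ and the $168$ hyperplanes $(c_I)\sperp$, and it is fundamental, the wall-crossing isometries being $s_{e_i}$, $s_{f_i}$ and the reflections $x\mapsto x+2\intM{x,c_I}{}c_I$ (which lie in $\OG^+(\Stwoone)$ since $c_I\in\Stwoone\dual$ with $c_I^2=-1$ and which coincide with $\Cremona_I$). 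Apply Proposition~\ref{prop:FFFGGG} with $\FFF=\RRR(\Lts,\iota)\sphyp$, $\GGG=\RRR(\Stwoone)\sphyp$, $D=D(\Xtwoone)$ and $C=\NE\spcirc(\Xtwoone)$: the $42$ walls $(e_i)\sperp,(f_i)\sperp$ lie in $\GGG$ while the $168$ walls $(c_I)\sperp$ do not, so $\aut(\NE(\Xtwoone))$ is generated by $\aut(D(\Xtwoone))$ and the reflections $\Cremona_I$; combining this with (2) and with $\Aut(\Xtwoone,w_M),\Cremona_I\in\Aut(\Xtwoone)\subset\aut(\NE(\Xtwoone))\ni\Frob$ gives (4). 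For (3): given $g\in\Aut(\Xtwoone)$, a chain of adjacent $\FFF$-chambers inside $\NE\spcirc(\Xtwoone)$ from $D(\Xtwoone)$ to $D(\Xtwoone)^g$ crosses only walls of type $(c_I)\sperp$, so $D(\Xtwoone)^g=D(\Xtwoone)^h$ for some $h$ in the group generated by the $\Cremona_I$; then $gh\inv\in\aut(D(\Xtwoone))=\langle\Aut(\Xtwoone,w_M),\Frob\rangle$, and since $gh\inv$ is a biregular automorphism of $\Xtwoone$ while $\Frob$ is not, $gh\inv\in\Aut(\Xtwoone,w_M)$, which proves (3). Finally (5) is the case $\GGG=\emptyset$ of the same method: by the corollary to Proposition~\ref{prop:FFFGGG} in that case, $\OG^+(\Stwoone)$ is generated by $\aut(D(\Xtwoone))$ and the wall-crossing isometries $s_{e_i},s_{f_i},\Cremona_I$, hence by $\aut(\NE(\Xtwoone))$ together with the $21+21$ reflections $s_{e_i},s_{f_i}$.

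Granting what the excerpt already establishes about $D(\Xtwoone)$, the two genuinely substantive points are the classical identification in (2) of the automorphism group of the point-line-hyperoval configuration of $\PG(2,\F_4)$ with $\mathrm{P}\Gamma\mathrm{L}(3,\F_4)$ extended by a polarity, and the Torelli-type input used in passing from (4) to (3): that the Frobenius isometry $\Frob$ is not induced by a biregular automorphism of $\Xtwoone$, i.e.\ that $\Aut(\Xtwoone)$ is of index $2$ in $\aut(\NE(\Xtwoone))$. The latter is where I would expect the real obstacle to lie.
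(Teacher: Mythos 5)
First, note that the paper does not actually prove Theorem~\ref{thm:char2sigma1}: it is quoted from Dolgachev--Kond\=o~\cite{MR1935564}, and Section~6.1 only recalls the geometric data ($X_M$, the chamber $D(\Xtwoone)$, the Cremona involutions) needed to state it and to feed it into Proposition~\ref{prop:FFFGGG} for the $\sigma=10$ case. So your proposal is a reconstruction of the proof in the cited reference rather than of anything carried out in this paper. Your chamber-theoretic derivation of (4) and (5) from (1)--(3) via Proposition~\ref{prop:FFFGGG} is correct and is exactly the mechanism the paper sets up for Theorem~\ref{thm:char2sigma10}; parts (1) and (2) are plausible sketches of the Dolgachev--Kond\=o computations, modulo routine verifications you wave at (that the $42$ classes $e_i, f_i$ are precisely the $(-2)$-vectors pairing to $1$ with $w_M$, and that every automorphism of the point--line--hyperoval configuration of $\P^2(\F_4)$ comes from $\mathrm{P}\Gamma\mathrm{L}(3,\F_4)$ extended by a polarity).

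The genuine gap is the one you flag yourself, and it sits in (3): your argument reduces (3) to the assertion that $\Aut(\Xtwoone)\cap\aut(D(\Xtwoone))=\Aut(\Xtwoone,w_M)$, i.e.\ that the isometry $\Frob$ (hence any element of the coset $\Aut(\Xtwoone,w_M)\cdot\Frob$) is not induced by a biregular automorphism of $\Xtwoone$. Nothing in your proposal proves this, and it cannot be extracted from the chamber combinatorics alone: if $\Frob$ were realized by an automorphism, then by your own (4) one would have $\Aut(\Xtwoone)=\aut(\NE(\Xtwoone))$ and (3) would be false as stated. The Torelli-theoretic machinery of this paper (Section~\ref{sec:Torelli}, Corollary~\ref{cor:finiteindex}) is explicitly restricted to odd $p$, so it is not available in characteristic~$2$; the index-two statement is genuinely part of what \cite{MR1935564} proves by a separate period argument and must either be imported or re-proved. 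Until that input is supplied, (1), (2), (4), (5) stand --- observe that (4) already follows from (1), (2) and Proposition~\ref{prop:FFFGGG} without (3), since $\Aut(\Xtwoone,w_M)$ and the $\Cremona_I$ lie in $\Aut(\Xtwoone)$ --- but (3) does not.
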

We then study $\aut(\NE(\Xtwoten))$.
By Corollary~\ref{cor:j},
we have an embedding 
$$
j : S_{2,1}\inj \Stwoten
$$
that induces $\Stwoone\dual(2)\cong \Stwoten$.
Composing $j$ with some element of $W(\Stwoten)\times\{\pm 1\}$,
we can assume that $j(w_M)$ is contained in
 $\NE(\Xtwoten)$ (Proposition~\ref{prop:nef}(2)).
The isomorphism $j_*:  \OG^+(\Stwoone)\isom \OG^+(\Stwoten)$ induced by $j$ is denoted by
$$
g\mapsto g\sprime.
$$
The $j(\RRR({\Lts, \iota}))\sphyp$-chamber $j(D(\Xtwoone))$ is fundamental,
and we have 
$$
\aut(j(D(\Xtwoone)))=\aut(D(\Xtwoone))\sprime.
$$
\begin{lemma}\label{lem:containtwo}
The set $j(\RRR(\Lts, \iota))$ contains $\RRR(\Stwoten)$.
Hence the $j(\RRR({\Lts, \iota}))\sphyp$-chamber $j(D(\Xtwoone))$
is contained in the $\RRR(\Stwoten)\sphyp$-chamber $\NE\spcirc(\Xtwoten)$.
\end{lemma}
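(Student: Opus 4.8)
The plan is to reduce the statement to a computation about $\Stwoone$ and its embedding $\iota\colon \Stwoone\inj\Lts$. Under the identification $\Stwoten=\Stwoone\dual(2)$, the map $j$ is on underlying $\Z$-modules nothing but the inclusion $\Stwoone\hookrightarrow\Stwoone\dual$, so its $\Q$-linear extension $j_{\Q}\colon \Stwoone\tensor\Q\isom\Stwoten\tensor\Q$ carries $\Stwoone\dual$ bijectively onto $\Stwoten$ and multiplies every norm by $2$. Hence a vector of $\Stwoten$ has norm $-2$ precisely when the corresponding element $\xi\in\Stwoone\dual$ satisfies $\xi^2=-1$, and $j$ carries the set $\RRR(\Lts, \iota)\subset\Stwoone\dual$ into $\Stwoten$. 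Since $j_{\Q}$ is injective, the asserted inclusion $\RRR(\Stwoten)\subseteq j(\RRR(\Lts, \iota))$ amounts exactly to the claim that every $(-1)$-vector $\xi\in\Stwoone\dual$ belongs to $\RRR(\Lts, \iota)$, i.e.\ equals $r_S$ for some $r\in\RRR(\Lts)$.

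To prove this I would exploit $\Tiota\cong D_4$. Writing $r=r_S+r_T$ with $r_T\in\Tiota\dual=D_4\dual$ for $r\in\RRR(\Lts)$, one has $r_S^2=-2-r_T^2$, so $r_S^2<0$ forces $-r_T^2<2$; this leaves only $r_T=0$ (then $r\in\RRR(\Stwoone)$, with $r_S^2=-2$) or $r_T$ a norm $-1$ vector of $D_4\dual$ (then $r_S^2=-1$). For the converse, let $\xi\in\Stwoone\dual$ with $\xi^2=-1$. Then $\xi\notin\Stwoone$ because $\Stwoone$ is even, so $\xi\bmod\Stwoone$ is a nonzero element of $A_{\Stwoone}$; transporting it through the glue anti-isometry $A_{\Stwoone}\isom A_{\Tiota}$ attached to $\iota$ yields a nonzero class of $A_{D_4}$, which is represented by some $\eta\in D_4\dual$ with $\eta^2=-1$. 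Then $r:=\xi+\eta$ lies in $\Lts$, $r^2=\xi^2+\eta^2=-2$, and $r_S=\xi$. This shows $\RRR(\Lts, \iota)=\RRR(\Stwoone)\cup\set{\xi\in\Stwoone\dual}{\xi^2=-1}$; in particular it contains every $(-1)$-vector of $\Stwoone\dual$, which is the first assertion.

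For the second assertion, $\RRR(\Stwoten)\subseteq j(\RRR(\Lts, \iota))$ gives $\RRR(\Stwoten)\sphyp\subseteq j(\RRR(\Lts, \iota))\sphyp$, and, as recalled in Section~\ref{sec:chamber}, every $j(\RRR(\Lts, \iota))\sphyp$-chamber is then contained in a unique $\RRR(\Stwoten)\sphyp$-chamber. To see that the one containing $j(D(\Xtwoone))$ is $\NE\spcirc(\Xtwoten)$, I would use the ample class $w_M$. It lies in the interior of the $\RRR(\Lts, \iota)\sphyp$-chamber $D(\Xtwoone)$, so $\intM{w_M,v}{}\ne0$ for every $v\in\RRR(\Lts, \iota)$, whence $\intM{j(w_M),j(v)}{}=2\intM{w_M,v}{}\ne0$ for every $v\in\RRR(\Lts, \iota)$; thus $j(w_M)$ lies in the interior of $j(D(\Xtwoone))$ and off every hyperplane $(r)\sperp$ with $r\in\RRR(\Stwoten)$. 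Since $j$ was arranged so that $j(w_M)\in\NE(\Xtwoten)$ and since $j(w_M)^2=2w_M^2=28>0$, the vector $j(w_M)$ lies in $\NE\spcirc(\Xtwoten)$, hence (being off all $(-2)$-curve hyperplanes) in its interior; therefore the $\RRR(\Stwoten)\sphyp$-chamber through $j(w_M)$ is $\NE\spcirc(\Xtwoten)$, and $j(D(\Xtwoone))\subseteq\NE\spcirc(\Xtwoten)$.

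The one genuinely non-formal input is the glue computation for $D_4$: that $(A_{D_4},q_{D_4})$ is $(\Z/2)^2$ with all three nonzero elements of value $1\in\Q/2\Z$ — this both ensures the existence of the glue anti-isometry with the $\Stwoone$-side (where a $(-1)$-vector contributes a class of value $1$) and, more importantly, that each nonzero class of $D_4\dual/D_4$ is realized by a vector of norm exactly $-1$ (the three minimal glue vectors of $D_4$). Given this, the argument is pure bookkeeping with the rescaling $\Stwoten=\Stwoone\dual(2)$ and the locally-finite chamber formalism of Section~\ref{sec:chamber}.
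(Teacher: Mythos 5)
Your proof is correct and follows essentially the same route as the paper: reduce the claim (via the rescaling $\Stwoten\cong\Stwoone\dual(2)$) to showing that every $(-1)$-vector of $\Stwoone\dual$ lies in $\RRR(\Lts,\iota)$, invoke Nikulin's description of $\Lts/(\Stwoone\oplus\Tiota)$ as the graph of an anti-isometry of discriminant forms, and verify by direct computation that every nonzero class of $A_{D_4}$ is represented by a norm $-1$ vector of $D_4\dual$. The extra material you supply (the full determination of $\RRR(\Lts,\iota)$ and the explicit argument locating $j(D(\Xtwoone))$ inside $\NE\spcirc(\Xtwoten)$ via $j(w_M)$) is correct and merely makes explicit what the paper leaves to the general chamber formalism.
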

\begin{proof}
It is enough to show that, if $v\in \Stwoone\dual$ satisfies 
$v^2=-1$, then $v\in \RRR(\Lts, \iota)$, that is, 
there exists $u\in \Tiota\dual$ such that $u^2=-1$ 
and that $u+v$ is contained in the submodule $\Lts$ of $\Stwoone\dual\oplus \Tiota\dual$.
By Nikulin~\cite[Proposition 1.4.1]{MR525944}, 
the submodule $\Lts/(\Stwoone\oplus \Tiota)$ of 
$(\Stwoone\dual\oplus \Tiota\dual)/(\Stwoone\oplus \Tiota)=A_{\Stwoone}\oplus A_{\Tiota}$
is the graph of an isomorphism
$$
q_{\Stwoone}\cong -q_{\Tiota}.
$$
Hence it is enough to show that,
for any $\bar{u}\in A_{\Tiota}$ with $q_{\Tiota}(\bar{u})=1$,
there exists $u\in \Tiota\dual$ such that $u^2=-1$ and $u\bmod \Tiota=\bar{u}$.
Since $\Tiota$ is a root lattice of type $D_4$,
we can confirm this fact by  direct computation.
The set of $(-1)$-vectors  in $\Tiota\dual$ consists of $24$ vectors,
and its image by the natural projection $\Tiota\dual\to A_{\Tiota}$ is the set of all non-zero elements
of $A_{\Tiota}\cong \F_2^2$.
\end{proof}
The set of  walls of $j(D(\Xtwoone))$ is equal to
\begin{eqnarray*}
\shortset{(j(e_i))\sperp}{i=1, \dots, 21} \;\cup\; 
\shortset{(j(f_i))\sperp}{i=1, \dots, 21} \;\cup\;  \\
\shortset{(j(c_I))\sperp}{\textrm{$I$ is a set of general six points}}.
\end{eqnarray*}
%
Note that the $21+21$ vectors $j(e_i)$ and $j(f_i)$ are of norm $-4$ and the $168$ vectors $j(c_I)$ are of norm $-2$.
Note also that neither $(j(e_i))\sperp$ nor $(j(f_i))\sperp$ are contained in $\RRR(\Stwoten)\sphyp$,
because  there are no rational numbers $\lambda$ such that $(-4)\lambda^2=-2$.
By Proposition~\ref{prop:FFFGGG}, Theorem~\ref{thm:char2sigma1} and Lemma~\ref{lem:containtwo},
we obtain the following:
\begin{theorem}\label{thm:char2sigma10}
The group $\aut(\NE(\Xtwoten))$ is generated by
$\PGL(3, \F_4)\sprime$, $\switch\sprime$,  $\Frob\sprime$, 
$s_{e_i}\sprime$ and $s_{f_i}\sprime$.
\end{theorem}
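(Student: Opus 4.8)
The plan is to deduce the statement from Proposition~\ref{prop:FFFGGG}, applied on the lattice $S=\Stwoten$ with the locally finite $\OG^+(\Stwoten)$-invariant family $\FFF:=j(\RRR(\Lts,\iota))\sphyp$, its $\OG^+(\Stwoten)$-invariant subfamily $\GGG:=\RRR(\Stwoten)\sphyp$, the $\FFF$-chamber $D:=j(D(\Xtwoone))$, and the $\GGG$-chamber $C:=\NE\spcirc(\Xtwoten)$. First I would assemble the hypotheses of that proposition, all of which are already available: $D$ is fundamental, being the image of the fundamental $\RRR(\Lts,\iota)\sphyp$-chamber $D(\Xtwoone)$ under the isometry $j_*$; one has $\GGG\subset\FFF$ by Lemma~\ref{lem:containtwo}; $C$ is an $\RRR(\Stwoten)\sphyp$-chamber by Proposition~\ref{prop:nef}(1); and, after composing $j$ with a suitable element of $W(\Stwoten)\times\{\pm1\}$, the class $j(w_M)$ lies in $\NE(\Xtwoten)$. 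Since $j(w_M)$ lies in the interior of $D$ it is orthogonal to no vector of $\RRR(\Stwoten)$, hence to no $(-2)$-curve, so $j(w_M)$ is in fact ample and $D\subset C$. Proposition~\ref{prop:FFFGGG} then tells us that $\aut(\NE(\Xtwoten))=\aut(C)$ is generated by $\aut(D)$ together with $\Gamma:=\shortset{g_v}{v\in\Delta_D,\ (v)\sperp\notin\GGG}$, and it remains to pin down both pieces.

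For $\aut(D)$ I would use that $j_*$ carries $D(\Xtwoone)$ isometrically onto $D$, so $\aut(D)=\aut(D(\Xtwoone))\sprime$; by Theorem~\ref{thm:char2sigma1}(1) and~(2) the group $\aut(D(\Xtwoone))$ is generated by $\PGL(3,\F_4)$, $\switch$ and $\Frob$, hence $\aut(D)$ is generated by $\PGL(3,\F_4)\sprime$, $\switch\sprime$ and $\Frob\sprime$. For $\Gamma$ I would note that the walls of $D$ are the $j$-images of the walls of $D(\Xtwoone)$, i.e.\ the hyperplanes orthogonal to the $21+21$ vectors $j(e_i),j(f_i)$ of norm $-4$ and to the $168$ vectors $j(c_I)$ of norm $-2$. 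Each $j(c_I)$ is a $(-2)$-vector of $\Stwoten$, so $(j(c_I))\sperp\in\GGG$ and contributes nothing to $\Gamma$; on the other hand no $(-2)$-vector of $\Stwoten$ is proportional to a norm $-4$ vector (there is no rational $\lambda$ with $-4\lambda^2=-2$), so $(j(e_i))\sperp$ and $(j(f_i))\sperp$ are not in $\GGG$, and these are exactly the walls contributing to $\Gamma$.

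To finish I would observe that the reflections $s_{e_i}$ and $s_{f_i}$ are precisely the isometries realizing the adjacency of $D(\Xtwoone)$ across $(e_i)\sperp$ and $(f_i)\sperp$ (this is what makes $D(\Xtwoone)$ fundamental), so $s_{e_i}\sprime$ and $s_{f_i}\sprime$ realize the adjacency of $D$ across $(j(e_i))\sperp$ and $(j(f_i))\sperp$; here I would check that $s_{e_i}\sprime$ is just the reflection in $j(e_i)$ and that it does preserve $\Stwoten$ because $\intM{x,j(e_i)}{}\in2\Z$ for every $x\in\Stwoten$, and likewise for $f_i$. Taking these as the $g_v$ in $\Gamma$, Proposition~\ref{prop:FFFGGG} yields that $\aut(\NE(\Xtwoten))$ is generated by $\PGL(3,\F_4)\sprime$, $\switch\sprime$, $\Frob\sprime$, $s_{e_i}\sprime$ and $s_{f_i}\sprime$. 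I do not expect a genuine obstacle: the statement is essentially an assembly of Theorem~\ref{thm:char2sigma1}, Lemma~\ref{lem:containtwo} and Proposition~\ref{prop:FFFGGG}, and the only delicate points are arranging $j(w_M)$ to land in the ample cone so that $D\subset C$, and checking the integrality of the norm $-4$ reflections $s_{e_i}\sprime,s_{f_i}\sprime$.
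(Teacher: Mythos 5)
Your proposal is correct and follows essentially the same route as the paper: apply Proposition~\ref{prop:FFFGGG} to the fundamental $j(\RRR(\Lts,\iota))\sphyp$-chamber $j(D(\Xtwoone))$ sitting inside $\NE\spcirc(\Xtwoten)$ (via Lemma~\ref{lem:containtwo}), identify $\aut(j(D(\Xtwoone)))=\aut(D(\Xtwoone))\sprime$ from Theorem~\ref{thm:char2sigma1}, and observe that only the norm $-4$ walls $(j(e_i))\sperp$, $(j(f_i))\sperp$ survive into $\Gamma$ since the $j(c_I)$ are $(-2)$-vectors. The extra verifications you add (that $j(w_M)$ is ample so $D\subset C$, and the integrality of the norm $-4$ reflections) are sound and merely make explicit what the paper leaves implicit.
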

\subsection{The group $\aut(\NE(\Xthreeten))$}\label{subsec:AutNEchar3}
By the same argument as above,
we obtain  a set of generators of $\aut(\NE(\Xthreeten))$
from the result of Kondo and Shimada~\cite{KondoShimada}.
We consider the Fermat quartic surface
$$
X_{\FQ}: x_0^4+x_1^4+x_2^4+x_3^4=0
$$ 
in characteristic $3$.
Then $X_{\FQ}$ is isomorphic to $X_{3,1}$.
The surface $X_{\FQ}$ contains $112$ lines,
and their classes $l_1, \dots, l_{112}$ span $\Sthreeone$.
We denote by $h_{\FQ}\in \Sthreeone$ the class of a hyperplane section of $X_{\FQ}$.
\par
\medskip
There exists a primitive embedding $\iota : S_{3,1}\inj \Lts$,
which is   unique up to $\OG(\Lts)$.
The orthogonal complement 
$\Tiota$  is isomorphic to the root lattice of type $2A_2$, and 
hence satisfies the hypothesis~\eqref{eq:hypothesisT}.
We  calculated an $\RRR(\Lts, \iota)\sphyp$-chamber $D(\Xthreeone)$ 
that contains $h_{\FQ}$ in its interior, 
and found 
$$
\textrm{
$648$  vectors $u_j\in S_{3,1}\dual$ of norm $-4/3$, \;\;and \;
$5184$   vectors $w_k\in S_{3,1}\dual$ of norm $-2/3$}
$$
such that 
the walls of $D(\Xthreeone)$ consist of  the $112$ hyperplanes  $(l_i)\sperp$,
the $648$ hyperplanes  $(u_j)\sperp$ and the $5184$ hyperplanes $(w_k)\sperp$.
Note that the $\RRR(\Lts, \iota)\sphyp$-chamber $D(\Xthreeone)$ 
is contained in the $\RRR(\Sthreeone)\sphyp$-chamber $\NE\spcirc(\Xthreeone)$,
because $h_{\FQ}\in D(\Xthreeone)\spcirc$.
Moreover, since $28\, h_{\FQ}=\sum l_i$,  the following holds:
\begin{proposition}
An isometry $g\in \OG^+(\Sthreeone)$  belongs to   $\aut(D(\Xthreeone))$ if and only
if $h_{\FQ}^g=h_{\FQ}$.
\end{proposition}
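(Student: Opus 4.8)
The plan is to prove the two implications separately, using the chamber formalism recalled in Section~\ref{sec:chamber}.

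For the ``if'' direction, I would start from the fact established there that $\RRR(\Lts,\iota)$ is invariant under the action of $\OG^+(\Sthreeone)$; hence any $g\in\OG^+(\Sthreeone)$ permutes the $\RRR(\Lts,\iota)\sphyp$-chambers, so that $D(\Xthreeone)^g$ is again an $\RRR(\Lts,\iota)\sphyp$-chamber. Since $g$ acts on $\Sthreeone\tensor\R$ as a linear homeomorphism it commutes with taking interiors, so from $h_{\FQ}^g=h_{\FQ}$ and $h_{\FQ}\in D(\Xthreeone)\spcirc$ we get that $h_{\FQ}$ lies in the interior of $D(\Xthreeone)^g$ as well. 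Two $\RRR(\Lts,\iota)\sphyp$-chambers having a common interior point coincide, because those interiors are among the pairwise disjoint connected components of $\PPP_{\Sthreeone}\setminus\bigcup_v (v)\sperp$; hence $D(\Xthreeone)^g=D(\Xthreeone)$, that is, $g\in\aut(D(\Xthreeone))$.

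For the ``only if'' direction, suppose $g\in\aut(D(\Xthreeone))$, so that $g$ permutes the walls of $D(\Xthreeone)$. Each such wall is a hyperplane of $\PPP_{\Sthreeone}$ cut out by a vector of $\Sthreeone\dual$, so the line of $\Sthreeone\tensor\R$ orthogonal to it is rational and therefore contains a primitive vector of $\Sthreeone$, unique up to sign, the sign being fixed by requiring nonnegativity on $D(\Xthreeone)$. As an isometry, $g$ carries the distinguished primitive normal of a wall to that of its image wall, preserving norms. I would then observe that the walls whose distinguished primitive normal has norm $-2$ are exactly $(l_1)\sperp,\dots,(l_{112})\sperp$: if a primitive $p\in\Sthreeone$ with $p^2=-2$ spanned the line $\Q u_j$, resp.\ $\Q w_k$, then writing $u_j=\lambda p$, resp.\ $w_k=\lambda p$, with $\lambda\in\Q$ would force $\lambda^2=2/3$, resp.\ $\lambda^2=1/3$, which is impossible. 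Consequently $g$ permutes the set $\{l_1,\dots,l_{112}\}$ and hence fixes $\sum_{i=1}^{112}l_i=28\,h_{\FQ}$; since $\Sthreeone$ is torsion free, $h_{\FQ}^g=h_{\FQ}$.

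The topological bookkeeping in the first step and the sign normalization in the second are routine. The one point that genuinely needs checking is the identification, in the second step, of the norm $-2$ walls of $D(\Xthreeone)$ with the $112$ line classes $l_i$; this rests on nothing more than the fact that $2/3$ and $1/3$ are not squares of rational numbers, together with the non-redundancy of the list of walls of $D(\Xthreeone)$ recorded in its explicit description above.
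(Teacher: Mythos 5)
Your argument is correct and is exactly the one the paper intends: the paper justifies the proposition only by the remark that $28\,h_{\FQ}=\sum l_i$, and your ``only if'' direction (sorting the walls by the norm of their primitive integral normals to see that $g$ must permute the $l_i$) together with the ``if'' direction (a fixed interior point forces $D(\Xthreeone)^g=D(\Xthreeone)$, since distinct $\RRR(\Lts,\iota)\sphyp$-chambers have disjoint interiors) is precisely the fleshed-out version of that hint. No gaps.
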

%
%
%
%
%
We have the following elements  in $\Aut(X_{3,1})$ and $\OG^+(S_{3,1})$.
Note that, 
for a polarization $h\in \Sthreeone$ of degree $2$,
we have the deck transformation   $\tau(h)  \in \Aut(\Xthreeone)$ of the generically  finite morphism
$\Xthreeone\to \P^2$ of degree $2$ induced by the the complete linear system associated with $h$.
\begin{itemize}
\item
The subgroup $\PGU(4, \F_9)$ of $\PGL(4, k)=\Aut(\P^3)$ acts on $X_{\FQ}$.
Its action on $\Sthreeone$ preserves 
$D(X_{3,1})$.
Moreover, the action of 
$\PGU(4, \F_9)$ on $\Sthreeone\dual$  is transitive 
on each of the set of $112$ vectors $l_i$,
the set of $648$ vectors $u_j$ and the set of $5184$ vectors $w_k$.
\item
There exists a polarization $h_{648}\in \Sthreeone$ of degree $2$ such that
the deck transformation  $\tau(h_{648})\in \Aut(\Xthreeone)$  
maps $D(X_{3,1})$ to an $\RRR(\Lts, \iota)\sphyp$-chamber adjacent to $D(X_{3,1})$
along one of the $648$ walls $(u_j)\sperp$.
\item
There exists a polarization $h_{5184}\in \Sthreeone$ of degree $2$ such that
the deck transformation  $\tau(h_{5184})\in \Aut(\Xthreeone)$  
maps $D(X_{3,1})$ to an $\RRR(\Lts, \iota)\sphyp$-chamber adjacent to $D(X_{3,1})$
along one of the $5184$ walls $(w_k)\sperp$.
\item
The Frobenius action of $\Gal(\F_{9}/\F_3)$ on $X_{\FQ}$
gives rise to an element  $\Frob\in \aut(D(X_{3,1}))$ of order $2$.
\item
We have 
the reflections $s_{l_i}$ with respect  to the classes $l_i$ of the $112$ lines on $X_{\FQ}$.
\end{itemize}
\erase{
\begin{remark}
The actions of the involutions $\tau(h_{648})$ and $\tau(h_{5184})$ 
on $\Sthreeone$ are \emph{not} reflections.
\end{remark}
}
%
%
Thus $D(X_{3,1})$ is fundamental, and hence
we have the following:
\begin{theorem}[\cite{KondoShimada}] \label{thm:char3sigma1}
{\rm (1)}
The projective automorphism group $\Aut(X,  h_{\FQ})$ of  the Fermat quartic surface 
$X_{\FQ}\subset \P^3$ is equal to 
$\PGU(4, \F_9)$.
\par
{\rm (2)}
The group $\aut(D(X_{3,1}))$ is generated by $\Aut(X,  h_{\FQ})$ and $\Frob$.
\par
{\rm (3)}
The automorphism group $\Aut(X_{3,1})$ is generated by
$\Aut(X,  h_{\FQ})$ and the two involutions $\tau(h_{648})$ and $\tau(h_{5184})$.
\par
{\rm (4)}
The group $\aut(\NE(X_{3,1}))$ is generated by
$\Aut(\Xthreeone)$ and $\Frob$.
\par
{\rm (5)}
The group $\OG^+(S_{3,1})$ is generated by
$\aut(\NE(X_{3,1}))$ and the $112$ reflections $s_{l_i}$.
\end{theorem}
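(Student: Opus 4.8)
The plan is to run the Conway--Borcherds machinery of Section~\ref{sec:chamber}, exactly as in the characteristic-$2$ case (Theorem~\ref{thm:char2sigma1}), for the primitive embedding $\iota:\Sthreeone\inj\Lts$ whose orthogonal complement $\Tiota$ is the root lattice of type $2A_2$, so that hypothesis~\eqref{eq:hypothesisT} holds. All five assertions will follow once we: (a) identify the projective automorphism group in~(1); (b) exhibit the explicitly described chamber $D(\Xthreeone)$ as an $\RRR(\Lts,\iota)\sphyp$-chamber with $h_{\FQ}$ in its interior and list its walls; (c) prove $D(\Xthreeone)$ is fundamental; and (d) compute $\aut(D(\Xthreeone))$. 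Parts~(2)--(5) then come out of Proposition~\ref{prop:FFFGGG} together with the semidirect-product structure discussed in Example~\ref{example:ref}.

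For~(1): in characteristic $3$ one has $x^4=x\cdot x^3$, so after a linear change of coordinates $X_{\FQ}$ becomes the Hermitian surface attached to $x_0\bar x_0+\cdots+x_3\bar x_3$ over $\F_9$, where $\bar x=x^3$; hence $\PGU(4,\F_9)$ acts on $X_{\FQ}$ by projective transformations. The classical determination of the linear automorphisms of a nondegenerate Hermitian form (or, equivalently, the remark that a projective automorphism must permute the $112$ lines and is thereby pinned down) then forces $\Aut(\Xthreeone,h_{\FQ})=\PGU(4,\F_9)$, and simultaneously identifies $X_{\FQ}$ with $\Xthreeone$, the Hermitian surface being supersingular of Artin invariant~$1$.

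Step~(b) is the computational heart of the proof, and I expect it to be the main obstacle. Starting from Conway's explicit $\RRR(\Lts)\sphyp$-chamber $\DDD$, one moves $\DDD$ by an isometry of $\Lts$ so that $\iota(h_{\FQ})$ lies in its interior, orthogonally projects to $\Sthreeone$, and enumerates the walls of the image. Since $\Tiota\cong 2A_2$ is negative definite, every $r\in\RRR(\Lts)$ with $(r)\sperp$ meeting $\iota(\PPP(\Xthreeone))$ has $-2\le r_S^2<0$, and discreteness of $\Sthreeone\dual$ in $\Sthreeone\tensor\R$ confines the norms of these projections to $-2$, $-4/3$, $-2/3$; the claim to be verified is that the walls are precisely the $112$ line classes $l_i$, the $648$ vectors $u_j$ and the $5184$ vectors $w_k$, and that $h_{\FQ}$ lies in $D(\Xthreeone)\spcirc$. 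The action of $\PGU(4,\F_9)$ on $\Sthreeone\dual$, transitive on each of these three sets, is what makes the enumeration finite. Once this is established, the relation $28\,h_{\FQ}=\sum l_i$ shows that any $g\in\OG^+(\Sthreeone)$ stabilizing $D(\Xthreeone)$ permutes its walls, hence permutes the norm-$(-2)$ walls $(l_i)\sperp$, hence fixes $h_{\FQ}$; the converse implication is Proposition~\ref{prop:nef} applied to the ample class $h_{\FQ}$. This yields the stated characterization of $\aut(D(\Xthreeone))$.

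For~(c): the reflections $s_{l_i}$ cross the $(-2)$-walls, and by $\PGU(4,\F_9)$-transitivity it suffices to produce one isometry crossing a wall of type $(u_j)\sperp$ and one crossing a wall of type $(w_k)\sperp$; these are supplied by the deck transformations $\tau(h_{648}),\tau(h_{5184})\in\Aut(\Xthreeone)$ of the degree-$2$ polarizations $h_{648},h_{5184}$, once one checks that they carry $D(\Xthreeone)$ to the chambers adjacent across those walls. Hence $D(\Xthreeone)$ is fundamental. For~(d), $\aut(D(\Xthreeone))=\{g\in\OG^+(\Sthreeone):h_{\FQ}^g=h_{\FQ}\}$ plainly contains $\PGU(4,\F_9)$ and the arithmetic Frobenius $\Frob$ of $\Gal(\F_9/\F_3)$ (which permutes the $112$ lines and fixes $h_{\FQ}$); one then checks $[\aut(D(\Xthreeone)):\PGU(4,\F_9)]=2$, e.g.\ because a wall-preserving isometry is determined by its permutation of the $l_i$ together with whether it is $\F_9$-linear or $\F_9$-semilinear, which is~(2). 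Now Proposition~\ref{prop:FFFGGG} with $\GGG=\emptyset$ shows $\OG^+(\Sthreeone)$ is generated by $\aut(D(\Xthreeone))$, the $s_{l_i}$, and the $\PGU(4,\F_9)$-conjugates of $\tau(h_{648}),\tau(h_{5184})$; since the last two families lie in $\aut(\NE(\Xthreeone))$, this is~(5), while applying Proposition~\ref{prop:FFFGGG} to the pair $D(\Xthreeone)\subset\NE\spcirc(\Xthreeone)$ with $\GGG=\RRR(\Sthreeone)\sphyp$ shows $\aut(\NE(\Xthreeone))$ is generated by $\PGU(4,\F_9)$, $\Frob$, $\tau(h_{648})$, $\tau(h_{5184})$. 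Deleting $\Frob$ gives~(3) and comparing the two generating sets gives~(4). The one piece of bookkeeping that needs genuine care, apart from the wall enumeration, is to confirm that $\Frob$ accounts for exactly the discrepancy between $\Aut(\Xthreeone)$ and $\aut(\NE(\Xthreeone))$ — concretely, that $\Frob$ fails to preserve the period $K_{(\Xthreeone,\eta)}$ of Section~\ref{sec:Torelli} whereas every automorphism of $\Xthreeone$ does — and that the lattice isometries $\tau(h_{648}),\tau(h_{5184})$ are genuinely realized by automorphisms of $\Xthreeone$.
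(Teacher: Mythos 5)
Your proposal follows essentially the same route as the paper (and the cited source \cite{KondoShimada}): the Conway--Borcherds method for the embedding $\iota:\Sthreeone\inj\Lts$ with $\Tiota\cong 2A_2$, the explicit $\RRR(\Lts,\iota)\sphyp$-chamber $D(\Xthreeone)$ with its $112+648+5184$ walls, the transitivity of $\PGU(4,\F_9)$ on each wall orbit, the deck transformations $\tau(h_{648})$, $\tau(h_{5184})$ and the reflections $s_{l_i}$ crossing the walls, and Proposition~\ref{prop:FFFGGG} to assemble (2)--(5). The only loose point is the phrase ``deleting $\Frob$ gives~(3)'': removing a generator from a generating set of $\aut(\NE(\Xthreeone))$ does not formally yield a generating set of the index-two subgroup $\Aut(\Xthreeone)$ — one should instead run the chamber-walking argument inside $\Aut(\Xthreeone)$ itself, using that the $\tau$'s and their $\Aut(\Xthreeone)$-conjugates are genuine automorphisms and that $\aut(D(\Xthreeone))\cap\Aut(\Xthreeone)=\Aut(X,h_{\FQ})=\PGU(4,\F_9)$ by part~(1) — but this repair lives entirely within the framework you set up.
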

By Corollary~\ref{cor:j},
we have an embedding 
$$
j : \Sthreeone\inj \Sthreeten
$$
 that induces $\Sthreeone\dual(3)\cong \Sthreeten$.
By~Proposition~\ref{prop:nef}(2),
we can assume that $j(h_{\FQ})$ is contained in
 $\NE(\Xthreeten)$.
The isomorphism $j_*: \OG^+(\Sthreeone)\isom \OG^+(\Sthreeten)$ induced by $j$ is denoted by
$g\mapsto g\sprime$.
The $j(\RRR(\Lts, \iota))\sphyp$-chamber $j(D(\Xthreeone))$ is fundamental,
and 
$\aut(j(D(\Xthreeone)))$
is equal to $\aut(D(\Xthreeone))\sprime$.
\begin{lemma}\label{lem:containthree}
The set $j(\RRR(\Lts, \iota))$ contains $\RRR(\Sthreeten)$.
Hence the $j(\RRR(\Lts, \iota))\sphyp$-chamber $j(D(\Xthreeone))$
is contained in the $\RRR(\Sthreeten)\sphyp$-chamber $\NE\spcirc(\Xthreeten)$.
\end{lemma}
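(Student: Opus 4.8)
The plan is to mimic the proof of Lemma~\ref{lem:containtwo} almost verbatim, replacing the type-$D_4$ computation by a type-$2A_2$ computation. Concretely, I would argue as follows. It suffices to show that every $v\in\Sthreeone\dual$ with $v^2=-2/3$ lies in $\RRR(\Lts,\iota)$; for then $j(v)\in\Sthreeten$ has norm $3\cdot(-2/3)=-2$, and conversely every $(-2)$-vector of $\Sthreeten$ arises this way since $j$ induces $\Sthreeone\dual(3)\cong\Sthreeten$. By definition $v\in\RRR(\Lts,\iota)$ means there is $u\in\Tiota\dual$ with $u^2=-2/3$ such that $u+v$ lies in the submodule $\Lts\subset\Sthreeone\dual\oplus\Tiota\dual$.

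Next I would invoke Nikulin~\cite[Proposition 1.4.1]{MR525944} exactly as in Lemma~\ref{lem:containtwo}: the overlattice $\Lts$ of $\Sthreeone\oplus\Tiota$ corresponds to the graph of an anti-isometry $\gamma: q_{\Sthreeone}\isom -q_{\Tiota}$ on the discriminant forms. So $u+v\in\Lts$ amounts to the condition $\gamma(v\bmod\Sthreeone)=u\bmod\Tiota$ in $A_{\Tiota}$. Hence it is enough to prove: for every $\bar u\in A_{\Tiota}$ with $q_{\Tiota}(\bar u)=-2/3 \bmod 2\Z$ (the image under $\gamma$ of the class of a $(-2/3)$-vector of $\Sthreeone\dual$), there exists $u\in\Tiota\dual$ of norm $-2/3$ with $u\bmod\Tiota=\bar u$. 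Since $\Tiota$ is the root lattice $2A_2$ and $A_{2A_2}=A_{A_2}\oplus A_{A_2}\cong(\Z/3)^2$, this is a finite check: the minimal-norm vectors of $A_2\dual$ have norm $-2/3$ and surject onto the two nonzero elements of $A_{A_2}\cong\Z/3$, so the $(-2/3)$-vectors of $(2A_2)\dual$ (namely a minimal vector in one $A_2\dual$ factor and $0$ in the other) surject onto the four elements of $A_{2A_2}$ that are nonzero in exactly one coordinate and satisfy $q_{2A_2}=-2/3$. I would also note that $\gamma$ carries classes of $(-2/3)$-vectors of $\Sthreeone\dual$ precisely to such classes, because $q_{\Sthreeone}$ takes the value $-2/3$ there and $\gamma$ is a $(-1)$-scaling isometry of discriminant forms, so the relevant $\bar u$ are exactly these. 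The second sentence of the lemma then follows because $\RRR(\Stwoten)\sphyp$-chambers — here $\RRR(\Sthreeten)\sphyp$-chambers — coarsen $\RRR(\Lts,\iota)\sphyp$-chambers once the containment $\RRR(\Sthreeten)\subset j(\RRR(\Lts,\iota))$ is established, exactly as in the char-$2$ case, using that $h_{\FQ}\in D(\Xthreeone)\spcirc$ forces $j(D(\Xthreeone))\subset\NE\spcirc(\Xthreeten)$.

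The only genuine obstacle is bookkeeping with the discriminant form: one must check that $q_{2A_2}$ actually represents $-2/3\bmod 2\Z$ with the right multiplicity and that $\gamma$ identifies the $(-2/3)$-locus of $q_{\Sthreeone}$ with this locus, rather than accidentally landing on classes of the norm-$(-4/3)$ vectors $u_j$ (which must \emph{not} be hit, since those walls stay in $j(\RRR(\Lts,\iota))\sphyp$ but are not in $\RRR(\Sthreeten)\sphyp$ — consistent with the observation that no rational $\lambda$ solves $(-4/3)\lambda^2=-2/3$ being irrelevant here but $(-4/3)\cdot 3=-4\ne -2$ showing $j(u_j)$ has norm $-4$). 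This is a routine but slightly delicate finite computation in the order-$9$ group $A_{2A_2}$; I would carry it out by listing the four cosets of norm $-2/3$ and exhibiting a norm-$(-2/3)$ representative of each in $(2A_2)\dual$.
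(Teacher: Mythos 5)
Your overall route is exactly the paper's (reduce to $(-2/3)$-vectors of $\Sthreeone\dual$, invoke Nikulin's Proposition 1.4.1 to reduce to a finite computation in the discriminant group of $\Tiota\cong 2A_2$), but there is a concrete arithmetic error that breaks the argument. By definition, $v\in\RRR(\Lts,\iota)$ means $v=r_S$ for some $r\in\RRR(\Lts)$ with $r^2=-2$; writing $r=v+u$ with $u=r_T\in\Tiota\dual$, the condition $v^2+u^2=-2$ together with $v^2=-2/3$ forces $u^2=-4/3$, not $u^2=-2/3$ as you assert. Indeed, with your choice $(u+v)^2=-4/3\notin 2\Z$, so $u+v$ cannot lie in the even lattice $\Lts$ at all. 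The same slip propagates to the discriminant computation: since $\gamma$ is an \emph{anti}-isometry $q_{\Sthreeone}\cong -q_{\Tiota}$ (you note this but do not apply the sign), the class $\bar u=\gamma(v\bmod\Sthreeone)$ satisfies $q_{\Tiota}(\bar u)=+2/3=-4/3\bmod 2\Z$, not $-2/3$. Consequently the finite check you describe is the wrong one: the relevant classes of $A_{2A_2}\cong(\Z/3)^2$ are the four that are nonzero in \emph{both} coordinates (each coordinate contributing $-2/3$), and each must be realized by a vector of norm exactly $-4/3$, namely a minimal vector of $A_2\dual$ in each factor; the classes nonzero in exactly one coordinate, and the norm-$(-2/3)$ representatives you list, play no role here.

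Your closing worry about ``accidentally landing on classes of the norm-$(-4/3)$ vectors $u_j$'' conflates two different lattices: the $u_j$ live in $\Sthreeone\dual$, whereas the vector to be produced lives in $\Tiota\dual$, so no such interference can occur; the distinction between the walls $(j(u_j))\sperp$ and $(j(w_k))\sperp$ is handled separately (by the norm computation $(-4/3)\cdot 3=-4\ne-2$) and is not part of this lemma. With the norm corrected to $-4/3$ throughout, your proof coincides with the paper's, and the deduction of the second sentence from the containment $\RRR(\Sthreeten)\subset j(\RRR(\Lts,\iota))$ together with $j(h_{\FQ})\in j(D(\Xthreeone))\spcirc\cap\NE(\Xthreeten)$ is correct.
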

\begin{proof}
It is enough to show that, if $v\in \Sthreeone\dual$ satisfies 
$v^2=-2/3$, then 
there exists $u\in \Tiota\dual$ such that $u^2=-4/3$ 
and that $u+v$ is contained in $\Lts\subset \Sthreeone\dual\oplus \Tiota\dual$.
For this, it suffices  to prove that,
for any $\bar{u}\in A_{\Tiota}$ with $q_{\Tiota}(\bar{u})=-4/3$,
there exists $u\in \Tiota\dual$ such that $u^2=-4/3$ and $u\bmod \Tiota=\bar{u}$.
Since $\Tiota$ is a root lattice of type $2A_2$,
we can confirm this fact by  direct computation.
\end{proof}
The set of walls of $j(D(\Xthreeone))$ is equal to
\begin{eqnarray*}
\shortset{(j(l_i))\sperp}{i=1, \dots, 112} \;\cup\; 
\shortset{(j(u_j))\sperp}{j=1, \dots, 648} \;\cup\;  \hfill \\
\hfill  \shortset{(j(w_k))\sperp}{k=1, \dots, 5184}.
\end{eqnarray*}
Note that 
the  vectors $j(l_i)$ are of norm $-6$,
the  vectors $j(u_j)$ are  of norm $-4$,
and the  vectors $j(w_k)$ are of norm $-2$.
Note also that neither $(j(l_i))\sperp$ nor $(j(u_j))\sperp$ are contained in $\RRR(\Sthreeten)\sphyp$.
By Proposition~\ref{prop:FFFGGG}, Theorem~\ref{thm:char3sigma1} and Lemma~\ref{lem:containthree},
we obtain the following:
\begin{theorem}\label{thm:char3sigma10}
The group $\aut(\NE(\Xthreeten))$ is generated by
$\PGU(4, \F_9)\sprime$,   $\Frob\sprime$, 
$s_{l_i}\sprime$ and $\tau({h_{648}})\sprime$.
\end{theorem}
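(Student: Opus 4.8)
The plan is to apply Proposition~\ref{prop:FFFGGG} to the pair of chambers $j(D(\Xthreeone)) \subset \NE\spcirc(\Xthreeten)$, exactly mirroring the argument of Section~\ref{subsec:AutNEchar2}. The abstract setup from Section~\ref{sec:chamber} tells us that $\aut(\NE(\Xthreeten)) = \aut(\NE\spcirc(\Xthreeten))$ is the automorphism group of an $\RRR(\Sthreeten)\sphyp$-chamber, and is thus generated by the automorphism group of any fundamental sub-chamber together with those isometries crossing the walls of that sub-chamber which do not lie in $\RRR(\Sthreeten)\sphyp$. Here the sub-chamber is $j(D(\Xthreeone))$, which is an $\FFF$-chamber for $\FFF = j(\RRR(\Lts, \iota))\sphyp$; by Lemma~\ref{lem:containthree} we have $\RRR(\Sthreeten) \subset j(\RRR(\Lts,\iota))$, so taking $\GGG = \RRR(\Sthreeten)\sphyp$ we are precisely in the situation of Proposition~\ref{prop:FFFGGG} with $C = \NE\spcirc(\Xthreeten)$ and $D = j(D(\Xthreeone))$.

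First I would record that $j(D(\Xthreeone))$ is fundamental: this follows from $j_*$ being an isomorphism $\OG^+(\Sthreeone) \isom \OG^+(\Sthreeten)$ together with the fact (established in the proof of Theorem~\ref{thm:char3sigma1}) that $D(\Xthreeone)$ is fundamental, since $\PGU(4,\F_9)$, $\tau(h_{648})$, $\tau(h_{5184})$ and the reflections $s_{l_i}$ furnish isometries carrying $D(\Xthreeone)$ to each of its adjacent chambers along all three orbits of walls. Hence $\aut(D(\Xthreeone))\sprime = \aut(j(D(\Xthreeone)))$, and by Theorem~\ref{thm:char3sigma1}(2) this group is generated by $\PGU(4,\F_9)\sprime$ and $\Frob\sprime$. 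Then I would apply Proposition~\ref{prop:FFFGGG}: $\aut(\NE\spcirc(\Xthreeten))$ is generated by $\aut(j(D(\Xthreeone)))$ together with $\{g_v : (v)\sperp \notin \GGG\}$, where $g_v$ runs over isometries sending $j(D(\Xthreeone))$ to its neighbour across each wall $(v)\sperp$.

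Next I would identify which walls survive. The walls of $j(D(\Xthreeone))$ are the $112$ hyperplanes $(j(l_i))\sperp$, the $648$ hyperplanes $(j(u_j))\sperp$ and the $5184$ hyperplanes $(j(w_k))\sperp$. Applying $j_*$ multiplies norms by $3$, so $j(l_i)^2 = -6$, $j(u_j)^2 = -4$ and $j(w_k)^2 = -2$. A hyperplane $(v)\sperp$ lies in $\RRR(\Sthreeten)\sphyp = \GGG$ only if $v$ is a scalar multiple of a $(-2)$-vector of $\Sthreeten$; since there is no rational $\lambda$ with $-6\lambda^2 = -2$ or $-4\lambda^2 = -2$, none of the $(j(l_i))\sperp$ or $(j(u_j))\sperp$ belong to $\GGG$, whereas the $(j(w_k))\sperp$ do (the $w_k$ being scalar multiples of $(-2)$-vectors after multiplication by $3$, as in the Fermat quartic analysis). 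Therefore the relevant wall-crossing isometries are those across the $l_i$-walls and the $u_j$-walls: by Theorem~\ref{thm:char3sigma1}(3) these are supplied, up to the $\aut(D(\Xthreeone))$-action which is transitive on each orbit, by the reflections $s_{l_i}\sprime$ and by $\tau(h_{648})\sprime$. Using the Remark after Proposition~\ref{prop:FFFGGG} (one needs only one representative $g_v$ per $\aut(D)$-orbit of walls), we conclude that $\aut(\NE(\Xthreeten))$ is generated by $\PGU(4,\F_9)\sprime$, $\Frob\sprime$, $s_{l_i}\sprime$ and $\tau(h_{648})\sprime$.

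The only genuinely delicate point — and hence the step I expect to be the main obstacle — is verifying Lemma~\ref{lem:containthree}, i.e.\ that $\RRR(\Sthreeten) \subset j(\RRR(\Lts,\iota))$, which amounts to the discriminant-form computation that every class $\bar u \in A_{\Tiota}$ with $q_{\Tiota}(\bar u) = -4/3$ is represented by a $(-4/3)$-vector in $\Tiota\dual$ for $\Tiota$ the root lattice of type $2A_2$; but this is handled in the proof of that lemma by direct computation. Granting that, and granting the structural results of~\cite{KondoShimada} quoted in Theorem~\ref{thm:char3sigma1}, the present theorem is a formal consequence of the chamber machinery, just as Theorem~\ref{thm:char2sigma10} was in the $p=2$ case.
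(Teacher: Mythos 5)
Your proposal is correct and follows essentially the same route as the paper: it applies Proposition~\ref{prop:FFFGGG} to the pair $D=j(D(\Xthreeone))\subset C=\NE\spcirc(\Xthreeten)$, uses Lemma~\ref{lem:containthree} to place $\RRR(\Sthreeten)\sphyp$ inside $j(\RRR(\Lts,\iota))\sphyp$, and sorts the walls by the norms $-6$, $-4$, $-2$ of $j(l_i)$, $j(u_j)$, $j(w_k)$ to see that only the first two families require wall-crossing generators (the $j(w_k)$ being genuine $(-2)$-vectors of $\Sthreeten$), exactly as in the paper's treatment and its characteristic-$2$ analogue.
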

%
%
\section{Torelli theorem for supersingular $K3$ surfaces}\label{sec:Torelli}
We review  the theory of period mapping and 
Torelli theorem  for supersingular $K3$ surfaces 
in odd characteristics  by Ogus~\cite{MR563467}, \cite{MR717616}.
Throughout this section,
we assume that $p$ is odd.
\par
\medskip
We summarize results on  quadratic spaces over finite fields.
Let $\F_q$ be a finite extension of $\F_p$.
There exist exactly two isomorphism classes of non-degenerate quadratic forms
in $2\sigma$ variables
$x_1, \dots, x_{2\sigma}$ over $\F_q$.
They are represented by
\begin{eqnarray}\label{eq:fplus}
f_+&:=& x_1 x_2 + \dots +x_{2\sigma-1} x_{2\sigma}, \quand \\ \label{eq:fminus}
f_-&:=& x_1^2+c x_1 x_2 +x_2^2 + x_3 x_4 +   \dots +x_{2\sigma-1} x_{2\sigma},
\end{eqnarray}
where $c$ is an element of $\F_q$ such that $t^2+c t +1\in \F_q[t]$ is irreducible.
The quadratic form $f_+$ (resp.~$f_-$) is called
\emph{neutral} (resp.~\emph{non-neutral}).
We denote by $\OG(\F_q^{2\sigma}, f_{\epsilon})$
the group of the self-isometries of the quadratic space $(\F_q^{2\sigma}, f_{\epsilon})$.
\erase{
The  group $\OG(\F_q^{2\sigma}, f_{\epsilon})$
of the self-isometries of the quadratic space $(\F_q^{2\sigma}, f_{\epsilon})$, 
where $\epsilon=\pm 1$, 
is of order 
\begin{equation*}\label{eq:order}
2\,q^{\sigma(\sigma-1)}(q^{\sigma}-\epsilon)\prod_{i=1}^{\sigma-1}  (q^{2i}-1).
\end{equation*}
}
\par
\medskip
Let $N$ be  an even hyperbolic $p$-elementary lattice of rank $22$ with discriminant $-p^{2\sigma}$.
We define a quadratic space $(N_0, q_0)$ over $\F_p$ by~\eqref{eq:N0q0}.
It is known that $q_0$ is non-degenerate and \emph{non-neutral}.
We denote by 
$\OG(N_0, q_0)$
the group  of the self-isometries of  $(N_0, q_0)$.
Note that the scalar multiplications in $\OG(N_0, q_0)$ are only $\pm 1$.
%
Let $k$ be a   field of characteristic $p$.
We put
$$
\varphi:=\id_{N_0}\tensor F_k : N_0\tensor k \to N_0\tensor k,
$$
where $F_k$ is the Frobenius map of $k$.
\begin{definition}\label{def:characteristic}
A subspace $K$ of $N_0\tensor k$ with $\dim K=\sigma$ is said to be a~\emph{characteristic subspace}
of $(N_0, q_0)$
if $K$ is totally isotropic with respect to the quadratic form $q_0\tensor k$
and $\dim (K\cap \varphi (K))=\sigma-1$ holds.
\end{definition}
Suppose that $k$ is algebraically closed.
Let $X$ be a supersingular $K3$ surface with Artin invariant $\sigma$
defined over $k$.
An isomorphism
$$
\eta: N\isom\,  S_X
$$
of lattices 
is called a \emph{marking} of $X$.
We fix a marking $\eta$ of $X$.
The  composite of the marking $\eta$ and the Chern class map $S_X \to H^2_{\DR}(X/k)$ defines
a linear homomorphism
$$
\bar{\eta}: N\tensor k \to H^2_{\DR}(X/k).
$$
It is known that $\Ker\bar\eta$ is contained in $N_0\tensor k$,
and is totally isotropic with respect to $q_0\tensor k$.
We put 
$$
K_{(X, \eta)}:=\varphi\inv (\Ker\bar\eta),
$$
and call $K_{(X, \eta)}$ the \emph{period} of the marked supersingular $K3$ surface $(X, \eta)$.
Then it is proved by Ogus~\cite{MR563467}, \cite{MR717616} that 
$K_{(X, \eta)}$ is a characteristic subspace of $(N_0, q_0)$.
We denote by  $\eta^*: \OG(S_X) \isom \OG(N)$ 
 the isomorphism induced by the marking $\eta$,
and let
$$
\bar\eta^*: \OG(S_X) \to \OG(N_0, q_0)
$$
be the composite of $\eta^*$ with the natural homomorphism $\OG(N)\to \OG(N_0, q_0)$.
As a corollary of Torelli theorem by Ogus~\cite[Corollary of Theorem II${}^{\prime\prime}$]{MR717616},
 we have the following:
 \begin{corollary}\label{cor:finiteindex}
 Let $\eta$ be a marking of $X$.
 Then, 
 as a subgroup of $\OG^+(S_X)$,
 the automorphism group $\Aut(X)$ of $X$ is equal to
 $$
 \set{g\in \aut(\NE(X))}{ K_{(X, \eta)}^{\bar{\eta}^*(g)} = K_{(X, \eta)}}.
 $$
 In particular, the index of $\Aut(X)$ in $\aut(\NE(X))$ is at most
 $|\OG(N_0, q_0)/\{\pm 1\}|$.
 \end{corollary}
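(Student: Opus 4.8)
The plan is to deduce this statement from the Torelli theorem of Ogus, combined with the chamber-theoretic description of $\aut(\NE(X))$ developed in Section~\ref{sec:chamber}. First I would recall the form of Ogus's Torelli theorem: an isometry $g \in \OG^+(S_X)$ is induced by an automorphism of $X$ if and only if (a) $g$ preserves the ample cone (equivalently, $g \in \aut(\NE(X))$, since $\NE^\circ(X)$ is an $\RRR(S_X)^*$-chamber and automorphisms are exactly the isometries preserving this chamber together with an ample class, by Proposition~\ref{prop:nef} and the discussion following Example~\ref{example:ref}), and (b) $g$ is compatible with the crystalline/de Rham structure, which via the marking $\eta$ is encoded by the condition that $\bar\eta^*(g)$ preserve the characteristic subspace $K_{(X,\eta)} \subset N_0 \tensor k$. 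The content of Ogus's theorem (in the form of the Corollary to Theorem~II$''$ of~\cite{MR717616}) is precisely that these two conditions together are sufficient, not merely necessary.

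The main steps, in order, would be: (1) Observe that $\Aut(X) \subset \aut(\NE(X))$ by~\eqref{eq:auts}, so every $g$ coming from an automorphism automatically satisfies the ample-cone condition; it remains to track the period condition. (2) For $g \in \Aut(X)$, the induced isometry of $S_X$ acts on $H^2_{\DR}(X/k)$ compatibly with the Chern class map, hence $\bar\eta^*(g)$ fixes $\Ker\bar\eta$, and therefore fixes $K_{(X,\eta)} = \varphi^{-1}(\Ker\bar\eta)$ as well (using that $\bar\eta^*(g)$ commutes with $\varphi = \id_{N_0} \tensor F_k$, since $g$ is defined over the prime field level of $N_0$). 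This gives the inclusion $\Aut(X) \subset \{g \in \aut(\NE(X)) : K_{(X,\eta)}^{\bar\eta^*(g)} = K_{(X,\eta)}\}$. (3) Conversely, given $g \in \aut(\NE(X))$ with $\bar\eta^*(g)$ preserving $K_{(X,\eta)}$, apply Ogus's Torelli theorem directly: the preservation of the period and of the ample cone are exactly its hypotheses, so $g$ lifts to an automorphism of $X$. This establishes equality. (4) For the index bound, note that the map $g \mapsto \bar\eta^*(g)$ sends $\aut(\NE(X))$ into $\OG(N_0, q_0)$, and the stated subgroup is the preimage of the stabilizer of $K_{(X,\eta)}$; hence the index of $\Aut(X)$ in $\aut(\NE(X))$ is at most the index of that stabilizer in $\OG(N_0,q_0)$, which is at most $|\OG(N_0,q_0)|$. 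Since $\pm 1 \in \OG(N_0,q_0)$ acts trivially on the set of subspaces (scalars preserve every subspace) and $\pm 1$ lies in the image of $\aut(\NE(X))$ (e.g.\ via $\Frob$-type elements, or more simply because $-1 \notin \OG^+(S_X)$ but the relevant scalar ambiguity is accounted for), one refines the bound to $|\OG(N_0,q_0)/\{\pm 1\}|$.

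The step I expect to be the main obstacle is step (3): being careful about exactly which formulation of Ogus's Torelli theorem is being invoked and verifying that the ``ample cone'' hypothesis in Ogus's statement matches the condition $g \in \aut(\NE(X))$ used here. Ogus phrases the crystalline Torelli theorem in terms of isomorphisms of the supersingular K3 crystal that preserve the ample cone; translating this into the purely lattice-theoretic language of $\RRR(S_X)^*$-chambers requires knowing that $\aut(\NE^\circ(X)) = \OG^+(S_X)/W(S_X)$ acts simply transitively on chambers and that the chamber $\NE^\circ(X)$ is the one distinguished by containing ample classes — all of which is recalled in Section~\ref{sec:chamber}. A secondary subtlety is the commutation of $\bar\eta^*(g)$ with $\varphi$ in step (2): since $N_0 = pN^\vee/pN$ is defined over $\F_p$ and $g$ acts $\Z$-linearly on $N$, the induced action on $N_0 \tensor k$ is $\F_p$-linear and hence commutes with $\id_{N_0}\tensor F_k$, so preservation of $\Ker\bar\eta$ is equivalent to preservation of $K_{(X,\eta)}$; this is routine once spelled out. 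The rest is bookkeeping with the natural homomorphism $\OG(N) \to \OG(N_0,q_0)$ and the observation that its scalar kernel contributes the factor $\{\pm 1\}$.
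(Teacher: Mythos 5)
Your proposal takes essentially the same route as the paper, which offers no separate argument but states the corollary as a direct consequence of Ogus's crystalline Torelli theorem (\cite[Corollary of Theorem II${}^{\prime\prime}$]{MR717616}); your steps (1)--(4), including the compatibility of $\bar\eta^*(g)$ with $\varphi$ and the identification of $\Aut(X)$ with the preimage of the stabilizer of the period, are exactly the intended deduction. One small remark: for the index bound you only need that $\{\pm 1\}$ is contained in the stabilizer of $K_{(X,\eta)}$ in $\OG(N_0,q_0)$ (scalars preserve every subspace), so your side comment about $\pm 1$ lying in the image of $\aut(\NE(X))$ is unnecessary and can be dropped.
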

 Since $\aut(\NE(X))$ is infinite,
 we obtain the following:
 \begin{corollary}\label{cor:infinite}
 The automorphism group $\Aut(X)$  is infinite.
\end{corollary}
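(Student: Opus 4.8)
The plan is to derive Corollary~\ref{cor:infinite} directly from Corollary~\ref{cor:finiteindex} together with the fact, established at the end of Section~\ref{sec:chamber}, that $\aut(\NE(\Xps))$ is infinite for every supersingular $K3$ surface. The point is simply that a group containing a subgroup of finite index which is infinite must itself be infinite, but here the logic runs the other way: we have an infinite \emph{ambient} group $\aut(\NE(X))$ and a subgroup $\Aut(X)$ of \emph{finite} index inside it, and we want to conclude $\Aut(X)$ is infinite.

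First I would invoke Corollary~\ref{cor:finiteindex}, which identifies $\Aut(X)$ with an explicit subgroup of $\aut(\NE(X))$ and shows that its index is bounded by $|\OG(N_0,q_0)/\{\pm 1\}|$, a finite number since $N_0$ is a finite-dimensional quadratic space over the finite field $\F_p$. Next I would recall from Section~\ref{sec:chamber} that, because $\NE\spcirc(X)$ is an $\RRR(S_X)\sphyp$-chamber, we have $\aut(\NE(X))\cong \OG^+(S_X)/W(S_X)$, and that by Nikulin's classification of $2$-reflective lattices~\cite{MR633160} there are no $2$-reflective lattices of rank $>19$; since $S_X$ has rank $22$, the index of $W(S_X)$ in $\OG^+(S_X)$ is infinite, hence $\aut(\NE(X))$ is infinite. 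Finally, a subgroup of finite index in an infinite group is infinite: if $\Aut(X)$ were finite, then $\aut(\NE(X))$, being covered by finitely many cosets of a finite subgroup, would be finite, a contradiction. Therefore $\Aut(X)$ is infinite.

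There is essentially no obstacle here; the statement is a one-line consequence of material already in place. The only thing to be careful about is that Corollary~\ref{cor:finiteindex} is stated under the standing hypothesis of Section~\ref{sec:Torelli} that $p$ is odd, so the conclusion as phrased applies in odd characteristic; the infinitude of $\Aut(\Xps)$ in characteristic $2$ is not asserted by this corollary and would require the separate inputs described in Section~\ref{sec:autNE23}. Within the stated scope, the proof is complete after the three steps above.
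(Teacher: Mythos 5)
Your proof is correct and follows exactly the paper's route: the paper deduces Corollary~\ref{cor:infinite} from Corollary~\ref{cor:finiteindex} (finite index of $\Aut(X)$ in $\aut(\NE(X))$) together with the infinitude of $\aut(\NE(X))$ established at the end of Section~\ref{sec:chamber} via Nikulin's classification of $2$-reflective lattices. Your remark about the standing hypothesis that $p$ is odd is also accurate.
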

\erase{
 \begin{remark}\label{rem:largeorder}
  When $p=3$ and $\sigma=1$, the  group  $\OG(N_0, q_0)$ is of order $8$,
  while the index of $\Aut(\Xthreeone)$ in $\aut(\NE(\Xthreeone))$ is $2$
  by Theorem~\ref{thm:char3sigma1}.
  \end{remark}
  }
 \begin{definition}\label{def:generic}
 We say that  a supersingular $K3$ surface $X$ with Artin invariant $\sigma$
is \emph{generic}
if there exists a marking $\eta$ for $X$ such that the subgroup
\begin{equation}\label{eq:stabK}
\set{\gamma\in \OG(N_0, q_0)}{ K_{(X, \eta)}^{\gamma} = K_{(X, \eta)}}
\end{equation}
of $\OG(N_0, q_0)$ consists of only scalar multiplications $\pm 1$.
 \end{definition}
If $X$ is generic, then the subgroup~\eqref{eq:stabK} consists of only scalar multiplications 
for any marking $\eta$.
The existence of generic   supersingular $K3$ surfaces  with Artin invariant $>1$
(Theorem~\ref{thm:existenceofgeneric})
is proved in the next section.
\par\medskip
Recall that $A_{S_X}$ is the discriminant group of $S_X$,
and  $q_{S_X} : A_{S_X}\to \Q/2\Z$ is the  discriminant quadratic form.
We will regard $A_{S_X}$ as a $2\sigma$-dimensional vector space over $\F_p$.
Note that the image of $q_{S_X}$ is contained in $(2/p)\Z/2\Z$.
We define $\bar{q}_{S_X} : A_{S_X} \to \F_p$ by 
$$
\bar{q}_{S_X} (x \bmod S_X) := p \cdot q_{S_X} (x) \bmod p.
$$
Then we obtain a quadratic space $(A_{S_X}, \bar{q}_{S_X} )$ over $\F_p$.
Note that we can recover $q_{S_X}$ from $\bar{q}_{S_X}$.
We have natural homomorphisms 
\begin{equation}\label{eq:toPOG}
\OG(S_X) \to \OG(q_{S_X}) \cong  \OG (A_{S_X}, \bar{q}_{S_X})  \surj 
\POG  (A_{S_X}, \bar{q}_{S_X}):= \OG (A_{S_X}, \bar{q}_{S_X})/\{\pm 1\}.
\end{equation}
Let $\eta: N\dual\isom S_X\dual$ be the isomorphism induced by a marking $\eta$.
Then the map
$$
px \bmod pN \in N_0 \mapsto \eta (x) \bmod S_X \in A_{S_X}\qquad (x\in N\dual)
$$
induces an isomorphism of quadratic spaces from $(N_0, q_0)$ to $(A_{S_X}, \bar{q}_{S_X})$.
By Corollary~\ref{cor:finiteindex}, we obtain the following:
%
%
\begin{corollary}\label{cor:genericaut}
Suppose that $X$ is  generic.
Then $\Aut(X)$ is equal to the kernel of the homomorphism 
%
$$
\Phi: \aut(\NE(X)) \to \POG  (A_{S_X}, \bar{q}_{S_X})
$$
%
obtained by restricting~\eqref{eq:toPOG} to $\aut(\NE(X)) \subset \OG(S_X)$.
\end{corollary}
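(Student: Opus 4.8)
The plan is to deduce the statement directly from the Torelli-type description of $\Aut(X)$ in Corollary~\ref{cor:finiteindex} together with the definition of genericity; the only real point is a compatibility check among the several natural homomorphisms in play.

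First I would fix a marking $\eta\colon N\isom S_X$ witnessing the genericity of $X$, so that by Definition~\ref{def:generic} the subgroup $\set{\gamma\in\OG(N_0,q_0)}{K_{(X,\eta)}^{\gamma}=K_{(X,\eta)}}$ is exactly $\{\pm 1\}$. Recall that $\bar\eta^*\colon\OG(S_X)\to\OG(N_0,q_0)$ is, by construction, the composite of the lattice isomorphism $\eta^*\colon\OG(S_X)\isom\OG(N)$ with the natural homomorphism $\OG(N)\to\OG(q_N)\cong\OG(N_0,q_0)$. Let $g\in\aut(\NE(X))$. By Corollary~\ref{cor:finiteindex}, $g$ lies in $\Aut(X)$ if and only if $\bar\eta^*(g)$ preserves $K_{(X,\eta)}$; by the choice of $\eta$ this happens if and only if $\bar\eta^*(g)\in\{\pm 1\}$, that is, if and only if the image of $g$ under the composite
\[
\aut(\NE(X))\hookrightarrow\OG(S_X)\longrightarrow\OG(N_0,q_0)\longrightarrow\OG(N_0,q_0)/\{\pm 1\},
\]
whose middle arrow is $\bar\eta^*$, is trivial.

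It remains to identify this composite with $\Phi$. Here I would use the isomorphism of quadratic spaces $(N_0,q_0)\isom(A_{S_X},\bar{q}_{S_X})$ induced by the marking, recalled just before the statement: it sends $px\bmod pN$ to $\eta(x)\bmod S_X$ for $x\in N\dual$. A short diagram chase — in which every arrow is built from $\eta$ and its $\Q$-linear extension — shows that this isomorphism intertwines the natural homomorphism $\OG(N)\to\OG(q_N)\cong\OG(N_0,q_0)$, precomposed with $\eta^*$, with the homomorphism $\OG(S_X)\to\OG(q_{S_X})\cong\OG(A_{S_X},\bar{q}_{S_X})$ appearing in~\eqref{eq:toPOG}. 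Passing to the quotients by $\{\pm 1\}$, it identifies $\OG(N_0,q_0)/\{\pm 1\}$ with $\POG(A_{S_X},\bar{q}_{S_X})$ in such a way that the displayed composite becomes exactly $\Phi$ restricted to $\aut(\NE(X))$. Combining the two steps yields $g\in\Aut(X)\iff\Phi(g)=1$, i.e.\ $\Aut(X)=\Ker\Phi$.

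The one step needing care is the diagram chase of the last paragraph, namely checking that the marking intertwines the various ``reduction modulo the lattice'' homomorphisms; since all of these are defined directly from $\eta$, this is routine and I expect no real obstacle. The genuinely substantive inputs are Corollary~\ref{cor:finiteindex} (hence Ogus's Torelli theorem) and the hypothesis that $X$ is generic.
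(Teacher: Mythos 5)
Your argument is correct and is exactly the one the paper intends: the paper derives this corollary directly from Corollary~\ref{cor:finiteindex} together with the marking-induced isomorphism $(N_0,q_0)\cong(A_{S_X},\bar q_{S_X})$ stated just before it, and your proof simply makes explicit the routine compatibility check that the paper leaves implicit. No issues.
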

\begin{remark}\label{rem:heavy}
Suppose that $\Xthreeten$ is generic.
By the standard algorithm of combinatorial group theory, 
we can obtain a finite set of generators of $\Aut(\Xthreeten)$
from the generators of $\aut(\NE(\Xthreeten))$ given in Theorem~\ref{thm:char3sigma10}.
However, a naive application of the algorithm
would be inexecutable,
because, 
when $p=3$ and $\sigma=10$, the order of $\OG(N_0, q_0)$ is 
$$
   2^{36} \cdot   3 ^{90} \cdot   5^{6} \cdot   7 ^{3} \cdot   11^{2} \cdot 
  13^{3} \cdot 17  \cdot 19 
 \cdot 37  \cdot   41^{2} \cdot 61 \cdot  
   73\cdot     193\cdot     547\cdot   
  757\cdot     1093\cdot     1181 ,
  $$
which is about $7.886 \times 10^{90}$.
In fact, 
we can  show that
there exists a genus one fibration  $\phi\sprime$ on $\Xthreeten$
whose lattice equivalence class contains at least $6531840$ $\Aut$-equivalence classes, 
by giving the class $f_{\phi}$ of  a Jacobian fibration on $\Xthreeone=X_{\FQ}$ explicitly. 
\end{remark}
%
%
%
%
%
%
 \section{Existence of generic supersingular $K3$ surfaces}\label{sec:existenceofgeneric}
 We prove Theorem~\ref{thm:existenceofgeneric}.
 For the proof,
 we recall the construction by  Ogus~\cite{MR563467}
 of the  scheme $\MMM$ parameterizing characteristic subspaces of 
 the $2\sigma$-dimensional quadratic space $(N_0, q_0)$ over $\F_p$. 
This scheme $\MMM$  plays the role of the period domain for supersingular $K3$ surfaces.
We continue to assume that $p$ is odd.
 \par
 \medskip
 Let  $\Grass(\nu, N_0)$ denote the Grassmannian variety of $\nu$-dimensional subspaces of $N_0$,
 and let $\Isot(\nu, q_0)$ be the subscheme of $\Grass(\nu, N_0)$ parameterizing $\nu$-dimensional 
 totally isotropic subspaces of $(N_0, q_0)$.
 We put
$$
\Gen:=\Isot(\sigma, q_0), 
$$
where $\Gen$ is for ``generatrix".
Note that  $\Isot(\nu, q_0)$ is defined over $\F_p$ for any $\nu$.
Let $k$ be a field of characteristic $p$.
For a subspace $L$ of $N_0\tensor k$ with dimension $\nu$,
 we denote by $[L]$ the $k$-valued point of $\Grass(\nu, N_0)$ corresponding to $L$.
We then have the following:
\begin{enumerate}
\item If $\nu<\sigma$, then  $\Isot(\nu, q_0)$ is geometrically connected.
\item The scheme $\Gen\tensor \F_{p^2}$ has two connected components $\Gen_+$ and $\Gen_-$,
each of which is geometrically connected.
Since $q_0$ is non-neutral,
the action of $\Gal(\F_{p^2}/\F_{p})$ interchanges the two connected components.
\item Let $K$ and $K\sprime$ be two $\sigma$-dimensional 
 totally isotropic subspaces of $(N_0, q_0)\tensor k$.
 Suppose that  $\dim (K\cap K\sprime)=\sigma-1$.
 Then the $k$-valued points $[K]$ and $[K\sprime]$
 belong to distinct connected components of $\Gen$.
 \item 
 Suppose that 
 $k$ is algebraically closed. 
 Then, for each $k$-valued point $[L]$ of the scheme $\Isot(\sigma-1, q_0)$,
  there exist exactly two $\sigma$-dimensional 
 totally isotropic subspaces  of $(N_0, q_0)\tensor k$
 that contain $L$.
\item
Let $P$ be the subscheme of $\Gen\times \Gen$ parameterizing 
pairs $(K, K\sprime)$ such that $\dim (K\cap K\sprime)=\sigma-1$.
Then the scheme $P\tensor\F_{p^2}$ has two connected components,
each of which is isomorphic to   $\Isot(\sigma-1, q_0)$ over $\F_{p^2}$.
The action of $\Gal(\F_{p^2}/\F_{p})$ interchanges the two connected components.
\end{enumerate}
Consider the graph 
$$
\id \times \varphi: \Gen\to \Gen\times \Gen
$$
of the  Frobenius morphism $\Gen\to \Gen$ given by $K\mapsto\varphi(K)$.
The subscheme $\MMM$ of $\Gen$ that parametrizes the characteristic subspaces of $(N_0, q_0)$
is defined by the fiber product 
$$
\begin{array}{cccc}
\MMM && \inj & \Gen \\
\downarrow && \hbox{\small $\fiberproduct$} & \downarrow \rlap{\small$\id \times \varphi$} \\
P &&\inj & \Gen\times\Gen.
\end{array}
$$
Ogus~\cite{MR563467} proved the following:
\begin{theorem}
The scheme $\MMM$ defined over $\F_p$ is  smooth and projective  of dimension $\sigma-1$.
The scheme  $\MMM\tensor \F_{p^2}$ has two connected components $\MMM_+=\MMM\cap \Gen_+$ and $\MMM_-=\MMM\cap \Gen_-$,
each of which is geometrically connected.
The action of $\Gal(\F_{p^2}/\F_{p})$ interchanges $\MMM_+$ and $\MMM_-$.
\end{theorem}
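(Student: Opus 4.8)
The plan is to read off every assertion about $\MMM$ from its fibre‑product presentation $\MMM=\Gen\times_{\Gen\times\Gen}P$, treating the facts (1)--(5) recalled above as black boxes.

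\emph{Projectivity.} A priori $\MMM$ is only locally closed in $\Gen$, since $P$ is locally closed in $\Gen\times\Gen$. To upgrade this, note that the locus $\{\,[K]\in\Gen : \dim(K\cap\varphi(K))\ge\sigma-1\,\}$ is closed, and that an equality $\dim(K\cap\varphi(K))=\sigma$ would force $K=\varphi(K)$, i.e.\ $K$ would be a $\varphi$‑fixed $\sigma$‑dimensional totally isotropic subspace and hence (by Frobenius descent) would descend to an $\F_p$‑rational one. Since $q_0$ is non‑neutral its Witt index is only $\sigma-1$, so no such subspace exists; therefore $\MMM$ coincides with the closed subscheme $\{\dim(K\cap\varphi(K))\ge\sigma-1\}$ of $\Gen$. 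As $\Gen$ is a closed subscheme of a Grassmannian over $\F_p$ it is projective, and hence so is $\MMM$.

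\emph{Smoothness and dimension.} I would realise $\MMM$ as the scheme‑theoretic intersection, inside $\Gen\times_{\F_p}\Gen$, of the graph $\Gamma$ of the relative Frobenius $\varphi\colon\Gen\to\Gen$ with $P$. Here $\Gamma\isom\Gen$ is smooth, and $P$ is smooth because, by (5) together with (3) and (4), an open neighbourhood of the relevant part of $P$ is, after $\tensor\F_{p^2}$, an open subscheme of the smooth homogeneous variety $\Isot(\sigma-1,q_0)$ via $(K_1,K_2)\mapsto K_1\cap K_2$. The crucial input is that $\varphi$ is a Frobenius, so $d\varphi=0$ and the tangent space of $\Gamma$ at $(K,\varphi(K))$ is $T_{[K]}\Gen\oplus 0$; thus $\Gamma$ meets $P$ transversally at a point $[K]\in\MMM$ precisely when the projection $T_{(K,\varphi(K))}P\to T_{[\varphi(K)]}\Gen$ is surjective. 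Under the identification above, this projection is the differential at $[L]$, $L:=K\cap\varphi(K)$, of the map sending an isotropic $(\sigma-1)$‑plane to its unique maximal isotropic overspace lying in $\Gen_-$; that map is smooth because by (4) the incidence variety $\{\,L\subset K : K\in\Gen_-\,\}$ is a $\P^{\sigma-1}$‑bundle over $\Gen_-$ which maps isomorphically onto the pertinent open set of $\Isot(\sigma-1,q_0)$. Transversality then yields that $\MMM$ is smooth of dimension $\dim\Gamma+\dim P-\dim(\Gen\times\Gen)=\dim\Isot(\sigma-1,q_0)-\dim\Gen=\bigl(\binom{\sigma+1}{2}-1\bigr)-\binom{\sigma}{2}=\sigma-1$. (Alternatively one computes $T_{[K]}\MMM$ directly from the isotropic‑Grassmannian tangent space $\{\phi\colon W\to N_0/W : \bar\phi\in\wedge^2 W\dual\}$, again using $d\varphi=0$, and finds it to be $\mathrm{Hom}\bigl(K,(L+\varphi(L))\sperp/L\bigr)$ with one‑dimensional target, of dimension $\sigma-1$ at every point.)

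\emph{The two components and the Galois action.} The decomposition $\Gen\tensor\F_{p^2}=\Gen_+\sqcup\Gen_-$ of (2) into open‑and‑closed pieces induces $\MMM\tensor\F_{p^2}=\MMM_+\sqcup\MMM_-$ with $\MMM_\pm=\MMM\cap\Gen_\pm$, and since $\MMM$ is defined over $\F_p$ the group $\Gal(\F_{p^2}/\F_p)$ permutes the connected components of $\MMM\tensor\F_{p^2}$; because it interchanges $\Gen_+$ and $\Gen_-$ by (2), it interchanges $\MMM_+$ and $\MMM_-$, and in particular neither is empty (there is a $\overline{\F}_p$‑point of $\MMM$, e.g.\ the period of a supersingular $K3$ surface with Artin invariant $\sigma$, or simply because $\Isot(\sigma-1,q_0)\ne\emptyset$). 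What remains — and what I expect to be the main obstacle — is that each $\MMM_\pm$ is geometrically connected; by the Galois symmetry it suffices to treat $\MMM_+$. One natural route is the morphism $\mu\colon\MMM\to\Isot(\sigma-1,q_0)$, $[K]\mapsto[K\cap\varphi(K)]$, which by (3)--(4) is universally injective and (being proper and quasi‑finite, hence finite) a universal homeomorphism onto its image $Z=\{[L]:L\sperp\varphi(L)\}$; this would reduce the claim to showing that the degeneracy locus $Z\subset\Isot(\sigma-1,q_0)$ has exactly two irreducible components over $\overline{\F}_p$. Since $\MMM$ is \emph{not} homogeneous — only the finite group $\OG(N_0,q_0)$ acts on it, no positive‑dimensional group being available because $\varphi$ is built into its definition — I expect this last point to require a hands‑on argument: either exhibiting a dense affine cell $\cong\mathbf{A}^{\sigma-1}$ in $\MMM_+$ and establishing irreducibility, or an induction on $\sigma$ built on the canonical filtration $K\supseteq K\cap\varphi(K)\supseteq K\cap\varphi(K)\cap\varphi^2(K)\supseteq\cdots$, in the spirit of the proofs of (1) and (2).
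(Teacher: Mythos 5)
The paper offers no proof of this statement --- it is quoted from Ogus~\cite{MR563467} --- so there is no internal argument to measure yours against; I judge the attempt on its own. Most of it is sound and isolates the right mechanisms. The projectivity argument is correct: a point with $\dim(K\cap\varphi(K))=\sigma$ would give a $\varphi$-stable maximal isotropic subspace, hence an $\F_p$-rational one, contradicting the non-neutrality of $q_0$ (Witt index $\sigma-1$), so $\MMM$ coincides with the closed locus $\{\dim(K\cap\varphi(K))\ge\sigma-1\}$. The transversality argument is also correct and is the heart of the matter: since $d\varphi=0$, the tangent space of the graph is $T_{[K]}\Gen\oplus 0$, and transversality with $P$ reduces to surjectivity of $T P\to T\Gen$ under the second projection, which holds because (after the identification of (5)) that projection is the $\P^{\sigma-1}$-bundle $\Isot(\sigma-1,q_0)\to\Gen_{\mp}$, $L\mapsto(\text{unique maximal isotropic overspace of }L\text{ in }\Gen_{\mp})$, furnished by (3)--(4). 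The dimension count $\bigl(\binom{\sigma+1}{2}-1\bigr)-\binom{\sigma}{2}=\sigma-1$ checks out, as does the treatment of the Galois action. (Two small cautions: for non-emptiness you should not invoke periods of supersingular $K3$ surfaces, since in this paper the theorem is an input to the period-theoretic arguments --- the explicit subspaces $K_e$ constructed in the proof of Theorem~\ref{thm:existenceofgeneric} do the job directly; and the map $[K]\mapsto[K\cap\varphi(K)]$ is not universally injective on all of $\MMM$, being $2{:}1$ over the finitely many $\F_p$-rational isotropic $(\sigma-1)$-planes $L$ with $\varphi(L)=L$, though it is injective on each of $\MMM_\pm$ separately.)

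The genuine gap is the one you flag yourself: the geometric connectedness of $\MMM_+$ and $\MMM_-$. What you have established only says that $\MMM\tensor\overline{\F}_p$ is smooth projective of pure dimension $\sigma-1$ with its connected components distributed between $\Gen_+$ and $\Gen_-$ and permuted in pairs by $\Gal(\F_{p^2}/\F_p)$; nothing excludes $\MMM_+$ from having many components. This is not a cosmetic omission for the present paper: irreducibility of $\MMM_+$ is exactly what makes ``the geometric generic point of $\MMM_+$'' well defined and what forces its stabilizer $G_\kappa$ to preserve \emph{every} $K$ parametrized by $\MMM_+$ in the proof of Theorem~\ref{thm:existenceofgeneric}. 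Your proposed reduction to the degeneracy locus $Z\subset\Isot(\sigma-1,q_0)$ merely relocates the difficulty (counting the components of $Z$ is no easier), so a complete proof must actually carry out one of the hands-on arguments you gesture at --- for instance the induction on $\sigma$ via the filtration $K\supset K\cap\varphi(K)\supset\cdots$, which relates characteristic subspaces of $(N_0,q_0)$ to those of the $(2\sigma-2)$-dimensional quadratic space $L^{\perp}/L$. Until that is done, the second sentence of the theorem remains unproved.
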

 \begin{proof}[Proof of Theorem~\ref{thm:existenceofgeneric}]
 Let $\kappa$ be an algebraic closure of the function field of 
 the scheme $\MMM_+$ over $\F_{p^2}$,
 and let $[K_{\kappa}]$ be the geometric generic point of $\MMM_+$. 
 By the surjectivity of the period mapping  for supersingular $K3$ surfaces
 (Ogus~\cite[Theorem III${}^{\prime\prime}$]{MR717616}), 
 there exist a supersingular $K3$ surface $X$ of Artin invariant $\sigma$ defined over $\kappa$
 and a marking $\eta: N\isom S_X$ such that $K_{(X, \eta)}=K_{\kappa}$.
 We prove that this $X$ is generic,
 that is, 
 $$
 G_{\kappa}:=\set{\gamma\in \OG(N_0, q_0)}{ K_{\kappa}^{\gamma} = K_{\kappa}}
 $$
 is equal to $\{\pm 1\}$.
 Note that
 the closure of the point $[K_{\kappa}]$ coincides with   $\MMM_+$. 
 Therefore we have the following:
 \textrm{If a field $k$ contains $\F_{p^2}$, then the action of $G_{\kappa}$ leaves  $K$  invariant
for any $k$-valued point $[K]$ of $\MMM_+$}.

 \par
 \medskip
 Suppose that $\sigma\ge 3$.
 Let $u$ be an arbitrary  non-zero isotropic vector of $N_0$.
 We will prove that $u$ is an eigenvector of $G_{\kappa}$.
 Let 
 $$
 b_0: N_0\times N_0\to \F_p
 $$
  denote the symmetric bilinear form obtained from $q_0$.
 There exists a vector $v\in N_0$ such that $q_0(v)=0$ and $b_0(u, v)=1$,
 and hence $(N_0, q_0)$ has an orthogonal direct-sum decomposition
 $$
 N_0=U\sperp \oplus U,
 $$
 where $U$ is the subspace spanned by $u$ and $v$.
 Repeating this procedure and noting that $q_0$ is non-neutral, 
we obtain  a basis $a_1, \dots, a_{2\sigma}$ of $N_0$
 with $u=a_{2\sigma}$ such that
 $q_0(x_1a_1+\cdots+x_{2\sigma}a_{2\sigma})$ is equal to 
 the quadratic  polynomial $f_-$ in~\eqref{eq:fminus}.
 Let $\alpha$ and $\bar{\alpha}=\alpha^p$ be the roots in $\F_{p^2}$ of 
 the irreducible polynomial $t^2+c t +1 \in \F_p[t]$.
 We consider the basis
\begin{equation}\label{eq:basisbs}
 \renewcommand{\arraystretch}{1.2}
\begin{array}{ll}
 b_1^{(-1)}:=\alpha a_1+a_2, &
 b_1^{(1)}:=\bar \alpha a_1+a_2, \quand \\
 b_i^{(-1)}:=a_{2i-1}, &
 b_i^{(1)}:=a_{2i}\qquad
  (i=2, \dots, \sigma)
 \end{array}
\end{equation}
of $N_0\tensor\F_{p^2}$.
Note that each $b_{i}^{(\pm 1)}$ is
isotropic, and that
$$
b_0(b_i^{(\alpha)}, b_j^{(\beta)})=0\;\;\;\textrm{if $i\ne j$},
\qquad
b_0(b_i^{(1)}, b_i^{(-1)})=\begin{cases}
(4-c^2)/2  &\textrm{if $i=1$}, \\
1/2 & \textrm{if $i\ge 2$}.
\end{cases}
$$
We put
$$
\EEE:=\{1, -1\}^{\sigma}.
$$
For $e=(\varepsilon_1,\dots, \varepsilon_\sigma)\in \EEE$,
we denote by $K_e$ the linear subspace of $N_0\tensor\F_{p^2}$ spanned by
$$
b_1^{(\varepsilon_1)},\dots, b_\sigma^{(\varepsilon_\sigma)}.
$$ 
It is obvious that $K_e$ is isotropic.
Moreover, since
$$
\varphi(b_1^{(\varepsilon)})=b_1^{(-\varepsilon)}
\quand
\varphi(b_i^{(\varepsilon)})=b_i^{(\varepsilon)}\;\;\textrm{if $i\ge 2$}, 
$$
 we have $\dim (K_e\cap \varphi(K_e))=\sigma-1$.
 Therefore $K_e$ is a characteristic subspace of $(N_0, q_0)$.
 Suppose that $e$ and $e\sprime\in \EEE$ differ only at one component.
 Then we have $\dim (K_e\cap K_{e\sprime})=\sigma-1$,
 and hence the $\F_{p^2}$-valued points $[K_e]$ and $[K_{e\sprime}]$ of $\MMM$ belong to
 distinct connected components.
 We put
 $$
 \EEE_+:=\set{e\in \EEE}{\textrm{the number of $-1$ in $e$ is even}},
 \quad
 \mathbf{1}:=(1,\dots, 1)\in \EEE_+.
 $$
 Interchanging $\alpha$ and $\bar\alpha$ if necessary,
 we can assume that $[K_{\mathbf{1}}]$ is an $\F_{p^2}$-valued point of $\MMM_+$,
 and hence $[K_e]$ is an  $\F_{p^2}$-valued point of $\MMM_+$ for any $e\in \EEE_+$.
It follows that  $K_e$ is invariant under the action of $G_{\kappa}$ for any $e\in \EEE_+$.
Let $b_{i}^{(\alpha)}$ be an arbitrary element among the basis~\eqref{eq:basisbs}.
Recall that we have assumed  $\sigma\ge 3$.
Therefore, for each element $b_{j}^{(\beta)}$ among the basis~\eqref{eq:basisbs}
that is distinct from $b_{i}^{(\alpha)}$, 
there exists $e(j, \beta)=(\varepsilon_1,\dots, \varepsilon_\sigma)\in \EEE_+$ such that
$\varepsilon_i=\alpha$ and $\varepsilon_j\ne \beta$.
Since
$$
\bigcap_{(j, \beta)\ne (i, \alpha)} K_{e(j, \beta)}=\gen{b_{i}^{(\alpha)}}
$$
is invariant under the action of $G_{\kappa}$,
we see that $b_{i}^{(\alpha)}$ is an  eigenvector of  $G_{\kappa}$.
In particular,
the  isotropic vector $u=a_{2\sigma}=b_{\sigma}^{(1)}$  given at the beginning 
is  an eigenvector of  $G_{\kappa}$.
\par
Let 
$$
\lambda_{i}^{(\alpha)}: G_{\kappa}\to \F_{p^2}\sptimes
$$
 be the character defined by  $b_{i}^{(\alpha)}$.
%
Suppose that $i, j\ge 2$ and $i\ne j$.
Then $b_{i}^{(\alpha)}+b_{j}^{(\beta)}$ is an isotropic vector of $N_0$
for any choice of $\alpha, \beta\in \{\pm 1\}$,
and hence is an eigenvector of $G_{\kappa}$.
Therefore we have 
\begin{equation}\label{eq:lambda2}
\lambda_{i}^{(\alpha)}=\lambda_{j}^{(\beta)}\qquad\textrm{if $i, j\ge 2$ and $i\ne j$.}
\end{equation}
%
Since the cardinality of $\shortset{x^2}{x\in \F_p}$ is $(p+1)/2$, 
there exist $\xi, \eta\in \F_p$ such that
$$
(4-c^2)+\xi^2+\eta^2=0.
$$
Then 
$$
b_{1}^{(1)}+b_{1}^{(-1)}+\xi (b_{2}^{(1)}+b_{2}^{(-1)})+\eta (b_{3}^{(1)}+b_{3}^{(-1)})
$$ 
is also an isotropic vector of $N_0$, and hence is  an eigenvector of $G_{\kappa}$.
Therefore we have 
\begin{equation}\label{eq:lambda3}
\lambda_{1}^{(1)}=\lambda_{1}^{(-1)}=\lambda_{2}^{(1)}=\lambda_{2}^{(-1)}
\quad
\textrm{or}
\quad
\lambda_{1}^{(1)}=\lambda_{1}^{(-1)}=\lambda_{3}^{(1)}=\lambda_{3}^{(-1)}.
\end{equation}
Combining~\eqref{eq:lambda2} and~\eqref{eq:lambda3},
we see that all the characters $\lambda_{i}^{(\alpha)}$ are equal to each other.
Thus $G_{\kappa}$ consists of only scalar multiplications.
\par
\medskip
Suppose that $\sigma=2$.
In this case,
the scheme $\MMM$ coincides with  $\Isot(2, q_0)$,
which is the scheme parametrizing lines on 
the smooth quadratic surface $Q_0=\{q_0=0\}$ in 
the projective space $\P_*N_0=\Grass(1, N_0)$.
Hence  $\MMM_+$ and $\MMM_-$ 
correspond to
the two rulings of $Q_0$.
Let $g$ be an element of $G_{\kappa}$.
Then $g$ leaves every line in the ruling of $Q_0$
corresponding to $\MMM_+$ invariant.
Since $g$ is defined over $\F_p$ and $\Gal(\F_{p^2}/\F_{p})$ interchanges $\MMM_+$ and $\MMM_-$,
we see that $g$ also leaves every line in the other ruling of $Q_0$ invariant.
Therefore $g$ fixes every point of $Q_0$,
and hence every point of $\P_*N_0$.
\end{proof}
\bibliographystyle{plain}
\def\cftil#1{\ifmmode\setbox7\hbox{$\accent"5E#1$}\else
  \setbox7\hbox{\accent"5E#1}\penalty 10000\relax\fi\raise 1\ht7
  \hbox{\lower1.15ex\hbox to 1\wd7{\hss\accent"7E\hss}}\penalty 10000
  \hskip-1\wd7\penalty 10000\box7} \def\cprime{$'$} \def\cprime{$'$}
  \def\cprime{$'$} \def\cprime{$'$}

\end{document}